\definecolor{mycolor1}{rgb}{0.00000,0.44700,0.74100}
\definecolor{mycolor2}{rgb}{0.8500, 0.3250, 0.0980}
\definecolor{mycolor3}{rgb}{0.9290, 0.6940, 0.1250}
\definecolor{mycolor4}{rgb}{0.4940, 0.1840, 0.5560}
\definecolor{mycolor5}{rgb}{0.4660, 0.6740, 0.1880}
\newtheorem{definition}{Definition}[section]
\newtheorem{proposition}[definition]{Proposition}
\newtheorem{corollary}[definition]{Corollary}
\newtheorem{theorem}[definition]{Theorem}
\newtheorem{lemma}[definition]{Lemma}
\newtheorem*{theorem*}{Theorem}
\theoremstyle{remark}
\newtheorem{remark}[definition]{Remark}
\newenvironment{example}
  {\pushQED{\qed}\examplex}
  {\popQED\endexamplex}
\numberwithin{equation}{section}
\newcommand{\R}{\mathbb{R}}
\newcommand{\C}{\mathbb{C}}
\newcommand{\N}{\mathbb{N}}
\newcommand{\PP}{\mathbb{P}}
\newcommand{\cV}{\mathcal{V}}
\newcommand{\Hom}{\operatorname{Hom}}
\newcommand{\wt}{\widetilde}
\DeclareMathOperator{\gr}{Gr}
\DeclareMathOperator{\ch}{CH}
\DeclareMathOperator{\Div}{Div}
\newcommand{\aA}{\mathcal{A}}
\newcommand{\CH}{\operatorname{CH}}
\newcommand{\Sing}{\operatorname{Sing}}
\newcommand{\Sec}{\mathcal{S}_{01}}
\newcommand{\secV}{\operatorname{Sec}}
\newcommand{\open}{\operatorname{int}}
\newcommand{\sO}{\mathcal{O}}
\newcommand{\grd}{\operatorname{Gr}(d{-}1,d{+}1)}
\title{Taking the amplituhedron to the limit}
\author{Joris Koefler, Rainer Sinn}
\date{December 2024}
\begin{document}

\maketitle
\begin{abstract}
    The amplituhedron is a semialgebraic set given as the image of the non-negative Grassmannian under a linear map subject to a choice of additional parameters. 
    We define the limit amplituhedron as the limit of amplituhedra by sending one of the parameters, namely the number of particles $n$, to infinity. 
    We study this limit amplituhedron for $m=2$ and any $k$, relating to the number of negative helicity particles.
    We determine its algebraic boundary in terms of Chow hypersurfaces. These hypersurfaces in the Grassmannian are stratified by singularities in terms of higher order secants of the rational normal curve.
    In conclusion, we show that the limit amplituhedron is a positive geometry with a residual arrangement that is empty.
\end{abstract}

\section{Introduction}\label{sec:intro}

Amplituhedra were introduced by Arkani-Hamed and Trnka \cite{Amplituhedron} in the context of scattering amplitudes in particle physics. We can view them as a generalization of polytopes to a nonlinear setting as follows. Every polyhedral cone is the image of a non-negative orthant under a linear map (sometimes called $\mathcal{V}$-representation). 
Analogously, a (tree-level) amplituhedron is defined as the image of a non-negative Grassmannian under a linear map; the non-negative orthant $\R^d_{\geq 0}$ is the special case of the non-negative Grassmannnian in $\gr(1,d) \cong \PP^{d-1}$. 
One major open question in this area asks if amplituhedra are positive geometries in the sense of \cite{canForms_Lam_Hamed}. The answer is known to be affirmative for polytopes, see \cite{lam2022invitationpositivegeometries} for more context. 
In \cite{ranestad2024adjoints}, the authors show that the amplituhedron is a positive geometry in the special case of $k=m=2$, which is a toy model for the $m=4$ amplituhedron which encodes the scattering amplitudes of $n$ particles, $k+2$ of them with negative helicity, in $\mathcal{N}=4$ SYM.
In our work, we consider any $k$ and take that result to the limit, as the number of particles $n$ goes to infinity. 
We define the limit amplituhedron, which is a semialgebraic subset of the Grassmannian $\gr(k,k{+}2)$, and show that it is a positive geometry. 
A simple case of this construction is the mysterious \enquote{pizza slice}, see Example~\ref{ex:pizza_slice}.
It is also special version of the master amplituhedron that already appeared in \cite{Amplituhedron}.

The definition of a positive geometry is recursive in dimension, see \cite{lam2022invitationpositivegeometries}. In the case of polytopes, this means that we successively take residues of the canonical differential form along the flats of the facet hyperplane arrangement. 
In the end, we need to arrive at the form $\pm 1$ at the vertices of the polytope and to show that there are no other poles.
In the nonlinear setting here, we need to replace the facet hyperplane arrangement of a polytope by the algebraic boundary of the limit amplituhedron.
Its algebraic boundary is a hypersurface in the Grassmannian and the flats in the linear case correspond to iterated singular loci. The algebraic boundary of the limit amplituhedron is the union of two Chow hypersurfaces in the Grassmannian, namely the Chow hypersurface of the rational normal curve and a secant line.
We describe the iterated singular loci geometrically in terms of varieties of secant planes and osculating planes to the rational normal curve. 

Our paper is structured along these lines. In Section~\ref{sec:algebraic_bd} we introduce the limit amplituhedron formally, establish some basic properties, and determine its algebraic boundary in terms of the Chow hypersurfaces of the rational normal curve and a line in $\PP^{d}$. In Section~\ref{sec:stratification} we describe all strata of the iterated singular loci of the algebraic boundary, generalizing the matroid of the facet hyperplane arrangement of a polytope to our setting. Finally, in Section~\ref{sec:residual_arrangement}, we show that the residual arrangement of the limit amplituhedron is empty. 
In particular, we prove that all $0$-dimensional strata are vertices of the limit amplituhedron, such that the canonical form given in terms of the adjoint hypersurface of the algebraic boundary satisfies the requirements for a positive geometry.

In summary, our main result states the following.
\begin{theorem}\label{thm:main_pos_geom}
    The limit amplituhedron $\aA_k^\infty$ is a positive geometry  $(\gr(k,k{+}2),\aA^\infty_k)$ in the Grassmannian $\gr(k,k+2)$. Its algebraic boundary has two irreducible components, the Chow hypersurface of the rational normal curve $C_{k+1}\subset \PP^{k+1}$ and the Chow hypersurface of the secant line $\Sec\subset \PP^{k+1}$ spanned by two points on $C_{k+1}$.
\end{theorem}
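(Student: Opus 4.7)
The proof assembles the three sections of the paper into the recursive definition of a positive geometry. The plan is to first fix the algebraic boundary, then produce a candidate canonical form using the adjoint of that boundary, and finally verify the residue conditions by a downward induction on the dimension of the strata cut out by the iterated singular loci.

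First I would invoke the result of Section~\ref{sec:algebraic_bd}, which identifies the Zariski closure of the topological boundary of $\aA_k^\infty$ inside $\gr(k,k{+}2)$ as the union of two Chow hypersurfaces: the Chow form of the rational normal curve $C_{k+1}\subset \PP^{k+1}$ cutting out those $(k{-}1)$-planes that meet $C_{k+1}$, and the Chow form of the secant line $\Sec$ spanned by the two distinguished points on $C_{k+1}$. Call these hypersurfaces $Z_C$ and $Z_{\Sec}$ and set $Z=Z_C\cup Z_{\Sec}$. This both gives the second assertion of the theorem and identifies the candidate polar locus of the canonical form.

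Next I would produce a candidate canonical form $\Omega(\gr(k,k{+}2),\aA_k^\infty)$ as the rational top form whose numerator is (a suitable scalar multiple of) the adjoint hypersurface of $Z$ in the sense of \cite{ranestad2024adjoints} and whose denominator is the product of defining equations of $Z_C$ and $Z_{\Sec}$. Since $\aA_k^\infty$ is full-dimensional in $\gr(k,k{+}2)$ and its algebraic boundary is the hypersurface $Z$, this is the natural analog of the polytopal construction where the adjoint passes through the residual arrangement. To verify that it is a canonical form, I would check recursively that its iterated residues along the singular strata of $Z$ are canonical forms of lower-dimensional positive geometries.

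The heart of the argument is the recursive step, and here I would use Section~\ref{sec:stratification} and Section~\ref{sec:residual_arrangement}. The stratification describes every iterated singular locus of $Z$ as a variety of $(k{-}1)$-planes meeting prescribed secant or osculating planes of $C_{k+1}$, together with the analogous strata on $Z_{\Sec}$; on each such stratum the form $\Omega$ restricts, by an explicit residue computation in local Plücker coordinates adapted to the relevant secant/osculating flag, to a form of the same shape for a lower-dimensional limit amplituhedron. The base of the recursion is the $0$-dimensional strata, and this is where Section~\ref{sec:residual_arrangement} is decisive: the residual arrangement is shown to be empty, so every $0$-dimensional stratum is a genuine vertex of $\aA_k^\infty$ and the iterated residue there is $\pm 1$ with no spurious poles. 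Combining these, $\Omega$ satisfies the recursive definition of \cite{canForms_Lam_Hamed,lam2022invitationpositivegeometries} and the pair $(\gr(k,k{+}2),\aA_k^\infty)$ is a positive geometry.

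The main obstacle I anticipate is the bookkeeping in the recursive step: one must identify each residue not merely as some rational form, but precisely as the canonical form of a lower-dimensional positive geometry belonging to the same family, so that the induction closes. Controlling this requires the geometric description of the strata in terms of secant and osculating planes of $C_{k+1}$ from Section~\ref{sec:stratification}, which effectively replaces the face lattice of a polytope, together with the emptiness of the residual arrangement from Section~\ref{sec:residual_arrangement} to guarantee that no additional poles are introduced by the adjoint.
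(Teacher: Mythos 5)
Your three-ingredient skeleton (algebraic boundary from Section~\ref{sec:algebraic_bd}, stratification from Section~\ref{sec:stratification}, empty residual arrangement from Section~\ref{sec:residual_arrangement}) matches the paper, but the final assembly is genuinely different. You propose to certify the positive geometry by running the full recursion: compute the residue of the candidate form $\Omega$ along each boundary stratum, recognize each residue as a canonical form of a lower-dimensional positive geometry, and descend until you hit $\pm 1$ at the vertices. The paper avoids this. It first pins down $\Omega$ completely: the denominator is forced by Theorem~\ref{thm:algebraic_boundary_of_A_infty} (simple poles along $\operatorname{CH}(C_{k+1})$ and $\operatorname{CH}(\Sec)$) and the numerator must be the adjoint, which by Corollary~\ref{cor:unique_adjoint} is a nonzero constant because the residual arrangement is empty. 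Having fixed $\Omega$ up to a global scalar, the remaining issue is purely one of sign consistency: can one orient the boundary flags so that iterated residues at vertices come out $\pm1$? The paper settles this with Lemma~\ref{lem:1D_skeleton_connected} -- the union of all one-dimensional strata is connected -- so that choosing signs on one edge propagates consistently to all vertices. No residue is ever identified with the canonical form of a named lower-dimensional family; the argument is combinatorial, not recursive in your sense.

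The obstacle you yourself flag -- ``identify each residue \ldots precisely as the canonical form of a lower-dimensional positive geometry belonging to the same family'' -- is indeed where your route becomes harder than you acknowledge. The strata $O^\ell_{0,j}$, $O^\ell_{1,j}$, $O^\ell(i,j)$ and the secant varieties $\secV^\ell(C_{k+1})$ are \emph{not} limit amplituhedra, so the induction does not close within one family; Propositions~\ref{prop:sing_CH(C_D)_CH(01)} and \ref{prop:sing_XL0^l} identify them with lower-dimensional Chow/secant varieties, which helps, but you would still need to establish that each stratum, equipped with its semialgebraic boundary piece, is a positive geometry in its own right. The paper's orientation-plus-connectivity argument buys an escape from this bookkeeping: once the form is unique and the $1$-skeleton is connected, one does not need to name the intermediate positive geometries at all. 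If you pursue your recursive route you should replace the appeal to ``same family'' by an explicit induction on the strata types of Table~\ref{tab:strat_summary}, using the isomorphisms in Propositions~\ref{prop:sing_CH(C_D)_CH(01)}--\ref{prop:sing_XL0^l} to reduce to the known base cases, or simply adopt the paper's Lemma~\ref{lem:1D_skeleton_connected}.
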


The special case $k=1$ gives a positive geometry in $\PP^2$ that also appears in \cite[{Example 6.1}]{canForms_Lam_Hamed} under the name of pizza slice.
\begin{example}[Pizza slice]\label{ex:pizza_slice}
For $k=1$, we pick a totally positive matrix $Z \in \R^{n\times 3}$ (that is a real $n\times 3$ matrix such that all its $3\times 3$ minors are positive). In fact, we pick the columns of $Z$ to be on the rational normal curve in $\PP^2$ by choosing real numbers $0\leq t_1<t_2<\ldots<t_n\leq 1$ and defining the $i$-th column of $Z$ to be $\gamma_2(t_i) = (1,t_i,t_i^2)$.
The amplituhedron $\aA_1^{t_1,\ldots,t_n}$ is, by definition, the image $\wt{Z}(\gr(1,n)_{\geq 0})$ of the non-negative Grassmannian $\gr(1,n)_{\geq 0}$ under the linear map $\wt{Z}\colon \gr(1,n)_{\geq 0} \to \R^3$, $V\mapsto VZ$.
Since $\gr(1,n)_{\geq 0}$ is a non-negative orthant, the image is a $n$-gon $\PP^2$ with vertices given by the rows of the matrix $Z$.
So the vertices are all on the rational normal curve in the projective plane that we see as a parabola on the affine chart that we chose by parametrizing it via $t\mapsto (1,t,t^2)$. To define the limit amplituhedron $\aA_1^\infty$, we take the union over all amplituhedra  $\aA_1^{t_1,\ldots,t_n}$ that we obtain by choosing any $n$ real numbers in $[0,1]$ and any $n\in \N$. This union is the convex hull of the interval $\{ (1,t,t^2) \mid t\in [0,1]\}$ on the rational normal curve. It is projectively equivalent to the set shown in Figure~\ref{fig:Bd_2_infinituhedron}, essentially equal to the \enquote{pizza slice} in \cite[Example 6.1]{canForms_Lam_Hamed}.
The algebraic boundary of $\aA_1^\infty$ has two irreducible components, namely the rational normal curve itself and the line spanned by $\gamma_2(0)$ and $\gamma_2(1)$. For higher values of $k$, the two components are Chow hypersurfaces; for $k=1$, in $\PP^2$ they coincide with the varieties themselves. 
    \begin{figure}[]
        \centering
        \begin{tikzpicture}
    \path[] (0,0) circle(2cm);
    \path[] (-3,0) -- (3,1);

    \coordinate (A) at (-1.992,0.167)  {};
    \coordinate (B) at (1.83,0.805) {};

    \begin{scope}[rotate=14]
        \fill[Green!20] (A) arc[start angle=160, end angle=371, radius=2cm] -- cycle;
    \end{scope}

    \draw[very thick] (0,0) circle(2cm);
    \draw[very thick] (-3,0) -- (3,1) ;
    \node[] at (-3,0.3) {$\Sec$};
    \node[] at (1.5,1.9) {$C_2$};
    \node[] at (0,-1) {$\aA^\infty_1$};

\end{tikzpicture}
        \caption{The limit amplituhedron $\aA^\infty_1$ with its algebraic boundary.}
        \label{fig:Bd_2_infinituhedron}
    \end{figure}
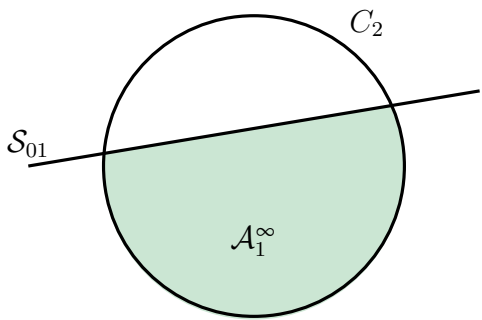
\end{example}

This example already illustrates that we need to choose an interval $[0,1]$ on the rational normal curve. If we dropped our restriction of $t_i$ being in $[0,1]$, the limit amplituhedron would be the convex hull of the rational normal curve, so up to a projective change of coordinates the unit disk. This is, however, not a positive geometry in $\PP^2$ because it cannot have a rational canonical form (for instance for degree reasons: there is no rational section of the canonical divisor of $\PP^2$ with a denominator of degree $2$), see \cite[Section 10]{canForms_Lam_Hamed}. However, adding the secant line $\Sec$ increases the degree by $1$ and we get a rational canonical form as in the definition of a positive geometry. The same happens in Grassmannians for higher values of $k$: we need to pick an interval on the rational normal curve so that we get a special secant line between the end points that gives the correct degree of the denominator to have a chance of finding a rational canonical form. The first nonlinear example is for $k=2$ which gives a limit amplituhedron in $\gr(2,4)$ defined in terms of the twisted cubic $C_3\subset \PP^3$, see Example \ref{ex:twisted_cubic}.\\

\noindent\textbf{Acknowledgements.} We would like to thank Daniele Agostini, Thomas Lam and Kristian Ranestad for helpful pointers.

\section{Algebraic boundary}\label{sec:algebraic_bd}

The \emph{non-negative Grassmannian} $\gr(k,n)_{\geq 0}$ is the semialgebraic subset of $\gr(k,n)$ defined as the intersection of $\gr(k,n)$ in its Pl\"ucker embedding to $\PP^{ \binom{n}{k}-1}$ with the non-negative orthant; equivalently, it consists of all real matrices $V\in \R^{k\times n}$ of rank $k$ whose $k\times k$ minors are all non-negative. 
A (tree-level) amplituhedron depends on a choice of parameters $k,m$, and $n$ and the choice of a \emph{totally positive} $n\times (k{+}m)$ matrix $Z$, that is all its maximal minors are positive. In this paper we are going to fix $m=2$. Then, such a matrix $Z$ induces a rational map
\[
\widetilde{Z}:\gr(k,n) \dashedrightarrow \gr(k,k{+}2)\quad V\mapsto VZ.
\]
By the positivity assumptions, this map is well-defined on $\gr(k,n)_{\geq 0}$. The image $\wt{Z}(\gr(k,n)_{\geq 0}$ of the non-negative Grassmannian under this map is the \emph{tree-level amplituhedron} $\aA_{n,k,2}(Z)$, first defined by Arkani-Hamed and Trnka \cite{Amplituhedron}. 
We construct the totally positive matrices $Z$ via the rational normal curve in $\PP^{k+1}$.
Let $\gamma_{k+1}:\mathbb{R}\rightarrow \PP^{{k+1}}\cap U_{0}$ sending $t\mapsto(1,t,t^2,\dots, t^{k+1})$, be the parametrisation of the rational normal curve $C_{k+1}$ of degree ${k+1}$ on the affine patch $U_0$ corresponding to a non-vanishing first coordinate in $\mathbb{P}^{k+1}$.
A \emph{partition} $I_n$ of the interval $[0,1]\subset\R$ of length $n$ is an increasingly ordered sequence of real numbers $0=t_1<\ldots<t_n=1$.
For fixed positive integers $k$ and $n$, and any partition $I_n = (t_1,t_2,\ldots,t_n)$ of $[0,1]$, we define the real totally positive $n\times (k{+}2)$ matrix 
\[
Z = Z(I)= \begin{pmatrix}
    \gamma_{k+1}(t_1) \\
    \vdots \\
    \gamma_{k+1}(t_n)
\end{pmatrix},
\]
whose $i$-th row is given by $\gamma_{k+1}(t_i)$. The maximal minors of this matrix are Vandermonde matrices and therefore $Z(I)$ is a totally positive matrix. 
We denote the amplituhedron which is the image of the induced map $\widetilde{Z}$ by $\aA^I_k$.
This notation coincides with the previous one as $\aA^I_k=\aA_{|I|,k,2}(Z(I))$.

Next, we define the limit amplituhedron. A refinement $J_{m}$ of a partition $I_n$ of $[0,1]$ is another ordered sequence of $m$ real numbers which contains the ordered sequence of $I_n$ as a sub-sequence.
The \emph{limit amplituhedron}, $\aA_k^\infty$ is the union of $\aA^{I}_k$ over all partitions $I$ of the interval $[0,1]$.

\begin{remark}
    The totally positive matrices $Z(I)$ that we consider here are special. They correspond to realizations of cyclic polytopes in $\R^{k+1}$ with $n$ vertices on the rational normal curve. The convex hull of $n$ points on the rational normal curve is always a cyclic polytope but not every cyclic polytope is affinely equivalent to such a realization (as soon as the number of vertices is big enough), see \cite{cyclic_poly} for more details. We restrict to totally positive matrices $Z(I)$ constructed from the rational normal curve to get a better handle on the limit amplituhedron. 
\end{remark}

\begin{proposition}
    Let $\mathcal{I}$ be the set of all partitions of $[0,1]$ and $\leq$ the refining order on $\mathcal{I}$, that is order induced by inclusions. Then, there is a well defined direct limit with
    \begin{align}
        \lim\limits_{\longrightarrow}
        \aA^I_k = \aA^\infty_k,
    \end{align}
    where $I\in\mathcal{I}$.
\end{proposition}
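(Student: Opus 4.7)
The plan is to exhibit the system as essentially a system of inclusions inside the ambient space $\gr(k,k{+}2)$, so that the direct limit collapses to the union on the right-hand side. The first step is to check that $(\mathcal{I}, \leq)$ is actually directed: given two partitions $I, J$ of $[0,1]$, their ordered union $I \cup J$ (which still contains $0$ and $1$, as both $I$ and $J$ do) is a common refinement. So any cofinal statement about the directed system is meaningful.

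The core step is, for each pair $I \leq J$ with $|I|=n$ and $|J|=m$, to construct a transition map $\phi_{I,J}\colon \aA_k^I \to \aA_k^J$ and to show it is simply the inclusion of subsets of $\gr(k,k{+}2)$. Concretely, the refinement $I \leq J$ gives an order-preserving injection $\sigma\colon \{1,\dots,n\}\hookrightarrow \{1,\dots,m\}$ with $t_i = s_{\sigma(i)}$, so that $Z(I)$ is the submatrix of $Z(J)$ on the rows indexed by $\sigma$. Given $V\in \gr(k,n)_{\geq 0}$, I would define $V'\in \R^{k\times m}$ by $V'_{\sigma(i)} = V_i$ and $V'_j = 0$ for $j \notin \sigma(\{1,\dots,n\})$. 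Then a direct computation gives $V'Z(J) = VZ(I)$, hence $\widetilde{Z(J)}(V') = \widetilde{Z(I)}(V)$, so the image lies in $\aA_k^J$. The only delicate check is that $V'$ lies in the non-negative Grassmannian: any maximal minor of $V'$ whose column set meets the complement of $\sigma(\{1,\dots,n\})$ vanishes by the presence of a zero column, while the remaining maximal minors coincide with maximal minors of $V$ and are therefore non-negative; moreover $\mathrm{rank}(V')=\mathrm{rank}(V)=k$.

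Since $\phi_{I,J}$ is the inclusion into the common ambient space and satisfies the obvious compatibilities $\phi_{I,I}=\mathrm{id}$ and $\phi_{J,K}\circ\phi_{I,J}=\phi_{I,K}$, the system $\{\aA_k^I, \phi_{I,J}\}$ is an honest direct system in the category of sets (or semialgebraic sets). A direct system whose transition maps are inclusions into a fixed set has a direct limit canonically identified with the union of its members; therefore
\[
\lim_{\longrightarrow}\, \aA_k^I \;=\; \bigcup_{I\in\mathcal{I}}\aA_k^I \;=\; \aA_k^\infty,
\]
which is the definition of the limit amplituhedron.

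I do not expect a serious obstacle here: the only thing one has to verify carefully is non-negativity after padding $V$ by zero columns, and this is a one-line observation about which minors survive. If one wanted the direct limit to be taken in a richer category (e.g.\ respecting semialgebraic structure or the natural stratifications), one would also note that the inclusions are semialgebraic, which follows from the same explicit formula $V\mapsto V'$; this is the only point where one might want to say a word more, but it does not affect the set-theoretic identification above.
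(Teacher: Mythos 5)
Your proof is correct and takes essentially the same approach as the paper: both establish that $(\mathcal{I},\leq)$ is directed, realize the transition maps as inclusions by padding $V$ with zero columns over the refining indices, and identify the direct limit with the union. The only difference is that you spell out why the padded matrix stays in the non-negative Grassmannian (which minors vanish and which survive), a point the paper leaves implicit.
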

\begin{proof}
    Let $\mathcal{I}$ and $\leq$ be as above. First, notice that $\leq$, as the order induced by containment of subsequences, is reflexive and transitive, thus a preorder, implying that $(\mathcal{I},\leq)$ is a directed set.
    Next, for partitions $I\leq J\in\cal I$ let $D=J\setminus I$ be their refining set.
    Then, for each $V_I=(v_1,\dots, v_{|I|})\in\gr(k,|I|)_{\geq 0}$ we can define $V_J = (v'_1,\dots, v'_{|J|})\in\gr(k,|J|)_{\geq 0}$ where $v'_i = v_i$ if $i\notin D$ else $v'_i = 0$.
    Thus, $V_IZ(I) = V_JZ(J)$, and we get well defined inclusion maps
    \begin{align*}
        \iota_{IJ}: \aA^I_k\hookrightarrow \aA^J_k.
    \end{align*}
    Therefore, $\langle \aA^I_k,\iota_{IJ}\rangle$ is a directed system, as it also satisfies the additional axioms $\iota_{II}=\operatorname{id}$ and $\iota_{IK} = \iota_{JK}\circ\iota_{IJ}$ for all $I\leq J \leq K \in \mathcal{I}$; hence the direct limit is well defined and the inclusion in $\aA^\infty_k$ is by definition.
\end{proof}  

Next we briefly want to examine the properties of the Euclidean boundary $\partial\aA^\infty_k$, where we write $\open(-)$ for the Euclidean interior operator on $\gr_\R(k,k{+}2)$.

\begin{proposition}\label{prop:int_A_infty}
Let $\mathcal{I}$ be the set of all partitions of the interval $[0,1]\subset\R$.
Then, the Euclidean interior $\open(\aA^{\infty}_k)$ of the limit amplituhedron is given by the union of the Euclidean interiors $\open( A^{I}_k)$ over all $I\in\mathcal{I}$ i.e.
\[
\open\left(\aA^{\infty}_k\right) = \bigcup_{I\in\mathcal{I}}\open (A^{I}_k).
\] 
\end{proposition}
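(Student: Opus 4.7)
The inclusion $\bigcup_{I\in\mathcal{I}}\open(\aA^I_k)\subseteq\open(\aA^\infty_k)$ is immediate: each $\open(\aA^I_k)$ is an open subset of $\aA^I_k\subseteq\aA^\infty_k$ and hence of the maximal open subset $\open(\aA^\infty_k)$.

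For the reverse inclusion, I would fix $x\in\open(\aA^\infty_k)$ together with a Euclidean ball $B_\varepsilon(x)\subseteq\aA^\infty_k$ and seek a partition $I$ with $x\in\open(\aA^I_k)$. By definition $x\in\aA^{I_0}_k$ for some $I_0$, so $x = V_0\,Z(I_0)$ for some $V_0\in\gr(k,|I_0|)_{\geq 0}$. The first observation of the plan is that for every $I$ with $|I|\geq k{+}2$, the map $\wt{Z(I)}$ restricts to a submersion on the strictly positive Grassmannian $\gr(k,|I|)_{>0}$: the linear map $V\mapsto V Z(I)$ has constant rank $k(k{+}2)$, since $Z(I)$ has full column rank by total positivity, and this descends to a submersion between Grassmannians of dimensions $k(|I|-k)$ and $2k$. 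Hence $\wt{Z(I)}(\gr(k,|I|)_{>0})$ is open in $\gr(k,k{+}2)$ and contained in $\open(\aA^I_k)$.

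So it suffices to exhibit a refinement $J\geq I_0$ and a strictly positive $V'\in\gr(k,|J|)_{>0}$ satisfying $V'\,Z(J) = x$. I would construct $J$ by appending new parameters $s_1,\dots,s_m\in(0,1)\setminus I_0$ to $I_0$ and extending $V_0$ by zeros in the new positions to obtain a candidate in $\gr(k,|J|)_{\geq 0}$ still mapping to $x$. The extra columns $\gamma_{k+1}(s_i)$ of $Z(J)$ enlarge the fiber $\wt{Z(J)}^{-1}(x)$ by $mk$ additional Plücker directions, which can then be used to perturb the extended $V_0$ into the strictly positive part without changing the image $x$. The hypothesis $x\in\open(\aA^\infty_k)$ is what keeps this perturbation unobstructed: any persistent obstruction would trap $x$ on the boundary of $\aA^J_k$ for every refinement $J$, and hence on the algebraic boundary of $\aA^\infty_k$, contradicting the interior assumption.

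The main obstacle is the combinatorial positivity bookkeeping in carrying out the perturbation: the non-negative Grassmannian is carved out by all $\binom{|J|}{k}$ Plücker inequalities, and one must arrange that all of them become strict simultaneously. I expect the cleanest route is to invoke the positroid stratification of $\gr(k,|J|)_{\geq 0}$, in which the top cell $\gr(k,|J|)_{>0}$ is open and dense, combined with a general-position argument showing that the fiber $\wt{Z(J)}^{-1}(x)$ meets the top cell as soon as $m$ is large enough and $x$ avoids $\partial\aA^\infty_k$.
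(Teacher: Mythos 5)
Your easy inclusion and the observation that $\wt{Z(I)}$ restricts to a submersion on $\gr(k,|I|)_{>0}$ are both correct. But the step you flag as the ``main obstacle'' is a genuine gap, and it is where all the content lies. What the proposition asks is that every $x\in\open(\aA^\infty_k)$ lies in $\open(\aA^J_k)$ for some $J$; what you set out to prove is the strictly stronger statement that $x$ lies in $\wt{Z(J)}\bigl(\gr(k,|J|)_{>0}\bigr)$. Knowing that this image is an open subset of $\open(\aA^J_k)$ does not give you that it equals $\open(\aA^J_k)$: lower-dimensional positroid cells of $\gr(k,|J|)_{\geq 0}$ can map into the interior of $\aA^J_k$, so the fiber $\wt{Z(J)}^{-1}(x)$ could in principle lie entirely in the boundary of $\gr(k,|J|)_{\geq 0}$, and density of the top cell does not force the fiber to meet it. Appending columns does enlarge the fiber, but it does not automatically tilt it into the strictly positive cone, and your heuristic that a persistent obstruction would force $x\in\partial\aA^\infty_k$ is essentially a restatement of the proposition being proved, not something you may invoke.

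The paper avoids all of this positivity bookkeeping by a purely topological argument: pick open sets $U$, $V$ with $x\in U$ and $\operatorname{cl}(U)\subset V\subset\aA^\infty_k$; observe that $\{\open(V\cap\aA^I_k)\}_{I\in\mathcal{I}}$ is an open cover of the compact set $\operatorname{cl}(U)$; extract a finite subcover; and then use that the amplituhedra are nested under refinement, $I\leq J\Rightarrow\aA^I_k\subset\aA^J_k$ (the inclusion maps $\iota_{IJ}$ from the preceding directed-system proposition), to replace the finitely many partitions in the subcover by their common refinement $\overline{I}$, giving $U\subset\open(\aA^{\overline{I}}_k)$. The compactness-plus-common-refinement step is exactly the piece your argument is missing a substitute for. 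If you want to carry your route through, you would first need a separate lemma showing that $\open(\aA^J_k)$ is, at least for cofinally many $J$, contained in the image of the top positroid cell --- a nontrivial structural statement about the amplituhedron map that the paper's proof never needs.
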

\begin{proof}
    We show both inclusions.
    First, notice that any interior point of $\aA^I_k$ for a fixed partition $I$ is clearly an interior point of $\aA^\infty_k$.
    For the other inclusion, let $p\in\open\aA^\infty_k$ be an interior point.
    Thus, there exists an open set $U\subset \PP^{N-1}$, with $N=\binom{k{+}2}{k}$, such that $p\in U\subsetneq \aA^\infty_k$, moreover we can choose $U$ even smaller, such that there is another open set $V\subsetneq \aA^\infty_k$ with $\operatorname{cl}(U)\subset V$, where $\operatorname{cl}(\cdot)$ denotes the Euclidean closure operator on $\PP^{N-1}$.
    Then, $\bigcup_{I\in\mathcal{I}}\open(V\cap\aA^I)\supset U$ is an open cover of $U$, as $\open (V)\supset U$ following from the monotonicity of the Euclidean interior operator.
    Moreover, since $\aA^\infty_k$ is compact by Tychonoff's theorem, there exists a finite subcover labelled by $\mathcal{F}\subset\mathcal{I}$ with $U\subset\bigcup_{I\in\mathcal{F}}\open(V\cap \aA^I_k)$.
    If there exists a $I_0\in\mathcal{F}$ such that $U\subset \open(V\cap\aA^{I_0}_k)$ then we are done because it is an open set of $\PP^{N-1}$ with $p\in \open(V\cap\aA^{I_0}_k)\subset \aA^{I_0}_k$, that is to say $p$ is an interior point of  $\aA^{I_0}_k$. 
    If there does not exists such a single $I_0$ we can collect all of the $I\in\mathcal{F}$ with $U\cap \open(V\cap\aA^I_k)\neq\varnothing$ in a finite subset $\mathcal{E}$ of $\mathcal{F}$. Then, we can denote by $\overline{I}$ the common refinement of all partitions in $\mathcal{E}$, and since taking interiors is monotone we then have $U\subset\open(V\cap\aA_{\overline{I}})$, what we ought to show.
\end{proof}

We write $\partial -$ for the Euclidean {boundary} operator on $\gr_\R(k,k{+}2)$. 
\begin{proposition}\label{prop:eucl_bd}
    The limit amplituhedron $\aA^\infty_k$ is a closed set in the Euclidean topology.
    Every plane in the Euclidean boundary $\partial \aA^\infty_k\subset \gr(k,k{+}2)$ intersects the rational normal curve $C_{k+1}$ of degree $k{+}1$ in the interval $\gamma_{k+1}([0,1])$ or it intersects the convex hull of $\{\gamma_{k+1}(0),\gamma_{k+1}(1)\}$ taken in the affine chart $U_0$ of $\PP^{k+1}$, which is an interval on the secant line $\Sec$. Conversely, every such plane is in the Euclidean boundary $\partial\aA^\infty_{k}$.
\end{proposition}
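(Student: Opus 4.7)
The plan is to establish the three assertions in sequence: first that $\aA^\infty_k$ is Euclidean closed, then that every $\Lambda \in \partial\aA^\infty_k$ meets either the arc $\gamma_{k+1}([0,1])$ or the chord $\conv\{\gamma_{k+1}(0),\gamma_{k+1}(1)\}$, and finally the converse inclusion.

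For closedness I would realize $\aA^\infty_k$ as the continuous image of a compact set. Each point of $\aA^I_k$ is spanned by $k$ vectors of the form $\sum_i V_{ji}\gamma_{k+1}(t_i)$, which rewrites as the moments $\int_0^1 \gamma_{k+1}(t)\,d\mu_j(t)$ of discrete positive measures $\mu_j = \sum_i V_{ji}\delta_{t_i}$ on $[0,1]$ whose joint data is totally non-negative. The space $K$ of $k$-tuples of finite positive Borel measures on $[0,1]$ of bounded total mass satisfying the limiting total-positivity condition is weak-$*$ compact by Banach--Alaoglu, and the span-of-moments map $K \dashrightarrow \gr(k,k{+}2)$ is continuous on its domain of definition. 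Its image contains every $\aA^I_k$ via discrete measures and, by a weak-$*$ approximation argument, is itself contained in $\aA^\infty_k$; thus $\aA^\infty_k$ equals this image and is compact.

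For the boundary characterization I would invoke Proposition~\ref{prop:int_A_infty}: $\Lambda \in \partial\aA^\infty_k$ iff $\Lambda \in \aA^\infty_k$ but $\Lambda$ is not interior to any $\aA^I_k$. For each finite $I$, the non-negative Grassmannian $\gr(k,|I|)_{\geq 0}$ is stratified by positroid cells whose codimension-one pieces are indexed by cyclic intervals; their images under $\widetilde{Z}$ are the facets of $\aA^I_k$, consisting of $(k-1)$-planes meeting either some vertex $\gamma_{k+1}(t_i)$ or a consecutive secant span $\overline{\gamma_{k+1}(t_i)\gamma_{k+1}(t_{i+1})}$. As $I$ refines, the interior consecutive secants collapse into the arc $\gamma_{k+1}([0,1])$, while the wrap-around secant $\overline{\gamma_{k+1}(1)\gamma_{k+1}(0)}$ persists as the chord $\Sec \cap \conv\{\gamma_{k+1}(0),\gamma_{k+1}(1)\}$, giving the first inclusion. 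Conversely, if $\Lambda$ meets $\gamma_{k+1}(t_0)$ with $t_0 \in [0,1]$, then $\Lambda$ lies on a facet hyperplane of $\aA^I_k$ for any refinement $I \ni t_0$, and a small outward perturbation exits every $\aA^I_k$ and hence $\aA^\infty_k$; the chord case is analogous using the wrap-around facet.

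The main obstacle will be the combinatorial refinement argument: showing rigorously that the interior consecutive-secant facets of $\aA^I_k$ truly collapse into the arc in the refinement limit (rather than producing extraneous strata) and that the wrap-around chord facet persists intact. A secondary technical point is the precise formulation of the total-positivity constraint on the measure space $K$ so that it is preserved under weak-$*$ convergence, together with handling the closed/open behavior on the locus where the $k$ moments become linearly dependent.
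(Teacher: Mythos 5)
The paper derives this proposition from two later results: Lemma~\ref{lem:geom_alg_bd} (which identifies the Euclidean boundary strata of each $\aA^I_k$ as Chow-hypersurface sections through $k{+}1$ consecutive points $\gamma_{k+1}(t_i),\ldots,\gamma_{k+1}(t_{i+k})$) and Lemma~\ref{lem:limit_of_boundary_strata} (which shows the limiting $k$-spaces through a point $\gamma_{k+1}(t)$ are actually attained in $\aA^\infty_k$). Combined with Proposition~\ref{prop:int_A_infty}, the boundary characterization and closedness both drop out. Your proposal does not use either lemma and instead tries to prove the statement self-contained, so the two routes genuinely diverge; unfortunately the divergences are exactly where your argument has gaps.

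Your closedness argument via weak-$*$ compactness of a space of measures is an attractive idea, but it does not close as written. Two obstructions you yourself flag are actually essential, not secondary: (i) you never define the ``limiting total-positivity condition'' on $k$-tuples of measures, and it is not clear that it is a weak-$*$ closed condition (the closure of the totally non-negative matrices under refinement is precisely the object one is trying to understand); (ii) the span-of-moments map is only rational, and on the degeneracy locus (where the $k$ moment vectors become linearly dependent) the image of a compact set under a rational map need not be closed. Without resolving (ii) you cannot conclude that $\aA^\infty_k$ equals the image of a compact set, so compactness does not follow. The paper sidesteps both issues entirely by showing the boundary points are themselves realized as limits of the explicit families in Lemma~\ref{lem:limit_of_boundary_strata}.

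The converse direction also has a concrete gap. You argue that if $\Lambda$ meets $\gamma_{k+1}(t_0)$ then ``a small outward perturbation exits every $\aA^I_k$.'' But the facet hyperplanes of $\aA^I_k$ depend on $I$: a perturbation that exits $\aA^{I_1}_k$ may well land inside $\aA^{I_2}_k$ for a different, finer partition $I_2$, since by Proposition~\ref{prop:int_A_infty} interior points of $\aA^\infty_k$ are exactly those interior to \emph{some} $\aA^I_k$. You need a perturbation that works uniformly over all $I$, which requires knowing that the boundary hyperplane through $\gamma_{k+1}(t_0)$ persists as a supporting hypersurface in the limit — this is precisely the content of Lemma~\ref{lem:geom_alg_bd} plus the density argument of Lemma~\ref{lem:limit_of_boundary_strata}. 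Also note the converse has two halves: you must show such a plane is \emph{in} $\aA^\infty_k$, not merely that it is not interior; your sketch only addresses non-interiority. Finally, the ``positroid facets collapse into the arc under refinement'' step, which you correctly identify as the crux, is the content the paper offloads to those two lemmas; a proof that does not invoke them must supply that argument in full.
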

\begin{proof}
    By Proposition \ref{prop:int_A_infty} it follows that a point is in the boundary of $\aA^\infty_k$ if and only if it is a boundary point of $\aA^I_k$ for all partitions $I$ of $[0,1]$.
    On the other hand, it follows from Lemma \ref{lem:geom_alg_bd} that the (Euclidean) boundary strata $\partial\aA^I_k$ of $\aA^I_k$ are given by the intersection $\CH(\gamma_{k+1}(t_{i}),\ldots, \gamma_{k+1}(t_{i+k}))\cap \aA^I_k\subset\gr(k,k{+}2)$ for any $i\in[n]$ read cyclically.
    Therefore, a $k$-space is a boundary point of $\aA^\infty_k$ if and only if it intersects $\gamma_{k+1}([0,1])$ or the line segment $[\gamma_{k+1}(0),\gamma_{k+1}(1)]\subset\Sec$. Since these $k$-spaces are, by Proposition \ref{lem:limit_of_boundary_strata}, contained in $\aA^\infty_k$, the limit amplituhedron is a closed set with boundary points as described above.
\end{proof}

By $\overline{\:\cdot\:}$ we denote the Zariski closure of a subset in projective space. The \emph{algebraic boundary} $\partial_a\aA_{n,k}$ is the Zariski closure $\overline{\partial \aA_{n,k}}$ of the boundary of $\aA_{n,k}$ in the Euclidean topology of $\gr_\R(k,k{+}2)$.  We extend these definitions to the limit amplituhedron $\aA^\infty_k$.
The goal of this section is then to prove the decomposition of the algebraic boundary $\partial_a\aA^\infty_k$ of the limit amplituhedron $\aA^\infty_k$ in terms of the Chow hypersurfaces of the rational normal curve of degree $k{+}1$ and its fixed secant line $\Sec$ spanned by $\gamma_{k+1}(0)$ and $\gamma_{k+1}(1)$ in $\PP^{k+1}$.

\begin{definition}
    Let $L$ be a linear subspace in $\PP^{n}$ of dimension $n{-}d{-}1$ and $X$ an irreducible projective variety of dimension $d$. Then, we define the \emph{Chow hypersurface} $\CH(X)$ of $X$ as 
    \begin{align*}
        \CH(X) = \overline{\{L\mid X\cap L\neq \varnothing\}}\subset\gr(n{-}d,n{+}1).
    \end{align*}
\end{definition}
\begin{remark}
    As the name suggest the Chow hypersurface is defined by the vanishing of a unique polynomial, up to scaling, in the coordinate ring $\mathbb{C}[\gr(n{-}d,n{+}1)]$, referred to as the \emph{Chow form} $\operatorname{CF}(X)$ of $X$.
\end{remark}

The main result of this section, which is one part of Theorem \ref{thm:main_pos_geom}, is as follows:

\begin{theorem}\label{thm:algebraic_boundary_of_A_infty}
  The algebraic boundary $\partial_a\aA^\infty_k\subset \gr(k,k{+}2)$ of $\aA^\infty_k$ decomposes into the Chow hypersurface $\operatorname{CH}(C_{k+1})$ of the rational normal curve $C_{k+1}$ of degree $k{+}1$ and the Chow hypersurface $\CH(\Sec)$ of the secant line $\Sec$.
  Put algebraically,
  \[
  \partial_a\aA^\infty_k=\operatorname{CH}(C_{k+1})\cup \operatorname{CH}(\Sec)\subset \gr(k,k{+}2).
  \]
\end{theorem}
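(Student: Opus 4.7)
My plan is to leverage Proposition \ref{prop:eucl_bd}, which already identifies the Euclidean boundary as $\partial \aA_k^\infty = B_1 \cup B_2$, where
$B_1 = \{V \in \gr(k,k{+}2) \mid V \cap \gamma_{k+1}([0,1]) \neq \varnothing\}$ is the set of $k$-planes meeting the curve arc, and $B_2 = \{V \mid V \cap [\gamma_{k+1}(0),\gamma_{k+1}(1)] \neq \varnothing\}$ is the set meeting the corresponding segment of $\Sec$. Since the algebraic boundary is the Zariski closure and Zariski closure commutes with finite unions, we have $\partial_a \aA_k^\infty = \overline{B_1} \cup \overline{B_2}$, so it suffices to identify each closure with the corresponding Chow hypersurface.

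For $\overline{B_1} = \CH(C_{k+1})$, I would introduce the incidence variety $I := \{(V,p) \in \gr(k,k{+}2) \times C_{k+1} \mid p \in V\}$. The second projection $\pi_2 \colon I \to C_{k+1}$ has irreducible fibers, each isomorphic to the Schubert variety $\gr(k{-}1,k{+}1)$ of $k$-planes in $\C^{k+2}$ through a fixed line, so $I$ is irreducible. The first projection $\pi_1$ is proper (the Grassmannian is projective) and by definition $\pi_1(I) = \CH(C_{k+1})$. Now $B_1 = \pi_1(\pi_2^{-1}(\gamma_{k+1}([0,1])))$; since $[0,1]$ is Zariski dense in $\A^1_\C$ and $\gamma_{k+1}$ is a morphism, $\pi_2^{-1}(\gamma_{k+1}([0,1]))$ is Zariski dense in $I$, and properness of $\pi_1$ lets Zariski closure commute with image, yielding $\overline{B_1} = \pi_1(I) = \CH(C_{k+1})$. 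The same argument with $C_{k+1}$ replaced by $\Sec$ shows $\overline{B_2} = \CH(\Sec)$, since the real segment $[\gamma_{k+1}(0),\gamma_{k+1}(1)]$ is Zariski dense in the complex line $\Sec$. Combining yields the claimed decomposition, and both pieces are irreducible hypersurfaces by the standard dimension count ($\dim I = 2k{-}1 = \dim\gr(k,k{+}2) - 1$, with $\pi_1$ birational onto its image because a generic $(k{-}1)$-plane in $\CH(C_{k+1})$ meets $C_{k+1}$ in a single point).

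The main subtlety I expect is the density step: verifying that $\pi_2^{-1}(\gamma_{k+1}([0,1]))$ is Zariski dense in the whole incidence variety $I$ rather than in some proper closed subvariety. This reduces to the irreducibility of $I$ together with the fact that a non-empty real interval is Zariski dense in $\A^1_\C$, so its image under the polynomial morphism $\gamma_{k+1}$ is Zariski dense in $C_{k+1}$, and then pullback under the flat projection $\pi_2$ preserves density. Once this is in place, everything else is a formal manipulation of Zariski closures via properness of Grassmannian projections, combined with the description of the Euclidean boundary already provided by Proposition \ref{prop:eucl_bd}.
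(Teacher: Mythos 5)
The weight of your argument rests on reading Proposition~\ref{prop:eucl_bd} as giving the exact set equality $\partial\aA_k^\infty = B_1\cup B_2$, i.e.\ that \emph{every} $k$-space meeting $\gamma_{k+1}([0,1])$ or the segment lies in the Euclidean boundary. That is the only place you obtain the lower bound $\CH(C_{k+1})\cup\CH(\Sec)\subseteq\partial_a\aA_k^\infty$, and it is precisely the fragile step. Note that the supporting input cited in the proof of Proposition~\ref{prop:eucl_bd}, namely Lemma~\ref{lem:limit_of_boundary_strata}, only shows that $\partial\aA_k^\infty$ meets each fiber $\cV_t$ (the set of $k$-spaces through $\gamma_{k+1}(t)$) in a \emph{Zariski dense} subset, not in all of $\cV_t$. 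For $k\geq 2$ one does not expect $\cV_t\subseteq\aA_k^\infty$: a line through $\gamma_3(1/2)$ pointing away from the convex hull of the arc should fail the positivity constraints, so $B_1\not\subseteq\aA_k^\infty$ and hence $B_1\not\subseteq\partial\aA_k^\infty$. Once the ``conversely'' direction is weakened to density-in-each-fiber, the identity $\partial_a\aA_k^\infty=\overline{B_1}\cup\overline{B_2}$ does not follow from ``Zariski closure commutes with finite unions''; you only get the inclusion $\partial_a\aA_k^\infty\subseteq\CH(C_{k+1})\cup\CH(\Sec)$.

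Indeed the paper's own proof of Theorem~\ref{thm:algebraic_boundary_of_A_infty} deliberately avoids this: it returns to Lemma~\ref{lem:limit_of_boundary_strata}, introduces the map $\varphi\colon t\mapsto\mathcal{Y}_t\subset\cV_t$ whose fibers $\mathcal{Y}_t$ are codimension-one, irreducible, and semialgebraic inside $\CH(C_{k+1})$, and then shows that the union $\varphi([0,1])$ is top-dimensional and hence Zariski dense in $\CH(C_{k+1})$. That is the positive argument you need and cannot currently shortcut. Your incidence-variety computation that $\overline{B_1}=\CH(C_{k+1})$ and $\overline{B_2}=\CH(\Sec)$ via the second projection and properness of the first is correct and clean, as is the irreducibility/dimension count $\dim I=2k-1=\dim\gr(k,k+2)-1$; but it is applied to the wrong set. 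You would need to run a comparable density argument with $\partial\aA_k^\infty\cap\cV_t$ in place of $\cV_t$, which is essentially the content of Lemma~\ref{lem:limit_of_boundary_strata} and the $\varphi$-construction. One further small caveat: Zariski closure does not commute with arbitrary images, only with images under proper (closed) morphisms, and the set you push forward is a semialgebraic real family rather than a constructible set, so even the closure bookkeeping would need to be done with the real semialgebraic dimension argument the authors use rather than by invoking properness alone.
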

\begin{remark}\label{rem:physics_lingo}
    Firstly, notice that this implies that the limit amplituhedron $\aA^\infty_k$ is indeed a semialgebraic set. 
    Secondly, in order to match the notation more commonly found in Physics literature, notice that we can write
    \begin{align}
        \operatorname{CH}(\Sec)= \{(A_1\ldots A_k)\in\gr(k,k{+}2)\mid \langle A_1\ldots A_k\Sec\rangle=0\},
    \end{align}
    where $\langle A_1\ldots A_k\Sec\rangle$ denotes the determinant of the $(k{+}2)\times (k{+}2)$ matrix whose rows are given by the $A_i's$ and $\Sec$. In an abuse of notation we will also write $\CH(\Sec)=\langle A_1\ldots A_k\Sec\rangle$.
\end{remark}

Let us start by considering an explicit example.
\begin{example}\label{ex:twisted_cubic}
    For $k=2$ the limit amplituhedron $\aA^\infty_2$ lives in $\gr(2,4)$, and its algebraic boundary is given by the Chow hypersurfaces of the twisted cubic $C_{3}$ in $\PP^3$ and its secant line $\Sec$ spanned by $\gamma_3(0)$ and $\gamma_3(1)$.
    In this case, we can compute the stratification into singular loci of its algebraic boundary symbolically.
    The results of this computation, using \texttt{Macaulay2} \cite{M2}, are displayed in Table \ref{tab:alg_bd_strata}.
    \begin{table}[h]
        \centering
        \begin{tabular}{c|c|c|c|c}
                codim & 1 & 2 & 3 & 4 \\ \hline\hline
                \multirow{4}{4em}{\centering strata} & $\operatorname{CH}(C_3)$ & $\operatorname{Sing}(\operatorname{CH}(C_3))$ & $L_0$ & $\Sec$ \\
             & $\CH(\Sec)$ & $S_0$ &  $L_1$ & $\mathbf{T}_{\gamma_3(0)}(C_3)$ \\ 
             & & $S_{1}$ & $C_0$ & $\mathbf{T}_{\gamma_3(1)}(C_3)$ \\ 
             & & $S_{01}$ & $C_1$ & \\ \hline
             \end{tabular}
        \caption{Strata of the algebraic boundary of $\aA^\infty_2\subset \gr(2,4)$ ordered by codimension.}
        \label{tab:alg_bd_strata}
    \end{table}
    We can interpret each stratum geometrically, see  Section~\ref{sec:stratification}. The irreducible components are two Chow hypersurfaces: Firstly, the variety of lines in $\PP^3$ that intersect $C_3$, that is $\CH(C_3)$, secondly the variety of lines in $\PP^3$ intersecting the secant line $\Sec$, that is $\CH(\Sec)$.
    It is then well known that the singular locus of the variety of lines meeting the twisted cubic, is precisely the variety of secant lines (i.e. those lines that meet the twisted cubic in two points), which is of codimension $2$.
    We have $3$ more strata in this codimension which come from the intersection of the codimension $1$ strata: $S_0$ and $S_1$ are the surfaces of lines going through the points $\gamma_3(0)$ and $\gamma_3(1)$, respectively.
    $S_{01}$ is the surface of lines intersecting both $\Sec$ and $C_3$, generically in one point each. 
    In codimension $3$, which is dimension $1$, we have four irreducible components: $L_0$ and $L_1$ are the lines in $\gr(2,4)$ consisting of lines containing the point $\gamma_3(0)$, $\gamma_3(1)$ and intersecting
    \[
    \operatorname{rowspan}
    \begin{pmatrix}
        \gamma_3(1) \\ \partial_t\gamma_3(0)
    \end{pmatrix} \quad \text{and} \quad \operatorname{rowspan}
    \begin{pmatrix}
        \gamma_3(0) \\ \partial_t\gamma_3(1)
    \end{pmatrix},
    \]
    respectively, where $\partial_t$ means taking the derivative with respect to $t$.
    On the other hand, $C_0$ and $C_1$ are the conics in $\gr(2,4)$ of secants going through the points $\gamma_3(0)$ and $\gamma_3(1)$.
    Finally, the three vertices are precisely the tangent lines $\mathbf{T}_{\gamma_3(0)}(C_3)$ and $\mathbf{T}_{\gamma_3(1)}(C_3)$, as well as the secant line $\Sec$ itself, all being part of the singular locus of the $1$ dimensional strata. 
\end{example}

\begin{remark}\label{rem:bezout_matrix}
By Theorem \ref{thm:algebraic_boundary_of_A_infty} there are two Chow hypersurfaces of interest to us $\CH(C_{k+1})$ and $\CH(\Sec)$. The latter, has a rather simple structure: The intersection condition, $L\cap \Sec$, gives us a linear condition on the Pl\"uckers of $\gr(k,k{+}2)$, and is thus a hyperplane section of the Grassmannian. The Chow hypersurface of the rational normal curve, on the other hand, is the vanishing locus of the determinant of the B\'ezout matrix, as we shall see now. Start by considering the homogeneous parameterisation of the rational normal curve
\[
     C_{k+1}=\{[s^{k+1}:s^{k}t:\ldots:t^{k+1}]\mid[s:t]\in \PP^1\} \subset\PP^{k+1}.
\]
Also, consider a codimension $2$ plane $H\in\PP^k+1$ given as the intersection of two codimension $1$ hyperplanes, i.e.
\begin{align}\label{eq:H_kernel}
    H = \ker\begin{bmatrix}
    x_0 & \dots & x_{k+1}\\
    y_0 & \dots & y_{k+1}
    \end{bmatrix},
\end{align}
where $[x_0:\ldots:x_{k+1}],[y_0:\ldots:y_{k+1}]\in\PP^{k+1}$.
We then define two univariate polynomials in $t$ as
\begin{align}\label{eq:resultant_poly}
f=x_0+x_1\cdot t+\ldots+x_{k+1}\cdot t^{k+1}\quad\text{and}\quad
g=y_0+y_1 t+\ldots+y_{k+1}\cdot t^{k+1},
\end{align}
where the coefficients live in $\C$.
Then, $H$ intersects $C_{k+1}$ in $\gamma_{k+1}(t)$, that is $H\in\CH(C_{k+1})$, if and only if $f(t) = g(t) = 0$, that is to say when the two polynomials have a common root in $\C$. This, in turn, is precisely the case when the resultant of $f$ and $g$ vanishes. The resultant is the determinant of the Sylvester matrix of $f$ and $g$, which equals the determinant of the \emph{B\'ezout matrix} $B(f,g)$. For $0\leq i,j\leq k$, let $m_{ij}=\min(i,n-1-j)$ and $M_{ij}=\max(0,i-j)$, then we can define the entries of the complex $(k+1)\times(k+1)$ B\'ezout matrix as
\[ B_{ij}=\sum_{k=M_{ij}}^{m_{ij}}p_{(j+k+1),(i-k)},
\]
where the $p_{(j+k+1),(i-k)}$ are the primal Pl\"ucker coordinates of $H$. These are given by the $2\times 2$ minors of the matrix representing $H$ in Equation \eqref{eq:H_kernel}, obtained by deleting all columns except those corresponding to the subscript. This shows that $\CH(C_{k+1})$ is given by the vanishing locus of $\det B(f,g)$. 
\end{remark}

The proof of Theorem \ref{thm:algebraic_boundary_of_A_infty} proceeds via a series of Lemmas, beginning with a direct generalization of \cite[Lemma 2.2]{ranestad2024adjoints}.
\begin{lemma} \label{lem:projaway}
    Fix a partition $I_n=(0=t_1<\ldots<t_n=1)$ of the interval $[0,1]$,
    and let $Z$ be the totally positive $ n\times(k+2)$ matrix with rows $\gamma_{k+1}(t_1)$ through $\gamma_{k+1}(t_n)$.
    Let $A\in {\gr}(k,k{+}2)$ be a real $k$-space in $\R^{k+2}$ given by the matrix with rows $A_1$ through $A_k$. 
    The projection  $\R^{k+2} \to \R^2$ away from $A$ is represented by a matrix $A^\perp \in \mathbb{R}^{2 \times (k+2)}$, whose rowspan is the orthogonal complement of the rowspan of $A$.
    Let $z_i = A^\perp Z_i \in \mathbb{R}^2$, where $Z_i \in \mathbb{R}^{k+2}$ is the $i$-th row of $Z$, and let $M = (z_1,z_2,\ldots,z_n) \in \mathbb{R}^{2 \times n}$.
    There is a nonzero constant $c \in \mathbb{R}$ such that for all $1 \leq i < j \leq n$ the $(i,j)$ minor of $M$ is \[ \det(M_{ij}) = \det(z_iz_j) = c \cdot \langle A_1\ldots A_kij \rangle.\]
\end{lemma}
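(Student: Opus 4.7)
My plan is to derive the identity from a single block-matrix determinant computation. The crucial input is the orthogonality hypothesis: since the rowspan of $A^\perp$ is the orthogonal complement of the rowspan of $A$, we have $A (A^\perp)^T = 0$, which produces a block of zeros after a suitable multiplication and lets the determinant factor.

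Concretely, I would introduce two auxiliary $(k{+}2) \times (k{+}2)$ matrices. Let $P$ be the matrix with rows $A_1, \ldots, A_k, Z_i^T, Z_j^T$, so that $\det(P) = \langle A_1 \ldots A_k i j \rangle$ by definition of the bracket. Let $B$ be the matrix obtained by stacking $A$ on top of $A^\perp$; it is invertible because the rowspans of $A$ and $A^\perp$ together span $\R^{k+2}$. Computing $P B^T$, the top-left $k \times k$ block is the Gram matrix $A A^T$, the top-right $k \times 2$ block is $A (A^\perp)^T = 0$, and the bottom-right $2 \times 2$ block is exactly the matrix $M$ whose rows are $z_i^T$ and $z_j^T$. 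The block-triangular structure yields
\[
\det(P)\det(B) \;=\; \det(A A^T)\det(M),
\]
which rearranges to $\det(z_i, z_j) = c \cdot \langle A_1 \ldots A_k i j \rangle$ with $c = \det(B)/\det(A A^T)$.

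Finally, I would verify that $c$ is nonzero and independent of $(i,j)$. Independence is immediate from the formula, since $c$ depends only on $A$ and $A^\perp$. Non-vanishing follows from two observations: $B$ is invertible as noted, and $A A^T$ is the Gram matrix of $k$ linearly independent vectors, hence positive definite. I do not anticipate any real obstacle here; the only subtle point is recognizing the block-triangular structure of $P B^T$, which is precisely what the orthogonality assumption on $A^\perp$ buys. As a sanity check, one could alternatively argue that both sides are alternating bilinear forms in $(Z_i, Z_j)$ vanishing whenever $Z_i$ or $Z_j$ lies in $A$, so both descend to the two-dimensional quotient $\R^{k+2}/A$ on which the space of such forms is one-dimensional; the block-matrix computation has the virtue of producing the constant $c$ explicitly.
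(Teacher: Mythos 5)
Your proof is correct and takes essentially the same approach as the paper: both exploit the orthogonality $A(A^\perp)^T = 0$ to produce a block-triangular matrix product whose determinant factors into $\det(M_{ij})$ times $\langle A_1\ldots A_k i j\rangle$. The only (cosmetic) difference is that the paper stacks a left inverse $A^\dag$ of $A^T$ on top of $A^\perp$ instead of $A$ itself, so the top-left block becomes the identity rather than the Gram matrix $AA^T$, yielding $c = \det\begin{pmatrix} A^\dag \\ A^\perp \end{pmatrix}$ directly instead of your $c = \det(B)/\det(AA^T)$.
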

\begin{proof}
The matrix $A\in \R^{k\times (k+2)}$ has rank $k$. Therefore, its transpose $A^{T} \in \R^{(k+2)\times k}$ has a left inverse $A^\dag \in \R^{k\times (k+2)}$ so that $A^\dag A^{T} = {\operatorname{id}}_{k\times k}$. Block multiplication of matrices shows
\[ \begin{pmatrix}
    A^\dag \\ 
    A^\perp
\end{pmatrix} \cdot \begin{pmatrix}
    A^{T} ~ Z_i ~  Z_j
\end{pmatrix} \, =  \, \begin{pmatrix}
    {\operatorname{id}}_{k \times k} & A^\dag(Z_i~Z_j) \\ 
    0 & M_{ij}
\end{pmatrix}. \]
Taking determinants gives $\det(M_{ij})$ on the right hand side and 
\[ \det \begin{pmatrix}
    A^\dag \\ 
    A^\perp
\end{pmatrix} \cdot \langle A_1\ldots A_kij \rangle
\] 
on the left hand side. Since the image of $A^\dag$ is orthogonal to $A^\perp$, the constant $c = \det \begin{pmatrix}
    A^\dag \\ 
    A^\perp
\end{pmatrix}$ is nonzero.
\end{proof}

Similarly there is an immediate generalization of \cite[Theorem 3.1]{ranestad2024adjoints}.
\begin{lemma}\label{lem:geom_alg_bd}
    Let $n>k+2$. The algebraic boundary $\partial_a\mathcal{A}_{n,k}$ of $\mathcal{A}_{n,k}$ is the union of the hyperplane sections of $\gr(k,k{+}2)$ given by $\langle A_1\dots A_k ii{+}1\rangle=0$ for $i\in[n{-}1]$, and $\langle A_1\dots A_k 1n\rangle=0$, where $A_i\in\mathbb{P}^n$.
    Geometrically speaking there is a Zariski dense set of spaces $A\in \gr(k,k{+}2)$, given as a rank $k$ matrix with rows, $A_1$ through $A_k$ in the boundary stratum $\partial_a\mathcal{A}_{n,k,2}\cap \langle A_1\dots A_k i i{+}1\rangle$ which intersect the convex hulls $\operatorname{conv}(Z_{i},Z_{i+1})$ and $\operatorname{conv}(Z_{i+1},Z_{i+2},Z_{i+3})$ for all $i\in [k]$ read cyclically.
\end{lemma}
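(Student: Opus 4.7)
The plan is to adapt the argument for the $k=2$ case from \cite[Theorem~3.1]{ranestad2024adjoints}: Lemma~\ref{lem:projaway} reduces the boundary analysis of $\aA_{n,k,2}\subset\gr(k,k{+}2)$ to that of a cyclic polygon in $\R^2$, and the claim follows by tracking when this polygon degenerates at the origin.

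First, fix $A\in\gr(k,k{+}2)$ with rows $A_1,\ldots,A_k$, form the projection $A^\perp\in\R^{2\times(k+2)}$, and set $z_i=A^\perp Z_i\in\R^2$ for $i\in[n]$. By Lemma~\ref{lem:projaway}, $\det(z_i,z_j)=c\cdot\langle A_1\ldots A_k i j\rangle$ for a nonzero constant $c$, so the hyperplane section $\{\langle A_1\ldots A_k i i{+}1\rangle=0\}$ is exactly the locus in $\gr(k,k{+}2)$ where the cyclically consecutive projected points $z_i,z_{i+1}$ are collinear with the origin. Next, one invokes the standard description of the $m=2$ amplituhedron: $A\in\aA_{n,k,2}$ iff the cyclic configuration $z_1,\ldots,z_n,z_1$ in $\R^2$ winds around the origin exactly $k$ times, equivalently iff the sign sequence $\operatorname{sign}\langle A_1\ldots A_k i i{+}1\rangle$ has exactly $k$ sign changes as $i$ runs cyclically through $[n]$. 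The Euclidean boundary of $\aA_{n,k,2}$ is then precisely the locus where this winding number is about to change, which happens iff some consecutive segment $\overline{z_i z_{i+1}}$ passes through the origin, that is, iff some cyclically consecutive bracket $\langle A_1\ldots A_k i i{+}1\rangle$ vanishes. Taking Zariski closures in $\gr(k,k{+}2)$ yields the stated union of hyperplane sections. The reverse inclusion is obtained by exhibiting one explicit Euclidean boundary point in each such hyperplane section and perturbing it inside the hyperplane: since the hyperplane section has the same Krull dimension as the Chow hypersurface of $C_{k+1}$ or of $\Sec$ restricted there, such perturbations sweep out a Zariski-dense subfamily of boundary points.

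For the geometric statement, note that $\langle A_1\ldots A_k i i{+}1\rangle=0$ is precisely the condition that the $(k{-}1)$-flat $A\subset\PP^{k+1}$ meets the line $\overline{Z_i Z_{i+1}}$; for generic $A$ in this stratum the meeting point lies in the relative interior of the segment $\operatorname{conv}(Z_i,Z_{i+1})$, since otherwise $A$ would have an additional bracket vanish and hence lie in a higher-codimension stratum. The further intersections with the $2$-simplices $\operatorname{conv}(Z_{i+1},Z_{i+2},Z_{i+3})$ (and the analogous consecutive triples) follow from a dimension count — a $(k{-}1)$-flat and a $2$-flat in $\PP^{k+1}$ generically meet in a point — combined with the representation $A=VZ$ for a nonnegative $V\in\gr(k,n)_{\geq 0}$, which forces the respective meeting points to lie inside the convex hulls of the three spanning rows of $Z$.

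The main obstacle will be pinning down precisely that only \emph{cyclically consecutive} brackets contribute to the algebraic boundary: one must verify that whenever $\langle A_1\ldots A_k i j\rangle$ vanishes for a non-consecutive pair $(i,j)$ and no consecutive bracket vanishes, the corresponding $A$ remains in the Euclidean interior of $\aA_{n,k,2}$. Through the projection this reduces to the elementary fact that a diagonal of the cyclic polygon passing through the origin does not change the winding number around the origin; granting this, the Zariski-closure step and the geometric intersection picture follow by standard arguments generalizing the $k=2$ case.
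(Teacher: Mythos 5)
Your proposal takes a genuinely different route from the paper. The paper constructs, for each cyclically consecutive pair $(i,i{+}1)$, an explicit $(2k{-}1)$-dimensional family $\mathcal{Y}\subset\gr(k,n)_{\geq 0}$ of nonnegative matrices whose images under $\wt Z$ are Zariski dense in the facet $\partial_a\aA_{n,k}\cap\{\langle A_1\ldots A_k\, i\, i{+}1\rangle=0\}$, and then cites \cite[Theorem~3.1]{ranestad2024adjoints} together with Lemma~\ref{lem:projaway} to rule out additional boundary components. You instead invoke the sign-flip/winding-number description of the $m=2$ amplituhedron as a black box and read off the boundary from where that description degenerates. Both can be made to work, but yours imports a substantially deeper known theorem (the proof of the sign-flip characterization for $m=2$) whereas the paper's argument is more constructive and, apart from the single reference, self-contained.

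There is, however, a concrete inaccuracy in the way you invoke the characterization. You write that $A\in\aA_{n,k,2}$ iff the sign sequence $\operatorname{sign}\langle A_1\ldots A_k\, i\, i{+}1\rangle$ has exactly $k$ sign changes as $i$ runs cyclically. That is not the sign-flip condition. In the standard $m=2$ characterization the cyclically consecutive brackets $\langle Y\, i\, i{+}1\rangle$ must all be \emph{positive} (with a twist of sign $(-1)^k$ on $\langle Y\, 1\, n\rangle$), so their sign sequence has at most one cyclic sign change, not $k$; the ``exactly $k$ sign changes'' condition applies to the non-consecutive sequence $(\langle Y\, 1\, 2\rangle,\langle Y\, 1\, 3\rangle,\ldots,\langle Y\, 1\, n\rangle)$. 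Your later use — the boundary is exactly where some consecutive bracket $\langle A_1\ldots A_k\, i\, i{+}1\rangle$ degenerates to zero, and non-consecutive $\langle A_1\ldots A_k\, i\, j\rangle$ vanishing does not leave the interior — is the right consequence, so the argument is repairable, but as written the equivalence you assert is false and would need to be replaced by the correct sign conditions.

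Finally, the geometric clause of the Lemma (intersection with $\operatorname{conv}(Z_i,Z_{i+1})$ and with the triangles $\operatorname{conv}(Z_{i+1},Z_{i+2},Z_{i+3})$) is where the paper's explicit family $\mathcal{Y}$ earns its keep: the chosen matrices have sparse supports forcing exactly these convex-hull incidences. Your sketch of this part via a pure dimension count does not by itself produce the specific triangles $(Z_{i+1},Z_{i+2},Z_{i+3})$ asserted in the statement; you would need to supplement it with a concrete choice of preimage in $\gr(k,n)_{\geq 0}$, which essentially recreates the paper's construction.
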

\begin{proof}
    We follow \cite{ranestad2024adjoints}.
    Set $i=1$ and write $e_i$ for the $i$-th standard basis row-vector of $\R^n$.
    Consider the following $2k{-}1$ dimensional semialgebraic set of rank $k$ matrices $\mathcal{Y}\subset \gr(k,n)_{\geq 0}$ with non-negative maximal minors, comprised of $k\times n$ matrices $Y$ whose rows $Y_j$ are given as \begin{align}\label{eq:family_of_NN_mat}
     Y_j = 
    \begin{cases}
       e_1+ y_{1,1}e_2 & j=1,\\
       y_{i,1}e_{i{+}2}+y_{i,2}e_{i{+}3}+e_{i{+}3} & k\geq j>1,
    \end{cases}
    \end{align}
    where $y_{i,j}\in\R_{\geq 0}$. 
    Since the intersection $\partial_a\mathcal{A}_{n,k}\cap \langle A_1\dots A_k 1 2\rangle$ is irreducible and of dimension $2k{-}1$ we get that $\widetilde{Z}(\mathcal{Y})$ is Zariski dense in $\partial_a\mathcal{A}_{n,k}\cap \langle A_1\dots A_k 1 2\rangle$. 
    For a generic $Y\in \mathcal{Y}$, denote its image under $\widetilde{Z}$ in $\gr(k,k{+}2)$  by $A=(A_1\dots A_k)$.
    Geometrically speaking, the space $A\in\gr(k,k{+}2)$ given by a rank $k$ matrix with rows $A_1$ through $A_k$, intersects $\operatorname{conv}(Z_1,Z_2)$ and $\operatorname{conv}(Z_i,Z_{i{+}1},Z_{i{+}2})$ for all $i\in [k]\setminus\{1\}$.
    It follows that $\langle A_1\ldots A_k12\rangle=0$ as well as $\langle A_1\ldots A_kij\rangle\neq 0$ for other $i<j\in [n]$.
    To get the other cases, $i\neq 1$, define the operator $\sigma:\mathbb{R}^{k\times n}\rightarrow\mathbb{R}^{k\times n}$ which applies a circular shift to the columns and then flipping the signs in the first column.
    After that, simply consider $\sigma^{i-1}(\mathcal{Y})$.
    It remains to show that there are no other boundary strata, that is $\langle A_1\ldots A_k1j\rangle\neq 0$ for $j\in[n]$ for a space $A\in \aA^{n,k}$ whose rows are given by $A_1$ through $A_k$. This is immediate from the proof of \cite[Theorem 3.1]{ranestad2024adjoints} and Lemma \ref{lem:projaway}.
\end{proof}

\begin{lemma}\label{lem:limit_of_boundary_strata}
     For any $t\in[0,1]$ consider the subset $\mathcal{V}_t$ of $\CH(C_{k+1})\subset \gr(k,k{+}2)$ consisting of $k$-spaces containing the point $\gamma_{k+1}(t)$ of the rational normal curve $C_{k+1}$. Then, the Euclidean boundary of $\aA^\infty_k$ intersects $\cV_t$ in a Zariski dense subset of $\mathcal{V}_t$. 
\end{lemma}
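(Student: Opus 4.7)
The plan is to exhibit a $(2k-2)$-dimensional subfamily of $\mathcal{V}_t$ that lies in $\partial\aA^\infty_k$. Since $\mathcal{V}_t\cong \gr(k-1,k+1)$ is irreducible of dimension $2k-2$, any constructible subset of the maximal dimension is automatically Zariski dense. (For $k=1$, $\mathcal{V}_t=\{\gamma_2(t)\}$ is a single point on the rational normal curve, so the claim is immediate.)

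First I would define a family $\mathcal{F}$ consisting of $k$-planes of the form $A=\operatorname{span}(\gamma_{k+1}(t),p_1,\ldots,p_{k-1})$, where each $p_j=\lambda_j\gamma_{k+1}(u_j)+(1-\lambda_j)\gamma_{k+1}(v_j)$ lies in the relative interior of a chord of $C_{k+1}$ with $u_j,v_j\in[0,1]\setminus\{t\}$ generic and $\lambda_j\in(0,1)$. For each such $A$, I would choose a partition $I$ containing $t=t_{i_0}$ and all the $u_j,v_j$, and realize $A=\widetilde{Z}(V)$, where $V\in\gr(k,|I|)_{\geq 0}$ is the sparse matrix whose first row is the standard basis vector $e_{i_0}$ and whose $(j{+}1)$-st row is $\lambda_je_{a_j}+(1-\lambda_j)e_{b_j}$ with $a_j,b_j$ the indices of $u_j,v_j$ in $I$. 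This places $A$ in $\aA^I_k\subset \aA^\infty_k$.

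To check that $A\in \partial\aA^\infty_k$, I would argue by contradiction using Proposition~\ref{prop:int_A_infty} and Lemma~\ref{lem:geom_alg_bd}. Since $\gamma_{k+1}(t)=Z_{i_0}$ lies in the row span of $A_1,\ldots,A_k$, the bracket $\langle A_1\ldots A_k\,i_0\,(i_0{+}1)\rangle$ vanishes, and Lemma~\ref{lem:geom_alg_bd} identifies $A$ as a point on a facet of $\aA^I_k$. If instead $A\in\operatorname{int}(\aA^\infty_k)$, Proposition~\ref{prop:int_A_infty} would give $A\in\operatorname{int}(\aA^{I'}_k)$ for some $I'$; setting $I''=I'\cup\{t\}$, the inclusion $\aA^{I'}_k\subset \aA^{I''}_k$ together with openness yields $\operatorname{int}(\aA^{I'}_k)\subset\operatorname{int}(\aA^{I''}_k)$, whence $A\in \operatorname{int}(\aA^{I''}_k)$. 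But the bracket-vanishing applied to $I''$ places $A$ on a facet of $\aA^{I''}_k$, a contradiction.

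For the dimension count, the parameter space of $\mathcal{F}$ has dimension $3(k-1)$ (two for $(u_j,v_j)$ and one for $\lambda_j$ per $j$), while the projection $\mathcal{F}\to \mathcal{V}_t$ sending parameters to the spanned $k$-plane has fibers of dimension $k-1$: for fixed $A$, each $p_j$ must lie on the one-dimensional curve $A\cap \secV(C_{k+1})$ (for $k\geq 2$), and only finitely many chords of $C_{k+1}$ pass through a generic point of the secant variety. Hence the image of $\mathcal{F}$ in $\mathcal{V}_t$ is $(2k-2)$-dimensional, as required. The main obstacle is the non-negativity verification for the sparse matrix $V$ in the first step: the sign of each maximal minor depends on the relative ordering of the row supports within the partition, so one has to either refine the partition so that these supports form a non-crossing configuration compatible with the positroid cell structure of $\gr(k,|I|)_{\geq 0}$, or appeal to the cyclic-shift-and-sign-flip trick used in the proof of Lemma~\ref{lem:geom_alg_bd}.
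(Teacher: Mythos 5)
Your approach is genuinely different from the paper's. The paper's proof takes the $(2k{-}1)$-dimensional family $\mathcal{Y}$ of sparse matrices from the proof of Lemma~\ref{lem:geom_alg_bd} (which is designed so that $\widetilde{Z}(\mathcal{Y})$ lies on the Euclidean boundary of $\aA^I_k$) and then passes to the limit along a refining sequence of partitions, collapsing one triangle to the single point $\gamma_{k+1}(t)$. Boundary membership is thereby inherited from Lemma~\ref{lem:geom_alg_bd}; the only remaining work is a codimension count. You instead attempt a static construction (span of $\gamma_{k+1}(t)$ and $k{-}1$ chord points) and then argue boundary membership from scratch. That is where the proposal runs into trouble.

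The central gap is the claim that $\langle A_1\ldots A_k\,i_0\,(i_0{+}1)\rangle = 0$ ``identifies $A$ as a point on a facet of $\aA^I_k$,'' and the resulting contradiction with $A\in\operatorname{int}(\aA^{I''}_k)$. Lemma~\ref{lem:geom_alg_bd} determines the \emph{algebraic} boundary, i.e., the Zariski closure $\overline{\partial\aA^I_k}$, as a union of hyperplane sections. It does not assert that these hyperplanes avoid the interior of $\aA^I_k$, nor that $\aA^I_k$ lies on one side of each $\{\langle A_1\ldots A_k\,i\,(i{+}1)\rangle = 0\}$. That stronger statement is the sign description of the $m=2$ amplituhedron, which is a nontrivial theorem not established or cited in the paper. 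Without it, a point of $\aA^{I''}_k$ on which a facet bracket vanishes need not, a priori, lie on the Euclidean boundary. This is precisely the step the paper's limiting construction sidesteps by starting from points already known to be boundary points of $\aA^I_k$.

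Two further issues are worth flagging. First, the positivity of the sparse matrix $V$: as you acknowledge, the minors are not automatically nonnegative for arbitrary $(u_j,v_j)$. In fact nested or crossing supports produce negative minors; the viable configurations are essentially disjoint consecutive supports in cyclic order, as in the paper's $\mathcal{Y}$. This restriction to an open cone does not by itself destroy the dimension count (the real cone is Zariski dense in the complexified parameter space), but it is a necessary and unperformed verification. Second, the fiber-dimension argument implicitly requires the map $(u_j,v_j,\lambda_j)\mapsto A$ to be dominant onto $\mathcal{V}_t$, which amounts to saying that for generic $A\in\mathcal{V}_t$ the curve $A\cap\secV^2(C_{k+1})$ is non-degenerate in $A$ (so $k{-}1$ of its points can span the complement of $\gamma_{k+1}(t)$). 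This is plausible but not addressed. Neither of these is as serious as the facet-membership gap, but all would need to be filled for the argument to go through.
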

\begin{proof}
    Fix $n_0>k+2\in \N$, a partition $I_{n_0}$ of $[0,1]$ and $t\in[0,1]$ such that $t_i\leq t\leq t_{i+1}$ for some $i<n_0$.
    First, consider the case where we set $i=1$.
    Let $\mathcal{Y}$ be the $(2k{-}1)$-dimensional family of $k\times n$ matrices in $\gr(k,n)_{\geq 0}$ from the proof of Lemma \ref{lem:geom_alg_bd} with the shifting operator $\sigma$.
    $\mathcal{Y}$ was comprised of $k\times n$ matrices $Y$ whose rows $Y_j$ are given as \begin{align}
     Y_j = 
    \begin{cases}
       e_1+ y_{1,1}e_2 & j=1,\\
       y_{j,1}e_{j{+}2}+y_{j,2}e_{j{+}3}+e_{j{+}3} & k\geq j>1,
    \end{cases}
    \end{align}
    where $y_{i,j}\in\R_{\geq 0}$ and we consider $e_i$ as the standard basis row-vector of $\R^n$.
    Now take a sequence of partitions $\mathcal{I}= \{I_n\}_{n\geq n_0}$ of $[0,1]$ with $I_{n_0}<I_n<I_{n{+}1}$ in refinement order for all $n$ and where we insert $s_{i_n}$ between the original $t_1$ and $t_{2}$ of the partition $I_{n_0}$, while keeping the other $t_j$'s unchanged. For any $I_n\in\mathcal{I}$ we can define a family $\mathcal{Y}_n$ as above.
    Therefore, we can always find $s_{1},s_{2}\in I_n $ such that $s_{1}\leq t\leq s_{2}$. In turn, this implies that for any $Y_n\in\mathcal{Y}_n$ the pointwise limit, $\lim_{n\rightarrow\infty} Y_nZ$, converges to a $k$-space $A\in\gr(k,k{+}2)$ intersecting $C_{k+1}$ in $\gamma_{k+1}(t)$. 
    This space will also intersect
    the $\operatorname{conv}(\gamma_{k+1}(t_{j}),\gamma_{k+1}(t_{j+1}),\gamma_{k+1}(t_{j+2}))
    $ for all $j\in[k{-}1]+1$ by construction.
    For the cases where $t\in[0,1]$ such that $t_{i}\leq t\leq t_{i+1}$ for $i>1$, we simply use the shift $\sigma$ and consider $\sigma^{i-1}Y$ for $Y\in\mathcal{Y}$.
    Then, using the same family of partitions $\mathcal{I}$ and family of $k\times n$ matrices $\mathcal{Y}_n$, where $(\sigma^{i-1}Y)_n=\sigma^{i-1}Y_n$, we get $A=\lim_{n\rightarrow\infty} (\sigma^{i-1}Y)_nZ \in \gr(k,k{+}2)$ which intersects $C_{k+1}$ in $\gamma_{k+1}(t)$.
    Denote the set of all of these point wise limits $\mathcal{Y}_t\subset\gr(k,k{+}2)$, then we have just shown that $\mathcal{Y}_t\subset \mathcal{V}_t$.
    As $\operatorname{codim}(\widetilde{Z}(\mathcal{Y}))=2$ which equals $\operatorname{codim}(\mathcal{V}_t)=2$, the set $\mathcal{Y}_t$ is Zariski dense in $\mathcal{V}_t$, finishing the proof.
\end{proof}

Equipped with the results from Lemmas \ref{lem:geom_alg_bd} and \ref{lem:limit_of_boundary_strata} we can now prove the main Theorem of this section.

\begin{proof}[Proof of Theorem \ref{thm:algebraic_boundary_of_A_infty}]
    We use the notation from Lemma \ref{lem:limit_of_boundary_strata}.
    Since $\aA_k^\infty$ is the union over all partitions of $[0,1]$, the following map is well defined
    \[
    \varphi:[0,1]\longrightarrow\aA_k^\infty\subset\gr(k,k{+}2)\quad,\quad t\mapsto \mathcal{Y}_t\subset \mathcal{V}_t.
    \]
    For any fixed $t\in[0,1]$ we therefore get an irreducible semialgebraic set $\mathcal{Y}_t$ of codimension $1$ in $\CH(C_{k+1})$.
    Since the domain of $\Psi$ is $1$-dimensional we get a top-dimensional irreducible semialgebraic subset in $\CH(C_{k+1})$. Thus, the image $\varphi([0,1])$ is Zariski dense in $\CH(C_{k+1})$, more succinctly $\partial_a\aA_k^\infty\supset\CH(C_{k+1})$.
    Finally, for any $I_n$ of $[0,1]$ it remains to consider the remaining boundary stratum $\langle A_1\ldots A_k\gamma_{k+1}(1)\gamma_{k+1}(n)\rangle=0$ of the algebraic boundary $\partial_a\aA_{I_n}$, as by Lemma \ref{lem:geom_alg_bd}.
    However, as $t_1=0$ and $t_n=1$ for all partitions, we get $\Sec$ spanned by $\gamma_{k+1}(0)$ and $\gamma_{k+1}(1)$. Therefore, $\partial_a\aA_k^\infty=\CH(C_{k+1})\cup \CH(\Sec)$, where we identified $\langle A_1\ldots A_k\Sec\rangle=0$ with $\CH(\Sec)$ as by Remark \ref{rem:physics_lingo}.
\end{proof}

\section{Stratification}\label{sec:stratification}

The purpose of this section is to systematically study the singularity structure of the algebraic boundary $\partial_a\aA^\infty_k$. To that end, we use tangent spaces of incident varieties to study iterated singularities of Chow hypersurfaces.
In Section \ref{sec:tangent_spaces}, we focus on the relevant incidence varieties. In Section \ref{sec:strata}, we apply these tools to compute the iterated singular loci and give the relevant boundary stratification from the point of view of Positive Geometry. Note, in this section we have $d=k+1$ in an effort to simplify notation.

\subsection{Tangent spaces and incidences}\label{sec:tangent_spaces}

Our discussion here is based on \cite[Chapter 14]{Harris_AG}.
Let $X$ be a smooth irreducible projective variety in $\PP^{d}$ defined by its prime ideal $I=(F_1,\dots,F_r)$, generated by homogeneous polynomials $F_i$ in $\mathbb{C}[z_0,\dots,z_n]$ for all $1\leq i\leq r$. We denote by $\mathbf{T}_xX$ the \emph{projective tangent space} at $x\in X$, defined as the kernel of the Jacobian $(\partial F_i/\partial z_j)_{ij}$ of $I$ at $x$. For the Grassmannian $\gr(k,n)$, we can identify its tangent space at $V$ with $\operatorname{Hom}(V,\mathbb{C}^{n}/V)$; to make this identification obvious, we will use a non-bold typeface, i.e. $T_V\gr(k,n)=\operatorname{Hom}(V,\mathbb{C}^{n}/V)$. 
In particular, for any $p\in \PP^n$, we get $T_p\PP^n = \Hom(p,\C^{n+1}/p)$, where we think of $p$ as a $1$-dimensional subspace of $\C^{n+1}$.
For the remainder of this section we are interested in the case where $X$ is the rational normal curve $C_d\subset \PP^d$ of degree $d$.

\begin{definition}\label{def:divisor}
    Let $C\subset \PP^d$ be a curve and $\CH(C)\subset \gr(d-1,d{+}1)$ be its Chow hypersurface. To a space $V\in \ch(C)$, we associate a divisor $D_V\in \Div(C)$ defined as the intersection divisor $H.C$ for a general hyperplane $H\subset \PP^d$ containing $V$. Concretely, if $H$ is defined by the linear form $\ell$, then the restriction of $\ell$ to $C$ is a section of the line bundle $\sO(1)\vert_C$ and $D_V$ is the (zero) divisor of this section. The \emph{degree} $\deg(D)$ of a divisor $D = \sum_{p\in C} m_p p$ equals $\Sigma_{p\in C}m_p$.
    We write $S^\ell$ for the symmetric $\ell$-fold product $C^\ell/S_\ell$, the quotient of the Cartesian product modulo the action of the symmetric group $S_\ell$ acting by permutation on the factors. 
\end{definition}

\begin{remark}
    The divisor $D_V\in \Div(C)$ that we just defined is equal to $D_V = \sum_{p\in C} m_p p$, where $m_p$ is the smallest order of vanishing of any linear form $\ell$ vanishing on $V$. 
\end{remark}

For any $\ell\in \N$, we denote by $\secV^\ell(C_d)$ the \emph{variety of $\ell$ secants} in $\grd$, which consists of all subspaces intersecting $C_d$ in $D_V$ such that $\deg(D_V)=\ell.$
Note $\secV^\ell(C_d)=\varnothing$ for any $\ell>d{-}1.$
Also recall that points in the $\ell$-fold symmetric product $S^\ell(C_d)$ are in correspondence with effective divisors on $C_d$ of degree $\ell$, therefore 
we can then define the incidence
\begin{align}\label{eq:incidence}
    \Sigma_\ell=\left\{(p_1+\ldots +p_\ell,V)\:\bigg|\:  \sum_{i=1}^\ell p_i \leq D_V \right\}\subset S^\ell(C_d)\times \gr(d-1,d+1),
\end{align}
with the canonical projections $\pi_1$ and $\pi_2$ into the first and second factor, respectively.
Then we have $\pi_2\left(\Sigma\cap\pi_1^{-1}( S^\ell(C_d))\right)=\secV^\ell(C_d)$. 
Following \cite[Chapter 14]{Harris_AG}, we describe the tangent space to $\Sigma_\ell$ in terms of complex analytic geometry. 
Any tangent vector to $\Sigma_\ell$ at $(p_1+\ldots+p_\ell,V)$ is equal to $\sigma'(0)$, as an element in the tangent space of $ S^\ell(C_d)\times\grd$, i.e.
$\prod_{i=1}^\ell\operatorname{Hom}(p_i,\C^{d+1}/p_i)\times \operatorname{Hom}(V,\C^{d+1}/V)$, for a suitable holomorphic arc $\sigma(t) = (p_1(t)+\ldots+p_\ell(t),V(t))$.
Here, $p_i(t)$ is a holomorphic arc in $C_d$ for each $i\leq \ell$ and $V(t)$ is a holomorphic arc in $\grd$ such that $p_i(t)\in V(t)$ for all $t$ and $i\leq\ell$, and with $p_i(0)=p_i$ and $V(0)=V$. Write $\sigma'(0) = (\theta_1,\ldots,\theta_\ell,\phi)$.
First consider a subspace $V\in\secV^\ell(C_d)$ that meets $C_d$ in $\ell$ distinct points. Then the containment $p_i(0)=p_i\in V$  implies that the restrictions of $\phi$ to the $1$-dimensional linear space $p_i$ must coincide with $\theta_i$ modulo $V$ i.e.  $\phi\vert_{p_i}=\theta_i+V$ for all $i\leq \ell$.
Therefore, the tangent space of $\Sigma$ is
\begin{align}\label{eq:incidence_tangent_space_generic}
    T_{(p_1+\ldots+p_\ell,V)} \Sigma_\ell=\{(\theta_1,\ldots, \theta_\ell,\varphi)\mid \varphi\vert_{p_i}=\theta_i + V \text{ for all } i\leq\ell\},
\end{align}
as a subspace of $\prod_{i=1}^\ell\operatorname{Hom}(p_i,\C^{d+1}/p_i)\times \operatorname{Hom}(V,\C^{d+1}/V).$
Moreover, we also get an intrinsic characterization of the tangent space of $\secV^\ell(C_d)$ at points with $p_i\neq p_j$ ($i\neq j$) as 
\begin{align}\label{eq:secant_tangent_space}
    T_V\secV^\ell(C_d)=\{\varphi \mid \varphi(p_i)\subset \mathbf{T}_p(C_d)+V \text{ for all } i\leq\ell\}\subset \operatorname{Hom}(V,\C^{d+1}/V).
\end{align}

Next, we deal with multiplicity of intersection points: For $V\in\grd$ with $D_V=\sum_{i=1}^r m_ip_i$ such that $\deg(D_V)=\ell$, its preimage under $\pi_2$ is $(m_1 p_1 + m_2 p_2+\ldots +m_r p_r,V)\in\Sigma_\ell$, where each $p_i$ appears exactly $m_i$ times.
Geometrically speaking $m_i>1$ means that $V$ will contain the $m_i$-th osculant plane $\mathcal{O}^{(m_i)}(p_i)$. In order to define the $m$-th osculant plane, consider the \emph{$m$-th Gauss map} 
\[
\mathcal{G}^{(m)}:C_d\longrightarrow \gr(m,d+1)
\]
that takes $p\in C_d$ to the row span of the matrix of higher derivatives  $(v,v^{(1)},\ldots,v^{(m-1)})$, where $v(t)$ is an holomorphic arc around $p\in C_d$, and $v^{(i)}$ denotes the $i$-th derivative of that arc. 
The image of $p\in C_d$ under $\mathcal{G}^{(m)}$ is then called the \emph{$m$-th osculant plane} $\mathcal{O}^{(m)}(p)$.
Therefore, for such a $V$ as above, the containment $\sum_{i=1}^\ell p_i\leq D_V$ implies that $\mathcal{O}^{(m_i)}(p_i)\subset V$, which in turn implies that $\varphi\vert_{\mathcal{O}^{(m_i)}(p_i)}=\theta_{i}+V$, where $\theta_{i}$ is the tangent vector of the holomorphic arc $v_i(t)$ in $\gr(m_i,d{+}1)$ with $v_{i}(t)\subset V(t)$ for all $t$ and $v_i(0)=\mathcal{O}^{(m_i)}(p_i)$.
Thus, the tangent space to $\Sigma_\ell$ at the unique point in $\pi_2^{-1}(V)=(D,V)$ with $D = \sum_{i=1}^r m_i p_i$ is
\begin{align}\label{eq:tangent_space_higher_order}
    T_{(D,V)}\Sigma_\ell=\{ {(\theta_1,\ldots,\theta_r,\varphi)\mid \varphi\vert_{\mathcal{O}^{(m_i)}(p_i)}=\theta_i+V \text{ for all } i\leq r}\},
\end{align}
where the vectors $\theta_i$ are in $\Hom(\mathcal{O}^{(m_i)}(p_i),\C^{d+1}/\mathcal{O}^{(m_i)}(p_i))$.

The linear spans of multiple osculating planes to the rational normal curve always have the expected dimension by a generalization of Vandermonde matrices. 
\begin{remark}\label{rem:generalized_vandermond}
Fix a partition $I_n$ of $[0,1]$ and $r=(r_1,\ldots,r_n)\in\N^n$ with $r_1 + \ldots + r_n = d+1$. We define the \emph{generalized Vandermonde matrix} $V_r \in \C^{(d+1)\times (d+1)}$ as
\[
V_r=(\gamma^{(0)}_d(t_1),\ldots, \gamma^{(r_1)}_d(t_1),\ldots,\gamma^{(0)}(t_n),\ldots, \gamma^{(r_n)}_d(t_n)),
\]
where $\gamma_d^{(k)}(t)$ denote the $k$-th derivative of $\gamma_d(t)$.
Geometrically, speaking this Vandermonde matrix is constructed from the $r_i$-osculant planes $\mathcal{O}^{(r_i)}(\gamma_d(t_i))$ for all $i\leq n$.
The determinant of this matrix is
\[
\label{eq:generalized_vandermonde}
    \det(V_r) = \prod_{i=1}^n(1!\ldots r_i!)\prod_{j>i}(t_j-t_i)^{(r_j+1)(r_i+1)},
\]
see e.g. \cite{generalized_van_der_monde}. 
In particular, $\det V_r=0$ if and only if $t_j=t_i$ for some $i\neq j$. This result implies that osculant planes to distinct points of the rational normal curve are transversal (and in particular do not intersect as long as $\sum m_i < d+1$).
\end{remark}

\begin{proposition}\label{prop:smooth_tangent}
    For $\ell\leq d{-}1$, the incidence $\Sigma_\ell$  from \eqref{eq:incidence}  is smooth of codimension $2\ell$.
\end{proposition}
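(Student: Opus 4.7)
The plan is to realize $\Sigma_\ell$ as a Grassmannian bundle over the smooth base $S^\ell(C_d)$ via the first projection $\pi_1$; smoothness and the codimension count would then follow at once. For a divisor $D = \sum_{i=1}^{r} m_i p_i$ with $\sum_i m_i = \ell$, the fiber $\pi_1^{-1}(D)$ consists of $(d{-}1)$-planes $V$ containing each osculating plane $\mathcal{O}^{(m_i)}(p_i)$, equivalently containing their sum $W_D := \sum_{i=1}^{r} \mathcal{O}^{(m_i)}(p_i) \subset \C^{d+1}$. By Remark \ref{rem:generalized_vandermond}, osculating planes at distinct points of $C_d$ are transversal, so $\dim W_D = \sum_i m_i = \ell$ for \emph{every} divisor $D$ of degree $\ell$, and the hypothesis $\ell \leq d-1$ ensures $W_D \subsetneq \C^{d+1}$. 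Hence $\pi_1^{-1}(D)$ is isomorphic to $\gr(d-1-\ell, d+1-\ell)$, a smooth projective variety of dimension $2(d-1-\ell)$, independent of the partition type of $D$.

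Next I would upgrade the assignment $D \mapsto W_D$ to a rank-$\ell$ algebraic subbundle $\mathcal{W} \subset \sO_{S^\ell(C_d)}^{\oplus(d+1)}$. Local sections for $\mathcal{W}$ arise from the columns of the generalized Vandermonde matrix in Remark \ref{rem:generalized_vandermond}, grouped according to the multiplicities $(m_1,\ldots,m_r)$, and the non-vanishing of the Vandermonde determinant ensures these columns remain linearly independent as one crosses between strata of $S^\ell(C_d)$ of differing partition type. With $\mathcal{W}$ in hand, $\Sigma_\ell$ is canonically identified with the relative Grassmannian of $(d{-}1{-}\ell)$-planes in $\sO^{\oplus(d+1)}/\mathcal{W}$ over $S^\ell(C_d)$. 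Since the base is smooth (the symmetric power of a smooth curve is smooth) and Grassmannian bundles over smooth bases are smooth, $\Sigma_\ell$ is smooth of dimension $\ell + 2(d-1-\ell) = 2d-2-\ell$, giving codimension $\ell + 2(d-1) - (2d-2-\ell) = 2\ell$ in $S^\ell(C_d) \times \grd$.

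The main obstacle I anticipate is verifying the rank-constancy of $\mathcal{W}$ across the stratification of $S^\ell(C_d)$ by partition type, where some collisions of points are compensated by jumps in osculant order; Remark \ref{rem:generalized_vandermond} is precisely the input that keeps $\dim W_D$ constantly $\ell$. A hands-on alternative bypassing the bundle construction is to use the tangent space description \eqref{eq:tangent_space_higher_order} directly: the equations $\varphi|_{\mathcal{O}^{(m_i)}(p_i)} = \theta_i + V$ determine $\varphi$ on the $\ell$-dimensional subspace $W_D \subset V$, leaving $\Hom(V/W_D,\C^{d+1}/V)$ of dimension $2(d-1-\ell)$ free for $\varphi$, while the $\theta_i$'s contribute $\sum_i m_i = \ell$ further parameters, so the tangent space has constant dimension $2d-2-\ell$ at every $(D,V) \in \Sigma_\ell$, which together with irreducibility of $\Sigma_\ell$ yields smoothness.
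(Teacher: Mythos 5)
Your primary route --- exhibiting $\Sigma_\ell$ as a relative Grassmannian over $S^\ell(C_d) \cong \PP^\ell$ --- is correct and genuinely different from the paper's argument. The paper does not invoke a bundle structure: it computes $\dim\Sigma_\ell$ by fixing $D$, identifying the fiber $\pi_1^{-1}(D)$ with $\gr(d{-}\ell{-}1,d{-}\ell{+}1)$ via the generalized Vandermonde determinant, and then, separately, establishes smoothness by a linear-algebra count on the tangent space \eqref{eq:tangent_space_higher_order}: each osculating condition $\varphi\vert_{\mathcal{O}^{(m_i)}(p_i)} = \theta_i + V$ pins down $m_i$ columns of the $2\times(d{+}1)$ matrix for $\varphi$, giving $2m_i$ independent conditions and hence a total of $2\ell$, matching the codimension. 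That is essentially your ``hands-on alternative'' at the end. Your bundle approach is cleaner in that smoothness and the dimension formula drop out simultaneously, but, as you anticipate, the substantive work is pushed into showing that $D \mapsto W_D = \operatorname{span}_i \mathcal{O}^{(m_i)}(p_i)$ is an algebraic subbundle of $\sO_{S^\ell(C_d)}^{\oplus(d+1)}$ of constant rank $\ell$; the Vandermonde determinant gives rank-constancy on each partition stratum, but gluing across the stratification still requires an argument. The slick way to supply it is via the universal divisor $\Delta\subset C_d\times S^\ell(C_d)$: $W_D$ is the annihilator of $H^0\bigl(C_d,\sO(1)(-D)\bigr)\subset H^0(C_d,\sO(1))$, and since $\deg\sO(1)(-D)=d-\ell\geq 0$ the $h^1$ vanishes uniformly, so cohomology and base change makes $(p_2)_*\bigl(p_1^*\sO(1)(-\Delta)\bigr)$ a rank-$(d{+}1{-}\ell)$ subbundle of the trivial bundle whose fiberwise annihilator is exactly $\mathcal{W}$. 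With that supplied, your argument closes; the paper's pointwise tangent-space count avoids the bundle machinery at the cost of re-deriving the dimension separately. One small caveat: in your hands-on variant, the statement that ``the $\theta_i$'s contribute $\sum_i m_i=\ell$ parameters'' is true in aggregate because $\dim T_D S^\ell(C_d)=\ell$, but attributing $m_i$ parameters to each $\theta_i$ individually is misleading --- each $\theta_i$ is a tangent direction along a single arc in $C_d$ --- so it is safer to count the $\ell$ parameters as $\dim T_D S^\ell(C_d)$ directly.
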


\begin{proof}
Fix $\ell\leq d-1$. 
Let $\pi_1$ be the projection from the incidence $\Sigma_\ell$ into the first factor $ S^\ell(C_d)$ and $\pi_2$ the projection into the Grassmannian.
First, we determine the dimension of $\Sigma_\ell$.
The variety $ S^\ell(C_d)\times \gr(d{-}1,d{+}1)$ has dimension $\ell+2(d-1)$.
Fix $D = p_1+\ldots + p_\ell \in S^\ell(C_d)$ and consider 
\[
\mathcal{V}_D=\left\{V\in\gr(d{-}1,d{+}1)\:\bigg|\: \sum_{i=1}^\ell p_i\leq D_V\right\}.
\]
Denote by $O$ the complex vector space spanned by all the osculant planes $\mathcal{O}_i=\mathcal{O}^{(m_i)}(p_i)$; it then follows from the generalized Vandermonde determinant \eqref{eq:generalized_vandermonde}, that $\dim(O) = \ell$.
Therefore, we have that $\mathcal{V}_D\cong \gr(d{-}\ell{-}1,d{-}\ell{+}1)$, which is $2(d-1)-2\ell$ dimensional.
Since $D$ varies in an $\ell$-dimensional variety, we conclude $\dim(\Sigma_\ell)=\dim(\mathcal{V}_D)+\ell$; thus
$\operatorname{codim}(\Sigma_{\ell})=2\ell$.

To show that $\Sigma_\ell$ is smooth recall the notation for the tangent space \eqref{eq:tangent_space_higher_order} for $V\in\grd$ with $D_V=\sum_{i=1}^rm_ip_i$, all $p_i$ distinct and $\deg(D_V)=\ell$:
\[
T_{(D,V)}\Sigma_\ell=\{ {(\theta_1,\ldots,\theta_r,\varphi)\mid \varphi\vert_{\mathcal{O}_i}=\theta_i+V\quad\forall i\leq r}\}.
\]
Then, the condition $\mathcal{O}_i\subset V$ for the incidence $\Sigma_\ell$ implies $\varphi\vert_{\mathcal{O}_i}=\theta_i + V$. 
Since the osculant planes are transveral (see Remark~\ref{rem:generalized_vandermond}) and $\sum m_i \leq d$, the conditions $\varphi\vert_{\mathcal{O}_i}=\theta_i+V$ on the linear map $\varphi$ are independent. To count the conditions for each $i$, we choose any basis of $\mathcal{O}_i$.
Then, $\varphi\vert_{\mathcal{O}_i}=\theta_i + V$ fixes the images of $\varphi(\mathcal{O}_i)$, that is to say, it fixes $m_i$ columns of the $2\times (d{+}1)$ matrix ($\dim(\C^{d+1}/V) = 2$) representing $\varphi$ with respect to the chosen basis. Thus placing $2m_i$ linear conditions on elements in $T_{(D,V)}\Sigma_\ell$. In total we therefore have $2\ell$ linear constraints for a linear map $\varphi \in \Hom(V,\C^{d+1}/V)$ to be in $T_{(D,V)}( S^\ell(C_d)\times \grd)$.
Since we have shown above that $\operatorname{codim}(\Sigma_\ell)=2\ell$, and since $D_V$ was arbitrary, we proved that $\Sigma_\ell$ is smooth everywhere.
\end{proof}

\subsection{Strata}\label{sec:strata}

We now turn to the study of the strata of the algebraic boundary $\partial_a\aA^\infty_{d-1}$.
Recall that the variety of $\ell$-secants $\secV^\ell(C_d)$ was the subset of $\grd$ consisting of planes $V$ with $\deg(D_V)\geq\ell$.
Moreover, for a projective variety $X$ we denote its \emph{singular locus} by $\Sing(X)$, which can be computed via a Jacobian criterion, see \cite[Chapter 10]{IdealsVarieties} for details.
We start by considering the irreducible component of $\partial_a\aA^\infty_{d-1}$ which stems from the Chow hypersurface of the rational normal curve $C_{d}$.

\begin{proposition}\label{prop:it_sing_locus_chow}
    For any $\ell$ in $\N$ the singular locus of $\secV^\ell(C_d)$ is given by
    \[
    \operatorname{Sing}(\secV^\ell(C_d))=\secV^{\ell+1}(C_{d}).
    \]
\end{proposition}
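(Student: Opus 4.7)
The plan is to establish both inclusions using the proper birational projection $\pi_2 \colon \Sigma_\ell \to \secV^\ell(C_d)$, where $\Sigma_\ell$ is smooth of dimension $2(d-1) - \ell$ by Proposition~\ref{prop:smooth_tangent} and $\pi_2$ restricts to an isomorphism over the open locus on which $\deg D_V = \ell$.

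For the inclusion $\secV^{\ell+1}(C_d) \subseteq \Sing(\secV^\ell(C_d))$, I would fix a generic $V_0 \in \secV^{\ell+1}(C_d)$ so that $D_{V_0} = p_1 + \cdots + p_{\ell+1}$ is a sum of $\ell+1$ distinct points. The fiber $\pi_2^{-1}(V_0)$ then contains $\ell+1$ distinct points $(D_j, V_0)$ with $D_j = \sum_{i \neq j} p_i$. If $V_0$ were a smooth (hence normal) point of the irreducible variety $\secV^\ell(C_d)$, Zariski's Main Theorem applied to the proper birational morphism $\pi_2$ with normal source $\Sigma_\ell$ would force $\pi_2$ to be a local isomorphism at $V_0$, contradicting the existence of $\ell+1 \geq 2$ distinct preimages. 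Hence $V_0 \in \Sing(\secV^\ell(C_d))$, and closedness of the singular locus together with density of such generic $V_0$ in $\secV^{\ell+1}(C_d)$ extends the inclusion to all of $\secV^{\ell+1}(C_d)$.

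For the reverse inclusion I would show that $U \coloneqq \secV^\ell(C_d) \setminus \secV^{\ell+1}(C_d)$ lies in the smooth locus. For $V \in U$ one has $\deg D_V = \ell$, so $D_V$ is its own unique degree-$\ell$ subdivisor and $\pi_2^{-1}(V) = \{(D_V, V)\}$. Since the intersection $V \cap C_d$ has constant length $\ell$ over $U$, the universal intersection $\{(V,p) : p \in V \cap C_d\} \to U$ is finite and flat, giving a morphism $\delta \colon U \to S^\ell(C_d)$, $V \mapsto D_V$. The induced section $\sigma \colon U \to \Sigma_\ell$, $V \mapsto (D_V, V)$, is mutually inverse to $\pi_2|_{\pi_2^{-1}(U)}$, so $U \cong \pi_2^{-1}(U)$ is smooth, and $\Sing(\secV^\ell(C_d)) \subseteq \secV^{\ell+1}(C_d)$ follows. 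The main obstacle will be verifying the hypotheses for Zariski's Main Theorem in the first step — namely that $\pi_2$ is genuinely finite and birational so that multiple preimages obstruct local normality at $V_0$ — while the flatness argument in the second step is standard once one notes the constant Hilbert polynomial $\ell$ on $U$.
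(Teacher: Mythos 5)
Your proof is correct and rests on the same central tools as the paper's: the projection $\pi_2 \colon \Sigma_\ell \to \secV^\ell(C_d)$ from the smooth incidence of Proposition~\ref{prop:smooth_tangent}, combined with Zariski's Main Theorem applied to the multiple disjoint preimages over $\secV^{\ell+1}(C_d)$. Where you diverge is the smoothness direction. The paper restricts to the open locus where $D_V$ is a sum of $\ell$ distinct points, reads off the injectivity of $d\pi_2$ from the tangent-space formula \eqref{eq:tangent_space_higher_order}, and then invokes \cite[Corollary~14.10]{Harris_AG}. You instead work over the full open set $U = \secV^\ell(C_d)\setminus\secV^{\ell+1}(C_d)$, produce the morphism $\delta\colon U\to S^\ell(C_d)$ by flatness of the universal intersection (constant fibre length $\ell$ over the reduced base $U$), and observe that the resulting section $\sigma=(\delta,\operatorname{id})$ is a two-sided inverse of $\pi_2\vert_{\pi_2^{-1}(U)}$, so that $U$ inherits smoothness from $\Sigma_\ell$. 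Your route bypasses the tangent-space bookkeeping and has the genuine advantage of covering, in one stroke, the subspaces $V$ with $\deg D_V = \ell$ whose divisor $D_V$ has repeated points --- a case the paper's argument, which explicitly assumes the $p_i$ distinct, does not address head-on. The only detail worth spelling out is that the scheme-theoretic equality $\sigma(U)=\pi_2^{-1}(U)$ (not merely a bijection on closed points) uses that $\pi_2^{-1}(U)$ is reduced, which holds since it is open in the smooth $\Sigma_\ell$; with that in place the isomorphism $U\cong\pi_2^{-1}(U)$ is immediate.
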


\begin{proof}
    Fix $\ell < d-1$.
    We recall the setup for the incidence $\Sigma_\ell$ from \eqref{eq:incidence} and its tangent space for ease of readability:
    \[
    \Sigma_\ell=\left\{(p_1+\ldots +p_\ell,V)\:\bigg|\:  \sum_{i=1}^\ell p_i\leq D_V\right\}\subset  S^\ell(C_d)\times \gr(d{-}1,d{+}1)
    \]
    We have, by construction
    \[
    \operatorname{Sec}^{\ell}(C_d)=
    \pi_2\left(\Sigma_\ell\cap \pi_1^{-1}\left( S^\ell(C_d)\right)\right).
    \]
    Also recall that by \eqref{eq:tangent_space_higher_order} the tangent space of $\Sigma_\ell$ at $(D,V)$ for $V\in\grd$ such that $D_V=\sum_{i=1}^rm_ip_i = D$ and $\deg(D_V)=\ell$ and $\pi_2^{-1}(V)=(D,V)$ is given as 
    \[
    T_{(D,V)}\Sigma_\ell=\{ {(\theta_1,\ldots,\theta_r,\varphi)\mid \varphi\vert_{\mathcal{O}^{(m_i)}(p_i)}=\theta_i+V\quad\forall i\leq r}\},
    \]
        where the $\theta_i$ are elements in $\Hom(\mathcal{O}^{(m_i)}(p_i),\C^{d+1}/\mathcal{O}^{(m_i)}(p_i))$.
    Then, as the symmetric product $S^\ell(C_d) \cong \PP^\ell$ of the rational normal curve $C_d$ is smooth, see \cite[Proposition 10.6]{and_all_that},
    and as we further assume that all $p_i$ are distinct, we get a unique smooth point in $\Sigma_{\ell}\cap \pi_1^{-1}\left( S^\ell(C_d)\right)$ as the preimage over $V$.
    It therefore follows that
    \[\pi_2:\Sigma_{\ell}\cap\pi_1^{-1}( S^\ell(C_d))\longrightarrow \secV^{\ell}(C_d)\subset \gr(d{-}1,d{+}1)
    \]
    is one-to-one over $(D,V)$.
    To characterise the smooth points, we want to make use of \cite[Corollary 14.10]{Harris_AG}, so we need to understand when the differential map induced by $\pi_2$ is injective.
    Observe that we just showed that if $(\theta_1,\ldots,\theta_\ell,0)\in T_{(D,V)}\Sigma_\ell$, then by $\varphi\vert_{\mathcal{O}^{(m_i)}(p_i)}=\theta_i+V$, we must have $V\subset \mathcal{O}^{(m_i)}(p_i)$, this gives us a contradiction by dimensions.
    Therefore, $\pi_2$ has injective differential, and it follows from \cite[Corollary 14.10]{Harris_AG} that $\secV^{\ell}(C_d)$ is smooth at $V$, as chosen above.
    It remains to show that every smooth point of $\operatorname{Sing}^{\ell}(\CH(C_d))$ is of this form. To that end, we employ Zariski's weak main theorem, \cite[Proposition 16.8]{Harris_AG}.
    Then, it suffices to note that any point $W$ in $\Sing^{\ell-1}(\CH(C_d))$ such that $\deg(D_W)>\ell$ will have at least two distinct (disconnected) preimages $\pi_2^{-1}(W)$, and thus be a singular point. Since $\ell$ and $D_V$ were arbitrary this finishes the proof.
    If $\ell\geq d-1$, then $\secV^\ell(C_d)$ is smooth and as $\secV^{\ell+1}(C_d)=\varnothing$, the claim follows immediately for all $\ell$.
\end{proof}

In the proof of the preceeding Proposition~\ref{prop:it_sing_locus_chow} we actually determine the tangent space to the variety of $\ell$-secants.
\begin{corollary}\label{cor:SecL_tangent_space}
    Let $V\in\grd$ with $D_V=\sum_{i=1}^rm_ip_i$ where all $p_i$ distinct and $\deg(D_V)=\ell$.
    Then, $\secV^\ell(C_d)$ is smooth at $V$ with
    \[
        T_V\secV^\ell(C_d)=\{\varphi\in\operatorname{Hom}(V,\C^{d+1}/V)\mid \varphi(\mathcal{O}^{(m_i)}(p_i))\subset \mathcal{O}^{(m_i+1)}(p_i)+V \quad\forall i\leq r\}. \qedhere
    \]
\end{corollary}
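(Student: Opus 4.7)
The plan is to read the tangent space off from the proof of Proposition~\ref{prop:it_sing_locus_chow} by pushing forward the tangent space of $\Sigma_\ell$. Smoothness at $V$ is immediate: since $\deg(D_V) = \ell$ we have $V \notin \secV^{\ell+1}(C_d) = \Sing(\secV^\ell(C_d))$, so by Proposition~\ref{prop:it_sing_locus_chow} the variety $\secV^\ell(C_d)$ is smooth at $V$. Moreover, in that proof we already saw that over $V$ the projection $\pi_2 : \Sigma_\ell \cap \pi_1^{-1}(S^\ell(C_d)) \to \secV^\ell(C_d)$ is one-to-one with injective differential, and since source and target are smooth of equal dimension this forces $\pi_2$ to be a local isomorphism at $(D_V, V)$. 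Hence $T_V \secV^\ell(C_d) = d\pi_2(T_{(D_V, V)}\Sigma_\ell)$, which by \eqref{eq:tangent_space_higher_order} equals
\[
\{\varphi \in \Hom(V, \C^{d+1}/V) : \varphi|_{\mathcal{O}^{(m_i)}(p_i)} = \theta_i + V \text{ for some admissible } \theta_i,\ \forall i \leq r\},
\]
where an \emph{admissible} $\theta_i$ is the velocity at $t=0$ of a holomorphic arc in $\gr(m_i, d+1)$ spanned by $m_i$ points of $C_d$ coalescing at $p_i$---equivalently, the image of the differential of the span map $S^{m_i}(C_d) \to \gr(m_i, d+1)$ at $m_i p_i$.

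The key step, and the main obstacle, is to identify this admissible subspace explicitly as $\Hom(\mathcal{O}^{(m_i)}(p_i),\, \mathcal{O}^{(m_i+1)}(p_i)/\mathcal{O}^{(m_i)}(p_i))$. I would verify this by a local Taylor computation at the diagonal of $S^{m_i}(C_d)$: choose elementary symmetric coordinates in $\epsilon_1,\ldots,\epsilon_{m_i}$ near the diagonal at $p_i = \gamma_d(s_0)$ and expand the span of $\gamma_d(s_0 + \epsilon_1),\ldots,\gamma_d(s_0 + \epsilon_{m_i})$ via divided differences, which by the generalized Vandermonde structure of Remark~\ref{rem:generalized_vandermond} gives a smooth $\gr(m_i, d+1)$-valued function of these symmetric coordinates. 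Differentiating at the diagonal, one finds that each component of the deformation contributes only multiples of $\gamma_d^{(m_i)}(s_0)$ modulo $\mathcal{O}^{(m_i)}(p_i)$, landing in the claimed $m_i$-dimensional subspace; matching dimensions with $T_{m_i p_i} S^{m_i}(C_d)$ then shows the span map is immersive at the diagonal and its image is exactly this subspace.

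Granted the admissible characterization, the condition $\varphi|_{\mathcal{O}^{(m_i)}(p_i)} = \theta_i + V$ with admissible $\theta_i$ translates to $\varphi(\mathcal{O}^{(m_i)}(p_i)) \subset \mathcal{O}^{(m_i+1)}(p_i) + V$ (mod $V$). One inclusion follows by reducing admissible $\theta_i$ modulo $V$. For the converse, transversality of osculating planes (Remark~\ref{rem:generalized_vandermond}) together with $\deg(D_V) = \ell$ forces $\mathcal{O}^{(m_i+1)}(p_i) \cap V = \mathcal{O}^{(m_i)}(p_i)$, so the natural reduction map $\Hom(\mathcal{O}^{(m_i)}(p_i), \mathcal{O}^{(m_i+1)}(p_i)/\mathcal{O}^{(m_i)}(p_i)) \to \Hom(\mathcal{O}^{(m_i)}(p_i), (\mathcal{O}^{(m_i+1)}(p_i)+V)/V)$ is an isomorphism between $m_i$-dimensional spaces, and any $\varphi$ satisfying the condition lifts to an admissible $\theta_i$. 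A final codimension check ($\sum m_i = \ell = \operatorname{codim}\secV^\ell(C_d)$) cross-confirms the stated description of $T_V\secV^\ell(C_d)$.
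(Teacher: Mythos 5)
Your overall strategy matches the paper's intent: smoothness follows from Proposition~\ref{prop:it_sing_locus_chow}, and one wants to read off the tangent space by noting that $\pi_2$ is a local isomorphism over $V$ and pushing forward $T_{(D_V,V)}\Sigma_\ell$. But the ``key step'' you flag really is the obstacle, and your proposed identification of the admissible $\theta_i$ is false. The image of the differential of the span map $S^{m_i}(C_d)\to\gr(m_i,d+1)$ at $m_ip_i$ is \emph{not} contained in $\Hom(\mathcal{O}^{(m_i)}(p_i),\mathcal{O}^{(m_i+1)}(p_i)/\mathcal{O}^{(m_i)}(p_i))$. Already for $m_i=2$, consider the holomorphic arc in $S^2(C_d)$ that fixes the first elementary symmetric coordinate and moves the second: $t\mapsto \{\gamma_d(s_0+u),\gamma_d(s_0-u)\}$ with $t=u^2$. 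Its span is
$\operatorname{span}\bigl(\gamma_d(s_0)+\tfrac{t}{2}\gamma_d''(s_0)+O(t^2),\ \gamma_d'(s_0)+\tfrac{t}{6}\gamma_d'''(s_0)+O(t^2)\bigr)$,
so the resulting tangent vector sends $\gamma_d'(s_0)\mapsto\tfrac{1}{6}\gamma_d'''(s_0)\bmod\mathcal{O}^{(2)}(p_i)$, which does not lie in $\mathcal{O}^{(3)}(p_i)/\mathcal{O}^{(2)}(p_i)$. If you carry out the Taylor computation you sketch, you will find exactly this: deformations along the higher elementary symmetric coordinates produce derivatives of $\gamma_d$ strictly beyond order $m_i$, not just multiples of $\gamma_d^{(m_i)}(s_0)$.

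In fact the trouble is not only with the argument: the displayed tangent-space formula is false whenever some $m_i>1$, so no proof of the statement as written can succeed. Take $d=3$, let $V$ be the tangent line to the twisted cubic $C_3$ at $\gamma_3(0)$, so $D_V=2\gamma_3(0)$ and $\ell=2$. In the dual polynomial picture $V^\perp=\operatorname{span}(t^2+c_0+c_1t,\ t^3+d_0+d_1t)$, and $\secV^2(C_3)$ is where these two polynomials share a degree-$\geq2$ factor; linearizing the rank-$\leq1$ condition on the B\'ezout matrix at the origin gives $T_V\secV^2(C_3)=\{d_0=0,\ d_1=c_0\}$. Translating to $\Hom(V,\C^4/V)$ with $V=\operatorname{span}(e_0,e_1)$, this is the set of $\varphi$ with $\varphi(e_0)\equiv\alpha e_2$ and $\varphi(e_1)\equiv\beta e_2+\alpha e_3 \pmod V$. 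The formula in the corollary would instead require $\varphi(V)\subset(\mathcal{O}^{(3)}(\gamma_3(0))+V)/V=\C e_2\bmod V$, i.e. $\{d_0=0,\ d_1=0\}$; these are two distinct $2$-planes. (Note that the formula is correct when every $m_i=1$, matching~\eqref{eq:secant_tangent_space}; the true condition for $m_i>1$ is not a single subspace containment but a Toeplitz-type recursion linking the images of $\gamma_d^{(j)}(p_i)$ for $j<m_i$.) The paper's own justification is the one-line remark preceding the corollary and also leaves the constraint on the $\theta_i$ unexamined, so your difficulty here points at a genuine issue with the statement itself.
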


In light of Remark \ref{rem:bezout_matrix} we have another immediate consequence.
\begin{corollary}
    Let $f$ and $g$ be the univariate polynomials as in Equation \eqref{eq:resultant_poly}. Then, the $\ell$-th iterated singular locus $\Sing^\ell(\CH(C_d))$ is the set of codimension $2$ subspaces, where $f$ and $g$ have precisely $\ell+1$ common roots, counting with multiplicity.
\end{corollary}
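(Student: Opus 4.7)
The plan is to combine Proposition~\ref{prop:it_sing_locus_chow}, which identifies the iterated singular loci of $\CH(C_d)$ with varieties of higher-order secants, with the B\'ezout description from Remark~\ref{rem:bezout_matrix} that realizes $\CH(C_d)$ as the vanishing locus of the resultant of $(f,g)$.

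First I would iterate Proposition~\ref{prop:it_sing_locus_chow}: starting from $\Sing^0(\CH(C_d)) = \CH(C_d) = \secV^1(C_d)$, each application of the proposition yields $\Sing(\secV^{m}(C_d)) = \secV^{m+1}(C_d)$, so by induction $\Sing^\ell(\CH(C_d)) = \secV^{\ell+1}(C_d)$. This is the subvariety of $\grd$ consisting of codimension-$2$ subspaces $V$ whose intersection divisor $D_V$ with $C_d$ satisfies $\deg(D_V) \geq \ell+1$, equivalently whose scheme-theoretic intersection $V \cap C_d$ has length at least $\ell+1$.

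Next, the task reduces to translating $\deg(D_V) \geq \ell+1$ into a statement about common roots of $f$ and $g$. Writing $V = H_x \cap H_y$ as the kernel of the matrix in \eqref{eq:H_kernel}, the scheme $V \cap C_d$ pulled back to $\PP^1$ via $\gamma_d$ is cut out by the ideal $(f,g) \subset \C[t]$. Localizing at each common root $t_0$,
\[
\dim_\C \C[t]/(f,g) \;=\; \sum_{t_0} \min\bigl(\mathrm{ord}_{t_0}(f),\,\mathrm{ord}_{t_0}(g)\bigr),
\]
which is exactly the count of common roots of $f$ and $g$ counted with multiplicity. Equivalently, a generic hyperplane $H = \alpha H_x + \beta H_y$ through $V$ pulls back to the section $\alpha f + \beta g$, whose vanishing order at $t_0$ equals the above minimum for generic $(\alpha,\beta)$, recovering the multiplicities $m_p$ in the description of $D_V$ following Definition~\ref{def:divisor}.

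Combining the two steps yields the claim: $V$ lies in $\Sing^\ell(\CH(C_d))$ iff $f$ and $g$ share at least $\ell+1$ common roots counted with multiplicity, with the ``precisely $\ell+1$'' formulation understood as describing the open dense stratum $\secV^{\ell+1}(C_d) \setminus \secV^{\ell+2}(C_d)$ whose closure is $\Sing^\ell(\CH(C_d))$. No serious obstacle is expected: once the scheme-theoretic interpretation is set up, matching intersection multiplicities with common-root multiplicities is a standard local computation, and the only mildly delicate point is verifying that for generic $(\alpha,\beta)$ the leading terms of $f$ and $g$ at $t_0$ do not cancel, which is a codimension-one condition on $\PP^1_{[\alpha:\beta]}$.
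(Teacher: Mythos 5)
Your argument correctly fills in what the paper presents as an immediate consequence of Proposition~\ref{prop:it_sing_locus_chow} and Remark~\ref{rem:bezout_matrix}: iterating the proposition to get $\Sing^\ell(\CH(C_d)) = \secV^{\ell+1}(C_d)$, then translating $\deg(D_V)$ into the common-root count of $(f,g)$ by identifying $m_{p}=\min(\mathrm{ord}_{t_0}f,\mathrm{ord}_{t_0}g)$ with the multiplicity of the generic linear combination $\alpha f+\beta g$ at $t_0$. Your reading of ``precisely $\ell+1$'' as describing the open dense stratum whose closure is $\secV^{\ell+1}(C_d)$ is also the intended one, so the proposal is correct and takes essentially the same route the paper leaves implicit.
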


In the analysis of the singular loci of the variety of $\ell$-secants, special lines will play a role. Let $i\leq d-1$. We fix the symmetric notations
\begin{align}\label{eq:L0_L1}
 L_{0,i} & = \operatorname{rowspan}
 \begin{pmatrix}
     \gamma_d(1) \\ \gamma_d^{(i)}(0)
 \end{pmatrix}\quad\text{and} \quad 
 L_{1,i} = \operatorname{rowspan}
 \begin{pmatrix}
     \gamma_d(0) \\ \gamma_d^{(i)}(1)
 \end{pmatrix},
\end{align}
where $\gamma_d^{(i)}(t)$ denotes the $i$-th derivative of $\gamma_d$ at $t$.
Define $O_{0,i}\subset \grd$ to be the $2(d{-}i{-}1)$ dimensional variety of $(d{-}1)$-spaces containing the osculant plane $\mathcal{O}^{(i)}(\gamma_d(0))$ and intersecting the line $L_{0,i}$. 
In terms of the variety $\mathcal{V}_{\mathcal{O}^{(i)}}$, the set of subspaces $V\in \grd$ containing $\mathcal{O}^{(i)}(\gamma_d(0))$, we have $O_{0,i}=\CH(L_{0,i})\cap \mathcal{V}_{\mathcal{O}^{(i)}}$.
For any $i<\ell\leq d-1$, the shorthand $O_{0,i}^\ell$ denotes the intersection $O_{0,i}\cap\secV^\ell(C_{d})$, which has dimension $2(d{-}1){-}\ell{-}i{-}1$.
Symmetrically, we write $O_{1,i}^\ell$. 
For $0<i,j<\ell$ and $i+j<\ell$ the intersection $O_{0,i}^\ell\cap O_{1,j}^\ell$
is the $2(d{-}1){-}\ell{-}j{-}i$ dimensional variety in $\grd$ consisting of $(d{-}1)$-spaces containing both $\mathcal{O}^{i-1}(\gamma_d(0))$ and $\mathcal{O}^{j-1}(\gamma_d(1))$.
Therefore, we denote this intersection by $O^\ell(i,j)$.
We can now study the remaining strata of $\partial_a\aA^\infty_k$.

\begin{proposition}\label{prop:sing_CH(C_D)_CH(01)}
For any $\ell\leq d-1$, the singular locus of the intersection $\secV^{\ell}(C_d)\cap \CH(\Sec)$ decomposes as follows
\[
\operatorname{Sing}(\secV^\ell(C_d)\cap \CH(\Sec))=\left(\secV^{\ell+1}(C_d)\cap\CH(\Sec)\right)\:\cup \bigcup_{i=1}^\ell O^\ell_{0,i}\cup O^\ell_{1,i}.
\]
\end{proposition}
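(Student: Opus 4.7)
The plan is to combine tangent-space calculations with a case analysis based on the standard sources of singularity for an intersection. A point $V \in \secV^\ell(C_d) \cap \CH(\Sec)$ is singular in the intersection if and only if one of the following holds: (a) $V$ is a singular point of $\secV^\ell(C_d)$; (b) $V$ is a singular point of $\CH(\Sec)$; or (c) both components are smooth at $V$ but their intersection is non-transverse, which, since $\CH(\Sec)$ is a hypersurface, is equivalent to $T_V\secV^\ell(C_d) \subseteq T_V\CH(\Sec)$. Case (a) contributes $\secV^{\ell+1}(C_d) \cap \CH(\Sec)$ by Proposition~\ref{prop:it_sing_locus_chow}, which is the first stated component. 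Case (b) corresponds to $\Sec \subset V$; for such a $V$ one has $\gamma_d(0) \in V$ and $\gamma_d(1) \in V \cap L_{0,1}$, so $V \in O_{0,1}^\ell$ (and symmetrically $V \in O_{1,1}^\ell$), so this case is already absorbed into the stated components.

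The substance is case (c). First I would compute the tangent space of $\CH(\Sec)$ at a smooth point $V$, i.e.\ one with $V \cap \Sec = \langle q\rangle$ one-dimensional. Running the same incidence-variety argument used in Section~\ref{sec:tangent_spaces} for $\secV^\ell(C_d)$ gives
\[
T_V\CH(\Sec) = \bigl\{\varphi \in \Hom(V, \C^{d+1}/V) \,\bigm|\, \varphi(q) \in (\Sec + V)/V \bigr\}.
\]
For the forward inclusion $O_{0,i}^\ell \subseteq \Sing(\secV^\ell(C_d) \cap \CH(\Sec))$, pick a generic $V \in O_{0,i}^\ell$, so $D_V = i\,\gamma_d(0) + D'$ with $\deg D' = \ell - i$ and $V \cap \Sec = \langle\gamma_d(0)\rangle$. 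By Corollary~\ref{cor:SecL_tangent_space}, every $\varphi \in T_V\secV^\ell(C_d)$ satisfies $\varphi(\gamma_d(0)) \in (\sO^{(i+1)}(\gamma_d(0)) + V)/V$. The condition $V \cap L_{0,i} \neq \emptyset$ reads $\gamma_d^{(i)}(0) \in V + \langle\gamma_d(1)\rangle$, which is precisely the equality of the two one-dimensional subspaces $(\sO^{(i+1)}(\gamma_d(0)) + V)/V = (\Sec + V)/V$ of $\C^{d+1}/V$. Hence $\varphi(\gamma_d(0)) \in (\Sec + V)/V$, establishing the tangent inclusion. The case $O_{1,i}^\ell$ is symmetric.

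For the reverse direction, suppose $V$ is smooth on both components and tangent inclusion holds. Write $D_V = \sum_j m_j p_j$ with $\sum_j m_j = \ell$ and decompose $V = W \oplus \bigoplus_j \sO^{(m_j)}(p_j)$ with $\dim W = d{-}1{-}\ell$. Writing $q = w + \sum_j o_j$ in this decomposition, the fact that $\varphi|_W$ is unconstrained in $T_V\secV^\ell(C_d)$ forces $w = 0$, so $q$ lies in $\bigoplus_j \sO^{(m_j)}(p_j)$. Using the mutual transversality of osculating planes at distinct points (Remark~\ref{rem:generalized_vandermond}) and $\Sec \cap C_d = \{\gamma_d(0), \gamma_d(1)\}$, one verifies that to produce an irreducible component of the singular locus, $q$ must coincide with an endpoint $\gamma_d(0)$ or $\gamma_d(1)$ equal to some $p_{j_0}$. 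Setting $q = \gamma_d(0) = p_{j_0}$ and $i := m_{j_0} \in \{1, \ldots, \ell\}$, the residual tangent-space identity $(\sO^{(i+1)}(\gamma_d(0)) + V)/V = (\Sec + V)/V$ unfolds to $V \cap L_{0,i} \neq \emptyset$, placing $V$ in $O_{0,i}^\ell$. Symmetrically, $q = \gamma_d(1)$ yields $V \in O_{1,i}^\ell$.

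The main obstacle is this last geometric step in the reverse direction: excluding interior points $q \in \Sec \setminus \{\gamma_d(0), \gamma_d(1)\}$ as sources of new components. Requiring such a $q$ to sit inside $\bigoplus_j \sO^{(m_j)}(p_j)$ forces non-generic linear dependencies between a chord of $C_d$ and its osculating planes at interior points $p_j$, which a dimension count shows cut out subvarieties of codimension strictly greater than that of $O_{0,1}^\ell$ and $O_{1,1}^\ell$ in $\secV^\ell(C_d) \cap \CH(\Sec)$, and hence contribute no new irreducible components to the singular locus.
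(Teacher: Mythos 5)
Your proposal follows essentially the same approach as the paper: both invoke the criterion that a point of $X \cap H$ (with $H$ a hyperplane section) is singular exactly when it is singular in $X$ or the tangent containment $T_V X \subset T_V H$ holds, then analyze this containment via the explicit tangent-space descriptions from Corollary~\ref{cor:SecL_tangent_space} and \eqref{eq:secant_tangent_space}, reducing to the dichotomy of whether the intersection point $q = V \cap \Sec$ coincides with a point $p_i$ of $D_V$ (forcing $q \in \{\gamma_d(0),\gamma_d(1)\}$ since $\Sec \cap C_d$ consists only of these two points), and showing this is equivalent to $V$ meeting the line $L_{0,m_i}$ or $L_{1,m_i}$. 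Your forward direction (equating $(\mathcal{O}^{(i+1)}(\gamma_d(0)) + V)/V$ with $(\Sec + V)/V$) is clean and matches the paper's geometric identification of the point of $L_{0,m_1}$ inside $V$.

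Two small remarks. First, you split off case (b) (the locus $\Sec \subset V$, i.e.\ $\Sing\CH(\Sec)$); the paper folds this into the tangent analysis, but as you correctly observe it lands in $O_{0,1}^\ell \cap O_{1,1}^\ell$, so both routes agree. Second, and more substantively, the step you flag yourself as the ``main obstacle'' --- ruling out interior $q \in \Sec \setminus \{\gamma_d(0),\gamma_d(1)\}$ when $q$ happens to lie in $\bigoplus_j \mathcal{O}^{(m_j)}(p_j)$ --- is also where the paper's argument is thinnest: the paper simply assumes $q$ together with the osculating planes $\mathcal{O}_i$ can be extended to a basis of $V$, which is exactly the condition $w \ne 0$ in your decomposition. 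Your explicit decomposition $V = W \oplus \bigoplus_j \mathcal{O}^{(m_j)}(p_j)$ with the observation that $\varphi|_W$ is unconstrained is a slightly cleaner way to see why this locus needs separate treatment, but your appeal to ``a dimension count shows'' at the end is no more rigorous than the paper's implicit genericity assumption; to actually close it one should check that the two lines $(\mathcal{O}^{(m_j+1)}(p_j)+V)/V$ associated to distinct indices $j$ with $o_j \ne 0$ are distinct for $V$ outside a proper subvariety, which follows from the generalized Vandermonde transversality of Remark~\ref{rem:generalized_vandermond}.

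Overall: same approach as the paper, correct in outline, with the same informally handled residual case.
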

\begin{proof}
    For a projective variety $X\subset \PP^n$ and a hyperplane $H \subset \PP^n$ we have $p\in \Sing(X\cap H)$ if and only if $p\in\Sing(X)\cap H$ or $T_pX\subset T_pH$, see \cite[Chapter 9]{IdealsVarieties}.
    We apply this fact to $X=\secV^\ell(C_d)$ and $H=\CH(\Sec)$.
    So Proposition \ref{prop:it_sing_locus_chow} implies that the intersection $\secV^{\ell+1}(C_d)\cap\CH(\Sec)$ is contained in the singular locus. Moreover, the incidence 
    \[
    \Sigma = \left\{(s,p_1+\ldots+p_\ell,V)\:\bigg|\: s\in V\cap\Sec \land \sum_{i=1}^\ell p_i\leq D_V \right\}\subset \Sec\times S^\ell(C_d)\times \grd
    \]
    implies that this intersection is irreducible.
    Therefore, it remains to study the inclusion $T_V\secV^\ell(C_d)\subset T_V(\CH(\Sec))$ for $V\in\secV^\ell(C_d)\cap \CH(\Sec)$.
    Fix $V\in\grd$ with $D_V=\sum_{i=1}^rm_ip_i$ and $\deg(D_V)=\ell$, and abbreviate $\mathcal{O}_i=\mathcal{O}^{(m_i)}(p_i)$.
    The relevant tangent spaces are 
    \[
        T_V\secV^\ell(C_d)=\{\varphi\in\operatorname{Hom}(V,\C^{d+1}/V)\mid \varphi(\mathcal{O}_i)\subset \mathcal{O}_i+V \text{ for all } 1\leq i\leq r\}
    \]
    by Corollary~\ref{cor:SecL_tangent_space} and from \eqref{eq:secant_tangent_space}
    \[
        T_V\operatorname{CH}(\Sec)=\{\varphi\in\operatorname{Hom}(V,\C^{d+1}/V)\mid \varphi(q)\subset\mathbf{T}_q(\Sec)+V\},
    \]
    where $q$ is an intersection point of $V$ with $\Sec$.
    So we have containment $T_V\secV^\ell(C_d)\subset T_V\CH(\Sec)$ if and only if we have for the subspace
    \[
    \Lambda_V = \operatorname{span}_{\C}\{\varphi\in\operatorname{Hom}(V,\C^{d+1}/V)\mid \varphi(q)\subset\mathbf{T}_q(\Sec)+V \land \varphi(\mathcal{O}_i)\subset \mathcal{O}_i+V\quad\forall i \leq r\},
    \]
    that $\dim(\Lambda_V)=\dim(T_V\secV^\ell(C_d))$.
    Firstly, notice that $D_V$ dictates that the dimension of $\mathcal{O}_i\cap V$ equals $m_i$ for all $i$.
    Now suppose $q$ and the $p_i$'s are all distinct, then we can extend the $q$ and the corresponding osculant planes $\mathcal{O}_i$ to a basis of $V$. 
    The conditions $\varphi(O_i)\subset \mathcal{O}_i+V$ and $\varphi(q)\subset\Sec+V$ then fix, up to scaling, $\ell+1$ columns of the $2\times(d{-}1)$ matrix representing $\varphi$ with respect to the chosen basis.
    Therefore, 
    \[
    \dim(\Lambda_V)=\ell+1+2(d-\ell-2)=2(d-1)-\ell-1,
    \]
    which is strictly smaller than the dimension of $T_V\secV^\ell(C_d)$, which equals $2(d-1)-\ell$ by Proposition \ref{prop:smooth_tangent}.
    So it remains to consider $V$ such that $q=p_i$ for some $i$.
    In this case, we can again extend $q$ and the $p_i$'s to a basis of $V$. However, as $p_i=q$, the inclusions will only fix, up to scaling, $2\ell$ columns of the matrix representing $\varphi$ with respect to this basis.
    Thus, $\dim(\Lambda_V)=2(d-1)-\ell$, and thus in this case $V$ is a singular point.
    Since there are only two points which lie in $\Sec$ and $C_d$, i.e. $\gamma_d(0)$ and $\gamma_d(1)$, we only need to study these two cases explicitly.
    We start with $p=q=\gamma_d(0)$, where wlog. $p=p_1$.
    This implies $\gamma_d(0)\in V$ and $\mathcal{O}^{(m_1)}(\gamma_d(0))\subset V$ additionally we must have $\mathbf{T}_{\gamma_d(0)}(\Sec)+V\subset\mathcal{O}^{(m_1+1)}(\gamma_d(0))+V$. 
    This is the case if and only if the other basis element of $\mathbf{T}_{\gamma_d(0)}(\Sec)$, that is $\gamma_d(1)$ lies in the span of $\mathcal{O}^{(m_1+1)}(\gamma_d(0))$ mod $V$.
    This, in turn, is true if and only if $V$ contains a linear combination of $\gamma_d(1)$ and the basis vectors of $\mathcal{O}^{(m_1+1)}(\gamma_d(0))$. 
    For $m_1=1$, we have actually an equality condition, $\mathbf{T}_{\gamma_d(0)}(\Sec)+V=\mathbf{T}_{\gamma_d(0)}C_d+V$, such that $A = [0:s:t:\ldots:t]\in \PP^d$ must be contained in $V$ for some $[s:t]\in\PP^1$.
    Then it suffices to notice that this point $A$ lies on $L_{0,1}$ from \eqref{eq:L0_L1}.
    If $m_1>1$ and as $\mathcal{O}^{(m_1)}(\gamma_d(0))\subset V$, we can project away from $\mathcal{O}^{(m_1)}(\gamma_d(0))$, under this projection $\gamma_d^{(m_1)}(0)$ becomes $\gamma_{d-m_1}^{(1)}(0)$ and $\gamma_d(1)=\gamma_{d-m_1}(1)$.
    Therefore, again we find that $[0:s:t:\ldots:t]\in \PP^{d-m_1}$ must be contained in the projection of $V$ for some $[s:t]\in \PP^1$, which lifts to $[s:\ldots:s:t:\ldots:t]\in\PP^d$.
    Then, analogously notice that this point lies on $L_{0,m_1}$ from \eqref{eq:L0_L1}.
    Finally, we can use the incidence 
    \[
    \Sigma = \left\{(s,p_1+\ldots+p_\ell,V)\:\bigg|\: s\in V\cap L_{0,1} \land \sum_{i=1}^\ell p_i\leq D_V \right\}
    \subset L_{0,m_1} \times S^\ell(C_d)\times \grd
    \]
    to show that $O_{0,1}^\ell$ is irreducible.
    The reasoning for $p=q=\gamma_d(1)$ is identical. This finishes the proof. 
\end{proof}

\begin{proposition}\label{prop:sing_X_0l}
    For any $j<\ell\leq d-1$ we have
    \[
    \Sing(O^\ell_{0,j})=O^{\ell+1}_{0,j}\cup O^\ell_{0,j+1} \cup O^\ell(j,1),
    \]
    and symmetrically
    \[
    \Sing(O^\ell_{1,j})=O^{\ell+1}_{1,j}\cup O^{\ell}_{1,j+1} \cup O^\ell(1,j)
    \]
\end{proposition}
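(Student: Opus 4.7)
The plan is to adapt the Jacobian-criterion argument of Proposition \ref{prop:sing_CH(C_D)_CH(01)} to the triple intersection $O^\ell_{0,j}=\secV^\ell(C_d)\cap\mathcal{V}_{\mathcal{O}^{(j)}}\cap\CH(L_{0,j})$ in $\grd$. Since $L_{0,j}$ is a line, $\CH(L_{0,j})$ is a hyperplane section of $\grd$ in Pl\"ucker coordinates, so one can iterate the standard fact that $\Sing(Y\cap H)=(\Sing(Y)\cap H)\cup\{V\in Y\cap H\mid T_VY\subset T_VH\}$ with $Y=\secV^\ell(C_d)\cap\mathcal{V}_{\mathcal{O}^{(j)}}$ and $H=\CH(L_{0,j})$. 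For the first piece one uses Proposition \ref{prop:it_sing_locus_chow} together with the smoothness of $\mathcal{V}_{\mathcal{O}^{(j)}}\cong \gr(d{-}1{-}j,d{+}1{-}j)$; for the second piece one writes down the relevant tangent-space condition and case-analyzes when $T_VY\subset T_VH$ holds.

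Explicitly, at a generic $V\in O^\ell_{0,j}$ with $D_V=j\gamma_d(0)+\sum_{i\geq 2}m_ip_i$ of degree $\ell$ and $V\cap L_{0,j}=\{q\}$, the three tangent conditions on $\varphi\in\Hom(V,\C^{d+1}/V)$ are $\varphi(\mathcal{O}^{(j)}(\gamma_d(0)))=0$, $\varphi(\mathcal{O}^{(m_i)}(p_i))\subset\mathcal{O}^{(m_i+1)}(p_i)+V$ (from Corollary \ref{cor:SecL_tangent_space}), and $\varphi(q)\subset L_{0,j}+V$. The three reverse inclusions are then established as follows. For $O^{\ell+1}_{0,j}$: by Proposition \ref{prop:it_sing_locus_chow}, $\secV^{\ell+1}\subset\Sing(\secV^\ell)$, and the remaining conditions from $\mathcal{V}_{\mathcal{O}^{(j)}}$ and $\CH(L_{0,j})$ are transverse to $\secV^\ell$ at a generic point of $O^{\ell+1}_{0,j}$. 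For $O^\ell_{0,j+1}$: here $V\supset\mathcal{O}^{(j+1)}(\gamma_d(0))\supset\mathcal{O}^{(j)}$ and $\gamma_d^{(j)}(0)\in L_{0,j}\cap V$, so $V\in O^\ell_{0,j}$ with intersection point $q=\gamma_d^{(j)}(0)\in V$; the condition $\varphi(q)\subset L_{0,j}+V$ is then automatic and the Jacobian drops rank. For $O^\ell(j,1)$: the hypothesis $\gamma_d(1)\in V$ forces $q=\gamma_d(1)$ to coincide with a point of $D_V$, and the two conditions $\varphi(\gamma_d(1))\subset L_{0,j}+V$ and $\varphi(\gamma_d(1))\subset\mathcal{O}^{(2)}(\gamma_d(1))+V$ merge modulo $V$ exactly as in the end of the proof of Proposition \ref{prop:sing_CH(C_D)_CH(01)}, costing one independent constraint. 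Irreducibility of each of the three strata follows from the obvious incidence-variety constructions in the spirit of Lemma \ref{lem:limit_of_boundary_strata}.

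Conversely, for $V\in O^\ell_{0,j}$ outside all three strata (so $\deg(D_V)=\ell$, $V\not\supset\mathcal{O}^{(j+1)}(\gamma_d(0))$, and $q\neq\gamma_d(1)$), I would extend $\mathcal{O}^{(j)}(\gamma_d(0))$, each $\mathcal{O}^{(m_i)}(p_i)$, and $\langle q\rangle$ to a basis of $V$, and read off the three tangent-space conditions as constraints on disjoint blocks of the $2\times(d{-}1)$ matrix representing $\varphi$. The generalized Vandermonde determinant of Remark \ref{rem:generalized_vandermond} guarantees that under these hypotheses the subspaces $\mathcal{O}^{(j+1)}(\gamma_d(0))$, $\mathcal{O}^{(m_i+1)}(p_i)$, and $L_{0,j}$ lie in general position in $\C^{d+1}$, so the three blocks of constraints are independent and yield the expected tangent-space dimension, establishing smoothness at $V$. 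The symmetric statement for $O^\ell_{1,j}$ is obtained verbatim by swapping $\gamma_d(0)\leftrightarrow\gamma_d(1)$.

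The main obstacle I expect is the bookkeeping in this last step: since $\gamma_d^{(j)}(0)\in L_{0,j}\cap\mathcal{O}^{(j+1)}(\gamma_d(0))$, the osculant and Chow-hypersurface tangent conditions interact naturally at that point, and the hypotheses $V\not\supset\mathcal{O}^{(j+1)}$ and $q\neq\gamma_d(1)$ are precisely what is needed to ensure that $\varphi(q)\subset L_{0,j}+V$ contributes a genuinely new constraint beyond those already imposed by $\mathcal{V}_{\mathcal{O}^{(j)}}$ and $\secV^\ell(C_d)$.
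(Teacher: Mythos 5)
Your route differs from the paper's: the paper proves this by projecting $\C^{d+1}$ away from the osculant plane $\mathcal{O}^{(j)}(\gamma_d(0))$, which identifies $O^\ell_{0,j}$ isomorphically with $\secV^{\ell-j}(C_{\overline d})\cap\CH(\overline{\Sec})$ in a smaller Grassmannian, and then simply quotes Proposition~\ref{prop:sing_CH(C_D)_CH(01)} and lifts the three components back. You instead iterate the Jacobian criterion for a hyperplane section directly on the triple intersection $\secV^\ell(C_d)\cap\mathcal{V}_{\mathcal{O}^{(j)}}\cap\CH(L_{0,j})$, re-running by hand the tangent-space case analysis that the paper's reduction lets it inherit for free.

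The genuine gap in your version is the ``first piece'' $\Sing(Y)\cap H$ with $Y=\secV^\ell(C_d)\cap\mathcal{V}_{\mathcal{O}^{(j)}}$. You claim this is handled by Proposition~\ref{prop:it_sing_locus_chow} plus smoothness of $\mathcal{V}_{\mathcal{O}^{(j)}}$, but that is not enough: $\mathcal{V}_{\mathcal{O}^{(j)}}$ sits entirely inside $\secV^j(C_d)$, so the intersection $\secV^\ell\cap\mathcal{V}_{\mathcal{O}^{(j)}}$ is not transverse (its dimension exceeds the expected one by $j$), and smoothness of one factor does not give $\Sing(X\cap W)=\Sing(X)\cap W$ under a non-transverse intersection. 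Identifying $\Sing(Y)$ really does require recognising $Y$ as a secant variety of a projected rational normal curve — and once you invoke that identification, you have reproduced the paper's key step and the hyperplane $\CH(L_{0,j})$ becomes the projected $\CH(\overline{\Sec})$, at which point the remaining case analysis is literally Proposition~\ref{prop:sing_CH(C_D)_CH(01)} again rather than something to redo. A smaller issue: for $O^\ell_{0,j+1}$ the containment $T_VY\subset T_V\CH(L_{0,j})$ does not follow from $\mathcal{O}^{(j+1)}\subset V$ alone, as you suggest; it needs the extra datum $V\cap L_{0,j+1}\neq\varnothing$ to force $(\mathcal{O}^{(j+2)}+V)/V=(L_{0,j}+V)/V$, so ``automatic'' should be replaced by an argument using the full membership $V\in O^\ell_{0,j+1}$. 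Your converse dimension count at generic points is otherwise sound and matches the codimension $\ell+j+1$, but the blocks of the matrix for $\varphi$ coming from $\mathcal{V}_{\mathcal{O}^{(j)}}$ and from $\secV^\ell$ overlap on $\mathcal{O}^{(j)}$ (the former subsumes the latter there), so ``disjoint blocks'' needs a line of justification before the count is read off.
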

\begin{proof}
    Let $\pi_0:\C^{d+1}\rightarrow\C^{d-j+1}$ be the projection away from the the osculant plane $O = \mathcal{O}^{(j)}(\gamma_d(0))$, and let $\mathcal{V}_{O}=\{V\mid O\subset V\}\subset\gr(d{-}1,d{+}1)$. 
    Then, $\pi_0$ induces the regular map 
    \[
    \overline{\pi}_0:\mathcal{V}_{O}\subset\gr(d{-}1,d{+}1)\rightarrow\gr(d{-}j{-}1,d{-}j{+}1)
    \]
    which is, in fact, an isomorphism.
    Set $\overline{d}=d-j-1$.
    First, notice that $\pi_0(C_d)=(C_{\overline{d}})$, as we can recursively project away from $\gamma_d(0)$, under which $\gamma_{d}^{(1)}(0)$ becomes $\gamma_{d-1}(0)$, and then we repeat $(j{-}1)$-times.
    Secondly, we also get $\overline{\pi}_0(L_{0,j})=\overline{\Sec}\subset \PP^{\overline{d}}$, where $\overline{\Sec}$ is spanned by $\gamma_{\overline{d}}(0)$ and $\gamma_{\overline{d}}(1)$.
    We therefore get the following identification   
    \[
    O^\ell_{0,j}\cong\overline{\pi}_0 (\mathcal{V}_{O}\cap \secV^\ell(C_d)\cap \CH(L_{0,j}))
    =\secV^{\ell-j}(C_{\overline{d}})\cap \CH(\overline{\Sec}).
    \]
    We can now apply Proposition \ref{prop:sing_CH(C_D)_CH(01)} to find the singular locus:
    \[
    \Sing(\secV^{\ell-j}(C_{\overline{d}})\cap \CH(\Sec))
    =(\secV^{\ell-j+1}(C_{\overline{d}})\cap\CH(\overline{\Sec}))\cup \overline{O}^{\ell-j}_{0,1}\cup \overline{O}_{1,1}^{\ell-j},
    \]
    where the $\overline{O}^{\ell-j}_{0,1}$ is analogously defined as in $O^{\ell-j}_{0,1}$, but in the space $\PP^{\overline{d}}$. Same for $\overline{O}_{1,1}^{\ell-j}$.
    Then first notice that 
    \[
    \overline{\pi}_0^{-1} \left(\secV^{\ell-j+1}(C_{\overline{d}})\cap\CH(\overline{\Sec})\right)=O^{\ell+1}_{0,j}.
    \]
    Next, observe that any $\overline{V}\in \overline{O}_{1,1}^{\ell-j}$ contains a line $L$ spanned by $\gamma_{\overline{d}}(0)$ and $\gamma_{\overline{d}}(1)-s\gamma_{\overline{d}}^{(1)}(0)$ for some $s\neq 0 \in\C$.
    Then, $\gamma_{\overline{d}}(0)$ clearly lifts to $\gamma_{d}^{(j)}(0)$ under $\overline{\pi}_0$, and $\gamma_{\overline{d}}^{(1)}(0)$ to $\gamma_{d}^{(j+1)}(0)$.
    Therefore, the preimage $\overline{\pi}_0^{-1}(\overline{V})$ corresponds precisely to a $(d{-}1)$-space containing $\mathcal{O}^{(j+1)}(0)$ and a point on the line $L_{0,j+1}$, and thus $\overline{\pi}_0^{-1}(\overline{O}^{\ell-j}_{0,1})=O^\ell_{0,j+1}$.
    For $\overline{O}_{1,1}^{\ell-j}$ similarly observe that by $\gamma_{\overline{d}}^{(1)}(1)$ lifts to $\gamma_d^{(1)}(1)$ and $\gamma_{\overline{d}}(1)$ of course to $\gamma_d(1)$.
    Therefore, we can identify the preimage $\overline{\pi}_0^{-1}(\overline{O}_{1,1}^{\ell-j})$ with the subset of $\secV^\ell(C_d)$ containing $L_{1,1}$ and $\mathcal{O}^{(j)}(\gamma_d(0))$, which be definition is $O^\ell(j,1)$.
    The proof for $\Sing(O^\ell_{1,j})$ is analogous after a change of basis, interchanging the roles of $\gamma_d(0)$ and $\gamma_d(1)$.
\end{proof}

\begin{proposition}\label{prop:sing_XL0^l}
    Let $0<i,j<d-1$ and $\ell\geq i+j$. Then, for $\ell\leq d-1$ we have
    \begin{align}
    \Sing(O^\ell(i,j))= O^{\ell+1}(i,j)
    \end{align}
    and for $\ell\geq d$ the singular locus is empty.
\end{proposition}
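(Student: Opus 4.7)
The plan is to follow the same template as the proof of Proposition~\ref{prop:sing_X_0l}, namely to project away from both osculant planes simultaneously and thereby reduce the statement to Proposition~\ref{prop:it_sing_locus_chow} on a smaller rational normal curve.

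First I would observe that by Remark~\ref{rem:generalized_vandermond} the osculant planes $O_0 := \mathcal{O}^{(i)}(\gamma_d(0))$ and $O_1 := \mathcal{O}^{(j)}(\gamma_d(1))$ are transversal, so their linear span $O_0 \oplus O_1 \subset \C^{d+1}$ has dimension $i+j$. Set $\overline{d} := d - i - j$. The projection $\pi \colon \C^{d+1} \to \C^{\overline{d}+1}$ away from $O_0 \oplus O_1$ induces a regular isomorphism
\[
\overline{\pi}\colon \mathcal{V}_{O_0 \oplus O_1} \;\longrightarrow\; \gr(\overline{d}-1,\overline{d}+1),
\]
where $\mathcal{V}_{O_0 \oplus O_1}$ denotes the sub-Grassmannian of $\gr(d-1,d+1)$ of subspaces containing $O_0 \oplus O_1$. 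Iteratively projecting away from the successive tangent vectors at $\gamma_d(0)$ and then at $\gamma_d(1)$ sends $C_d$ to the rational normal curve $C_{\overline{d}} \subset \PP^{\overline{d}}$ of degree $\overline{d}$, and under this map the endpoints $\gamma_d(0), \gamma_d(1)$ go to $\gamma_{\overline{d}}(0), \gamma_{\overline{d}}(1)$. This is exactly the same reduction used at the beginning of the proof of Proposition~\ref{prop:sing_X_0l}, merely applied at both endpoints instead of one.

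Next I would use that the containment $O_0 + O_1 \subset V$ forces $D_V \geq i\,\gamma_d(0) + j\,\gamma_d(1)$, so that $\deg D_V \geq \ell$ translates under $\overline{\pi}$ to $\deg D_{\overline{V}} \geq \ell - i - j$. Hence
\[
\overline{\pi}\bigl(O^\ell(i,j)\bigr) \;=\; \secV^{\ell - i - j}(C_{\overline{d}}) \;\subset\; \gr(\overline{d}-1,\overline{d}+1).
\]
Applying Proposition~\ref{prop:it_sing_locus_chow} to $\secV^{\ell-i-j}(C_{\overline{d}})$ yields
\[
\Sing\bigl(\secV^{\ell-i-j}(C_{\overline{d}})\bigr) = \secV^{\ell-i-j+1}(C_{\overline{d}}),
\]
and pulling this singular locus back along $\overline{\pi}^{-1}$ gives precisely the $(d-1)$-subspaces of $\C^{d+1}$ containing $O_0 \oplus O_1$ and meeting $C_d$ with degree at least $\ell+1$, that is $O^{\ell+1}(i,j)$. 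This settles the case $\ell \leq d-1$.

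For the second case, when $\ell \geq d$ one has $\ell - i - j \geq \overline{d}$; since any $\overline{d}$ points of $C_{\overline{d}}$ lie in general linear position and therefore span a hyperplane rather than a codimension-two subspace of $\PP^{\overline{d}}$, one gets $\secV^{\ell - i - j}(C_{\overline{d}}) = \varnothing$, hence $O^\ell(i,j) = \varnothing$ and its singular locus is empty. I do not expect any genuine obstacle: the argument is a direct adaptation of the projection trick already used in Proposition~\ref{prop:sing_X_0l}, the only mild care required being the verification that iterated projection away from $O_0$ followed by $O_1$ still lands on the rational normal curve $C_{\overline{d}}$, which is immediate once one notes that $\gamma_d(1)$ and its higher derivatives project to $\gamma_{d-i}(1)$ and its corresponding derivatives after the first projection.
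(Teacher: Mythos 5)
Your proof is correct and takes essentially the same approach as the paper: project away from the span $P = \mathcal{O}^{(i)}(\gamma_d(0)) \oplus \mathcal{O}^{(j)}(\gamma_d(1))$, whose dimension $i+j$ is guaranteed by the generalized Vandermonde determinant, identify $O^\ell(i,j)$ with $\secV^{\ell-i-j}(C_{d-i-j})$, apply Proposition~\ref{prop:it_sing_locus_chow}, and lift back; the emptiness for $\ell\geq d$ also matches.
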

\begin{proof}
Let $P$ be the plane spanned by the two osculant planes $\mathcal{O}^{(i)}(\gamma_d(0))$ and $\mathcal{O}^{(j)}(\gamma_d(1))$, and let $\mathcal{V}_{P}=\{V\mid P\subset V\}\subset\gr(d{-}1,d{+}1)$.
Notice that the generalized Vandermonde determinant, \eqref{eq:generalized_vandermonde}, implies that $\dim(P)=i+j$.
Therefore, let $\pi:\C^{d+1}\rightarrow\C^{d-i-j+1}$ be the projection away from $P$.  
Then, $\pi$ induces the regular map 
    \[
    \overline{\pi}:\mathcal{V}_{P}\subset\gr(d{-}1,d{+}1)\rightarrow\gr(d{-}i{-}j{-}1,d{-}i{-}j{+}1)
    \]
    which in fact is an isomorphism. We then get the following identification   
    \[
    O^\ell(i,j)\cong\overline{\pi} (\mathcal{V}_{P}\cap \secV^\ell(C_d))
    =\secV^{\ell-i-j}(C_{d-i-j}).
    \]
    Therefore, the singular locus is given by Proposition \ref{prop:it_sing_locus_chow} as $\secV^{\ell-i-j+1}(C_{d-i-j}).$
    This lifts under $\overline{\pi}$ to $O^{\ell+1}(i,j)$.
    The claim for $\ell\geq d$ is trivial as then both $\secV^{\ell}(C_d)$ and $O^\ell(i,j)$ are empty.
\end{proof}

A summary of our results from this section is given in Table \ref{tab:strat_summary}.
\begin{table}[H]
\centering
    \begin{tabular}{c|c|c}
        name & codimension & geometric interpretation\\ \hline \hline
            $\CH(\Sec)$ & $1$ & $k$-spaces meeting $\Sec$  \\ \hline
              $\secV^\ell(C_{k+1})$ & $\ell+1$ &  $k$-spaces meeting $C_{k+1}$ to order $\ell$  \\ \hline
             \multirow{2}{1em}{$O^\ell_{0,j}$} & \multirow{2}{*}{$\ell+j+1$}  & $k$-spaces containing the $j$-th osculant plane at $\gamma_{k+1}(0)$,\\
             & & meeting $L_{0,j}$ and intersecting $C_{k+1}$ to order $\ell$\\ \hline
             \multirow{2}{1em}{$O^\ell_{1,j}$} & \multirow{2}{*}{$\ell+j+1$} &  $k$-spaces containing the $j$-th osculant plane at $\gamma_{k+1}(1)$,\\
             & &  meeting $L_{1,j}$ and intersecting $C_{k+1}$ to order $\ell$ \\ \hline
             \multirow{2}{*}{$O^\ell(i,j)$} & \multirow{2}{*}{$\ell+j+i$} &  $k$-spaces containing the $i$-th osculant plane at $\gamma_{k+1}(0)$,  \\
             & & the $j$-th at $\gamma_{k+1}(1)$ and intersecting $C_{k+1}$ to order $\ell$ \\ \hline
        \end{tabular}
        \caption{Summary of the stratification of the algebraic boundary $\partial_a\aA^\infty_k$ with codimension in the Grassmannian $\gr(k,k{+}2)$.}
        \label{tab:strat_summary}
    \end{table}

\section{Residual Arrangement}\label{sec:residual_arrangement}
The \emph{residual arrangement} $\mathcal{R}(\aA^\infty_k)$ of the limit amplituhedron $\aA^\infty_k$ is defined as the part of the singular locus of its algebraic boundary $\partial_a\aA^\infty_k$ which does not intersect the Euclidean boundary $\partial\aA^\infty_k$. The main result of this section is thus:

\begin{theorem}\label{thm:res_arr_A_infty}
    The residual arrangement $\mathcal{R}(\aA^\infty_k)$ is empty.
\end{theorem}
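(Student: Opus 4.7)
The plan is to use the complete stratification of $\partial_a\aA^\infty_k$ obtained in Section~\ref{sec:stratification} and summarised in Table~\ref{tab:strat_summary}, and to verify for each irreducible component $X$ of an iterated singular locus of $\partial_a\aA^\infty_k$ that $X \cap \partial\aA^\infty_k \neq \varnothing$. By Propositions~\ref{prop:it_sing_locus_chow}, \ref{prop:sing_CH(C_D)_CH(01)}, \ref{prop:sing_X_0l} and \ref{prop:sing_XL0^l}, the components that appear are the secant strata $\secV^\ell(C_{k+1})$ and $\secV^\ell(C_{k+1}) \cap \CH(\Sec)$, together with the osculating strata $O^\ell_{0,j}$, $O^\ell_{1,j}$ and $O^\ell(i,j)$. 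I will combine this enumeration with the characterisation of $\partial\aA^\infty_k$ from Proposition~\ref{prop:eucl_bd}: a $k$-plane $V$ is in the Euclidean boundary exactly when it meets the arc $\gamma_{k+1}([0,1])$ on the rational normal curve or the chord segment between $\gamma_{k+1}(0)$ and $\gamma_{k+1}(1)$ on $\Sec$.

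The osculating strata are the easy case. Every plane in $O^\ell_{0,j}$ contains the osculating plane $\mathcal{O}^{(j)}(\gamma_{k+1}(0))$ and in particular the endpoint $\gamma_{k+1}(0) \in \gamma_{k+1}([0,1])$; the same holds for $O^\ell_{1,j}$ at $\gamma_{k+1}(1)$, and $O^\ell(i,j)$ contains both endpoints. Thus each such stratum sits entirely inside $\partial\aA^\infty_k$ and contributes nothing to the residual arrangement.

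For the secant strata I will exhibit a single explicit witness in $X \cap \partial\aA^\infty_k$. For $\secV^\ell(C_{k+1})$ pick distinct parameters $s_1 < \ldots < s_\ell$ in $(0,1)$ and take a generic $k$-plane $V$ through the points $\gamma_{k+1}(s_1), \ldots, \gamma_{k+1}(s_\ell)$; this $V$ meets the arc, so $V \in \partial\aA^\infty_k$, and, being generic in the family of $k$-planes through these $\ell$ points, lies in $\secV^\ell(C_{k+1}) \setminus \secV^{\ell+1}(C_{k+1})$. For $\secV^\ell(C_{k+1}) \cap \CH(\Sec)$, irreducible by the incidence argument in the proof of Proposition~\ref{prop:sing_CH(C_D)_CH(01)}, one can replace one of the parameters $s_i$ by the endpoint $0$, so that the resulting plane contains $\gamma_{k+1}(0) \in C_{k+1} \cap \Sec$ and therefore lies in both $\secV^\ell(C_{k+1})$ and $\CH(\Sec)$ while still meeting the arc.

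The main obstacle is the bookkeeping required to confirm that each exhibited witness genuinely lands in the claimed stratum and not in a strictly smaller sub-stratum of higher codimension. This reduces to a dimension count in $\gr(k,k{+}2)$ together with the transversality of osculating planes to $C_{k+1}$ guaranteed by the generalised Vandermonde determinant of Remark~\ref{rem:generalized_vandermond}; modulo this routine check, the cases above cover every irreducible component of every iterated singular locus, yielding the theorem.
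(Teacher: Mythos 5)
Your approach diverges from the paper's in a way that introduces a genuine gap. The paper's proof is built on Proposition~\ref{prop:strata_denseness}: for an $\ell$-space $Y$ spanned by rows of $Z$, it constructs an explicit amplituhedron map $\widetilde{Z\vert_Y}$ on a smaller non-negative Grassmannian and thereby produces a semialgebraic subset of $\aA^\infty_k$ with non-empty interior inside the variety $\mathcal{V}_Y$. Lemmas~\ref{lem:SingCH_dense}--\ref{lemma:intersection_strata_dense} then specialize $Y$ so that this dense piece lands in each stratum of Table~\ref{tab:strat_summary}. You instead want to argue by exhibiting pointwise witnesses and, for the osculating strata, by asserting full containment of the stratum in $\partial\aA^\infty_k$.

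The full-containment claim for $O^\ell_{0,j}$, $O^\ell_{1,j}$ and $O^\ell(i,j)$ is the problem. You assert that every $k$-plane containing $\gamma_{k+1}(0)$ lies in $\partial\aA^\infty_k$ and justify this via the "conversely" direction of Proposition~\ref{prop:eucl_bd}. But that converse, as actually supported by Lemma~\ref{lem:limit_of_boundary_strata}, only says $\partial\aA^\infty_k$ meets the variety $\mathcal{V}_t$ of $k$-planes through $\gamma_{k+1}(t)$ in a \emph{Zariski dense subset}, not that $\mathcal{V}_t\subset\partial\aA^\infty_k$. Indeed full containment cannot hold in general: membership in $\aA^\infty_k$ is a positivity-constrained semialgebraic condition, and for $k\geq 2$ the set of $k$-planes through $\gamma_{k+1}(0)$ that actually lie in some $\aA^I_k$ is only a proper (though Zariski dense) semialgebraic subset of $\mathcal{V}_{\gamma_{k+1}(0)}$ — this is exactly why the paper proves density rather than containment. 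The same caveat applies to the single "generic witness" you propose for the secant strata: you still need to know that \emph{that particular} plane belongs to $\aA^\infty_k$ (and hence to the boundary), which is what Proposition~\ref{prop:strata_denseness} delivers constructively and your argument only assumes. Replacing "sits entirely inside $\partial\aA^\infty_k$" by "has Zariski dense intersection with $\partial\aA^\infty_k$", and producing that dense intersection via the restricted amplituhedron map rather than via the overly strong reading of Proposition~\ref{prop:eucl_bd}, would bring your argument in line with the paper's and close the gap.
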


In order to prove this claim we show that $\aA_k^\infty$ intersects each stratum discovered in Section \ref{sec:stratification} in a dense subset (in the Zariski topology), type by type.

\begin{proposition}\label{prop:strata_denseness}
    For $\ell\leq k$ let $Y$ be a linear $\ell$-space in $\C^{k+2}$ spanned by $\ell$ rows of $Z$, and $\mathcal{V}_Y$ be the subvariety of $\gr(k,k{+}2)$ comprised of all $k$-spaces containing $Y$. Then, $\aA^\infty_k$ intersects $\mathcal{V}_Y$ in a Zariski dense subset.
\end{proposition}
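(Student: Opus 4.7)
The plan is to generalize the construction from the proof of Lemma \ref{lem:limit_of_boundary_strata}, which handled the special case $\ell = 1$ (intersecting $C_{k+1}$ in a single point), by forcing $\ell$ of the rows of the non-negative matrix to be standard basis vectors indexing the rows of $Z$ that span $Y$.

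Write $Y = \operatorname{span}(\gamma_{k+1}(s_1), \ldots, \gamma_{k+1}(s_\ell))$ with $0 \le s_1 < \ldots < s_\ell \le 1$, and note that $\mathcal{V}_Y \cong \gr(k{-}\ell, k{-}\ell{+}2)$ has dimension $2(k-\ell)$. First, I would fix a partition $I_n$ of $[0,1]$ containing all $s_j$'s, refined enough that between (and around) any two consecutive $s_j$ there are at least three additional partition points. Write $i_r$ for the position of $s_r$ in $I_n$, and choose $k-\ell$ pairwise disjoint triples $\{j_r, j_r+1, j_r+2\}$ of consecutive indices inside $[n] \setminus \{i_1, \ldots, i_\ell\}$. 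I would then construct a family $\mathcal{Y} \subset \gr(k,n)_{\ge 0}$ of matrices whose rows, after reordering so that their supports are increasing, are
\[
V_r = e_{i_r} \quad \text{for } r \le \ell, \qquad V_r = y_{r,1} e_{j_r} + y_{r,2} e_{j_r+1} + e_{j_r+2} \quad \text{for } \ell < r \le k,
\]
with $y_{r,1}, y_{r,2} \ge 0$. Because the row supports are pairwise disjoint and ordered, every $k \times k$ minor is either zero or a product of non-negative entries, so $\mathcal{Y} \subset \gr(k,n)_{\ge 0}$. The image $\widetilde{Z}(V) = VZ$ has its first $\ell$ rows equal to $\gamma_{k+1}(s_j)$, forcing $Y \subseteq \widetilde{Z}(V) \in \aA^{I_n}_k$, while the remaining $k-\ell$ rows are convex combinations of three consecutive points on $C_{k+1}$. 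The parameter count $\dim \mathcal{Y} = 2(k-\ell)$ matches $\dim \mathcal{V}_Y$.

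Next, I would refine the partition and slide the triples' positions throughout $[0,1]$, so that in the pointwise limit the $k-\ell$ free rows of $VZ$ can approach arbitrary vectors in the convex hull of $\gamma_{k+1}([0,1])$. Since this convex hull has non-empty Euclidean interior in $\R^{k+2}$, the resulting $(k{-}\ell)$-planes modulo $Y$ fill out a Euclidean dense subset of $\mathcal{V}_Y$; by Proposition \ref{prop:eucl_bd} these limits remain inside $\aA^\infty_k$.

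The main obstacle is upgrading Euclidean density to Zariski density in $\mathcal{V}_Y$. Since $\mathcal{V}_Y$ is irreducible of the same dimension as $\mathcal{Y}$, this reduces to verifying that the map $\mathcal{Y} \to \mathcal{V}_Y$ induced by $\widetilde{Z}$ has generically surjective differential, which is a direct Jacobian computation in the $y_{r,j}$'s using the linear independence of derivatives of $\gamma_{k+1}$ recorded in Remark \ref{rem:generalized_vandermond}; this is essentially the same argument as the dimension count in the proof of Lemma \ref{lem:limit_of_boundary_strata}, now applied independently to each of the $k-\ell$ free rows.
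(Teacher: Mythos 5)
Your approach is correct in spirit but takes a genuinely different route from the paper, and it has a couple of slips worth flagging. The paper's proof is shorter and more structural: after observing that $Y$ is spanned by rows $Z_{i_1},\ldots,Z_{i_\ell}$ of $Z$, it deletes those $\ell$ rows to obtain a smaller totally positive matrix $Z\vert_Y \in \R^{(n-\ell)\times(k-\ell+2)}$, which defines an amplituhedron $\aA^I_{k-\ell}\subset\gr(k{-}\ell,k{-}\ell{+}2)$. Lifting $V'\in\gr(k{-}\ell,n{-}\ell)_{\geq 0}$ back to $\gr(k,n)_{\geq 0}$ by reinserting the rows $e_{i_1},\ldots,e_{i_\ell}$ shows that $\aA^\infty_k\cap\mathcal{V}_Y$ contains a copy of $\aA^I_{k-\ell}$; full-dimensionality of this lower amplituhedron then gives Zariski density directly, with no Jacobian computation and no limits. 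You instead hard-code a specific $2(k{-}\ell)$-parameter family $\mathcal{Y}$ inside $\gr(k{-}\ell,n{-}\ell)_{\geq 0}$ (the disjoint-triple construction, analogous to Lemma~\ref{lem:geom_alg_bd}) and reduce density to surjectivity of the differential of $\widetilde{Z}$ on $\mathcal{Y}$. That is workable — the key input is that after projecting modulo $Y$ the rational normal curve becomes $C_{k+1-\ell}$, on which any $k{-}\ell{+}2$ distinct points are linearly independent, so the required rank condition holds for disjoint triples and generic $y_{r,j}$ — but you would still need to carry this out, whereas the paper sidesteps it entirely. Two smaller issues: the claim that sliding the triples lets the free rows of $VZ$ ``approach arbitrary vectors in the convex hull of $\gamma_{k+1}([0,1])$'' is not correct (each such row lies in the 2-simplex spanned by three points of the curve, which is far from the full $(k{+}1)$-dimensional convex hull); and the appeal to limits and Proposition~\ref{prop:eucl_bd} is unnecessary, since every element of $\mathcal{Y}$ already maps into $\aA^{I_n}_k\subset\aA^\infty_k$ at finite $n$. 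Neither affects the viability of your route, because what actually matters is the image modulo $Y$ and its dimension, which is exactly what the Jacobian argument controls.
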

\begin{proof}
    Let $Y$ be spanned by the rows $Z_{i_1},\ldots,Z_{i_\ell}$ of $Z$. Further let, $\pi_Y$ be the projection of $\C^{k+2}$ away from $Y$, and $\pi'_Y$ be the projection of $\C^n$ away from the span of $e_{i_1},\ldots, e_{i_\ell}$, where $e_j$ is the $j$-th standard basis (row) vector of $\C^n$.
    With the notation $\mathcal{V}_Y$ as above, we have $\dim(\mathcal{V}_Y)=2(k-l)$. Since the rows $Z_j$ of $Z$ lie on the rational normal curve $C_{k+1}$, the projection induces the isomorphism $\pi_Y(C_{k+1})\cong C_{k+1-\ell}$ in $\PP^{k+1-\ell}$. 
    By deleting the rows $Z_{i_j}$ of $Z$ we get a totally positive $(n{-}\ell)\times (k{-}\ell{+}2)$  matrix $Z\vert_Y$, inducing a amplituhedron map
    \[
    \widetilde{Z\vert_Y}:\gr(k{-}\ell,n{-}\ell)_{\geq 0}\rightarrow \aA^I_{k-\ell}\subset\gr(k{-}\ell,k{-}\ell{+}2),
    \]
    sending $V'\mapsto V'Z\vert_Y$. Its image is a semialgebraic set with non-empty interior i.e. of dimension $2(k-\ell)$.
    Now, let $V\in\gr(k,n)_{\geq 0}$ such that there is the $\ell\times\ell$ identity matrix in the columns $i_1,\ldots, i_\ell$. We identify the remaining columns with a matrix  $V'\in\gr(k{-}\ell,n{-}\ell)_{\geq 0}$ via the map $\pi_Y'$.
    Then $\widetilde{Z}(V)=VZ$ contains $Y$ and thus $VZ\in\mathcal{V}_Y$.
    More precisely, the image under $\widetilde{Z}$ can be written as the rowspan of the matrix obtained by concatenation of $Y$ and $\widetilde{Z}\vert_Y(V')$.  
    Therefore, we get a semialgebraic set contained in $\aA^\infty_k\cap \mathcal{V}_Y$ with non-empty interior, proving the claim.
\end{proof}

\begin{lemma}\label{lem:SingCH_dense}
        For all $\ell\leq k$ the limit amplituhedron $\aA^\infty_{k}$ intersects the variety of $\ell$-secants $\secV^\ell(C_{k+1})$ in a Zariski dense subset.
\end{lemma}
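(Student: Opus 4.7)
The plan is to match real dimensions. Since $\secV^\ell(C_{k+1})$ is irreducible of complex dimension $2k - \ell$ by Proposition~\ref{prop:smooth_tangent} (being the image of the irreducible incidence $\Sigma_\ell$ under $\pi_2$), it suffices to exhibit a semialgebraic subset of $\aA_k^\infty \cap \secV^\ell(C_{k+1})$ of real dimension $2k - \ell$: such a subset cannot be contained in any proper Zariski closed subvariety of $\secV^\ell(C_{k+1})$.

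For any tuple $\tau = (\tau_1, \ldots, \tau_\ell)$ with $0 < \tau_1 < \cdots < \tau_\ell < 1$, let $Y(\tau) \subset \R^{k+2}$ be the linear span of $\gamma_{k+1}(\tau_1), \ldots, \gamma_{k+1}(\tau_\ell)$; by the Vandermonde determinant this has dimension exactly $\ell$. Choosing any partition $I$ of $[0,1]$ containing the $\tau_i$ makes $Y(\tau)$ the span of $\ell$ rows of $Z(I)$, so Proposition~\ref{prop:strata_denseness} applies and yields that $\aA_k^\infty \cap \mathcal{V}_{Y(\tau)}$ has non-empty Euclidean interior in $\mathcal{V}_{Y(\tau)}$, hence real dimension $2(k-\ell)$. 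Moreover $\mathcal{V}_{Y(\tau)} \subset \secV^\ell(C_{k+1})$, since every $V \supset Y(\tau)$ meets $C_{k+1}$ in at least the $\ell$ points $\gamma_{k+1}(\tau_i)$.

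Letting $\tau$ range over the open $\ell$-simplex $T = \{0 < \tau_1 < \cdots < \tau_\ell < 1\}$ produces the incidence
\begin{align*}
\mathcal{T} = \{(\tau, V) \in T \times \gr(k, k{+}2) \mid V \in \aA_k^\infty \cap \mathcal{V}_{Y(\tau)}\},
\end{align*}
which contains a semialgebraic subset of real dimension $\ell + 2(k-\ell) = 2k - \ell$. Its projection to the second factor lands inside $\aA_k^\infty \cap \secV^\ell(C_{k+1})$ and is generically finite: whenever $V$ meets $C_{k+1}$ in exactly $\ell$ distinct points, the ordered tuple $\tau$ is uniquely recovered from $V \cap C_{k+1}$, while the locus where $V$ meets $C_{k+1}$ with higher multiplicity lies in $\secV^{\ell+1}(C_{k+1})$, a proper closed subset by Proposition~\ref{prop:it_sing_locus_chow}. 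Thus the image has real dimension $2k - \ell = \dim_\C \secV^\ell(C_{k+1})$, and Zariski density follows.

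The main subtlety is this generic finiteness, i.e.\ ensuring that varying $\tau$ genuinely contributes $\ell$ additional dimensions to the image rather than being absorbed into the fibers of the projection; the Vandermonde-type linear independence of distinct points on the rational normal curve is precisely what rules out such collapse.
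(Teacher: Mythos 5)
Your proof is correct and takes essentially the same route as the paper's: both pick $\ell$ real parameters in $(0,1)$, form the span $Y$ of the corresponding points on the rational normal curve, invoke Proposition~\ref{prop:strata_denseness} to get a $2(k-\ell)$-dimensional semialgebraic piece of $\aA^\infty_k$ in $\mathcal{V}_Y$, and then count dimensions as the parameters vary. Your write-up is actually a bit more careful than the paper's: you spell out the generic finiteness of the incidence projection (via the uniqueness of the degree-$\ell$ divisor $D_V$ for generic $V$ and the fact that the higher-multiplicity locus is the proper closed subset $\secV^{\ell+1}(C_{k+1})$), whereas the paper compresses this into a single sentence that conflates the $2(k-\ell)$-dimensional fiber over a fixed $s$ with the $(2k-\ell)$-dimensional total space swept out as $s$ varies.
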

\begin{proof}
    Pick real numbers $0<s_1 < s_2 < \ldots < s_\ell <1$, and define the $\ell$-space 
    \[
    Y(s) = \operatorname{rowspan}
    \begin{pmatrix}
        \gamma_{k+1}(s_1) \\ \vdots \\ \gamma_{k+1}(s_\ell)
    \end{pmatrix}
    .
    \]
    It follows from Proposition \ref{prop:strata_denseness} that $\aA^\infty_k$ intersects $\mathcal{V}_{Y(s)}$ in a dense subset. Any element of $\mathcal{V}_{Y(s)}$ is an element of $\Sing^\ell(\CH(C_{k+1})=\secV^\ell(C_{k+1})$ by construction. Since the fiber of $\mathcal{V}_{Y(s)}$ over $s$ has dimension $2k{-}l =  \dim(\secV^\ell(C_{k+1}))$, this shows the claim.
\end{proof}

\begin{lemma}\label{lem:CHcapH_dense}
    For all $\ell\leq k$ the limit amplituhedron $\aA^\infty_k$ contains a Zariski dense subset of the intersection $\secV^\ell(C_{k+1})\cap\CH(\Sec)$.
\end{lemma}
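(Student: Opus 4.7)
The plan is to mirror Lemma~\ref{lem:SingCH_dense} while additionally enforcing the secant condition via a reduction to a smaller amplituhedron. I would fix real numbers $0<s_1<\ldots<s_\ell<1$ and set $Y(s)=\operatorname{span}(\gamma_{k+1}(s_1),\ldots,\gamma_{k+1}(s_\ell))$. Consider $\mathcal{V}_{Y(s)}\cap\CH(\Sec)\subset\gr(k,k{+}2)$, which is a proper hyperplane section of the irreducible Grassmannian $\mathcal{V}_{Y(s)}\cong\gr(k{-}\ell,k{-}\ell{+}2)$, hence irreducible of dimension $2(k-\ell)-1$. As $s$ varies in $(0,1)^\ell$, these subvarieties sweep out an open dense subset of $\secV^\ell(C_{k+1})\cap\CH(\Sec)$, which is itself irreducible of dimension $2k-\ell-1$ by the same incidence-variety argument as in Proposition~\ref{prop:sing_CH(C_D)_CH(01)}. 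Thus, for generic $s$, it suffices to exhibit a Zariski dense subset of $\mathcal{V}_{Y(s)}\cap\CH(\Sec)$ inside $\aA^\infty_k$.

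For $\ell<k$, I would pick a partition $I$ of $[0,1]$ containing $\{0,s_1,\ldots,s_\ell,1\}$ with $t_1=0$, $t_n=1$ and $s_j=t_{i_j}$. Following the reduction in Proposition~\ref{prop:strata_denseness}, matrices $V\in\gr(k,n)_{\geq 0}$ carrying the $\ell\times\ell$ identity block in columns $i_1,\ldots,i_\ell$ give $\widetilde Z(V)\in\mathcal{V}_{Y(s)}$, and the complementary $k-\ell$ rows form a totally non-negative $V'\in\gr(k{-}\ell,n{-}\ell)_{\geq 0}$ whose image $\widetilde{Z|_{Y(s)}}(V')$ lies in the reduced amplituhedron $\aA^{I'}_{k-\ell}\subset\gr(k{-}\ell,k{-}\ell{+}2)$. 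Under the projection $\pi_{Y(s)}\colon\C^{k+2}\to\C^{k-\ell+2}$ away from $Y(s)$, the secant line $\Sec$ maps onto the line $\Sec'$ spanned by the (projected) first and last rows of $Z|_{Y(s)}$, so $\widetilde Z(V)$ meets $\Sec$ precisely when $\widetilde{Z|_{Y(s)}}(V')$ meets $\Sec'$.

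I would then invoke Lemma~\ref{lem:geom_alg_bd} for $\aA^{I'}_{k-\ell}$: its boundary stratum $\langle A'_1\cdots A'_{k-\ell}\,1\,(n{-}\ell)\rangle=0$ is a $(2(k-\ell)-1)$-dimensional semialgebraic subset Zariski dense in the hyperplane $\CH(\Sec')\subset\gr(k{-}\ell,k{-}\ell{+}2)$. Lifting it through the reduction produces a $(2(k-\ell)-1)$-dimensional family of amplituhedron points contained in $\mathcal{V}_{Y(s)}\cap\CH(\Sec)$, which is Zariski dense there by matching dimension and irreducibility. Letting $s$ vary then yields density in $\secV^\ell(C_{k+1})\cap\CH(\Sec)$. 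The boundary case $\ell=k$ must be treated directly, since the reduced amplituhedron collapses: take $V\in\gr(k,n)_{\geq 0}$ to be the matrix with rows $e_1,e_{i_1},\ldots,e_{i_{k-1}}$, where $s_j=t_{i_j}\in(0,1)$, so that $\widetilde Z(V)=\operatorname{span}(\gamma_{k+1}(0),\gamma_{k+1}(s_1),\ldots,\gamma_{k+1}(s_{k-1}))$ lies in $\secV^k(C_{k+1})\cap\CH(\Sec)$ and meets $\Sec$ at $\gamma_{k+1}(0)$; varying the $s_j$ produces the required $(k-1)$-dimensional dense family.

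The main obstacle I anticipate is verifying that the reduction is clean enough to ensure the lifted family genuinely realises the full expected dimension $2(k-\ell)-1$ inside $\mathcal{V}_{Y(s)}\cap\CH(\Sec)$, and that the composite matrix $V$ assembled from the identity block and the reduced $V'$ is indeed totally non-negative. Both concerns reduce to the standard block structure of cells in the non-negative Grassmannian; once confirmed, density follows by a routine dimension count against the irreducible target $\secV^\ell(C_{k+1})\cap\CH(\Sec)$.
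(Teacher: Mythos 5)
Your proposal takes a genuinely different and noticeably heavier route than the paper's. The paper's one-line insight is that $\gamma_{k+1}(0)$ already lies on $\Sec$: by placing $\gamma_{k+1}(0)$ itself in the spanning set, $Y(0,s)=\operatorname{rowspan}\bigl(\gamma_{k+1}(0),\gamma_{k+1}(s_1),\dots,\gamma_{k+1}(s_{\ell-1})\bigr)$, every $k$-space containing $Y(0,s)$ automatically meets $\Sec$, so the condition $\CH(\Sec)$ comes for free and one can quote Proposition~\ref{prop:strata_denseness} verbatim; the dimension count $(\ell-1)+2(k-\ell)=2k-\ell-1$ finishes it. You instead span $Y(s)$ by $\ell$ interior points, so $\mathcal{V}_{Y(s)}\not\subset\CH(\Sec)$, and must then cut by the hyperplane and identify $\mathcal{V}_{Y(s)}\cap\CH(\Sec)$ with $\CH(\Sec')\subset\gr(k{-}\ell,k{-}\ell{+}2)$, appealing to Lemma~\ref{lem:geom_alg_bd} for the reduced amplituhedron $\aA^{I'}_{k-\ell}$ to exhibit boundary points densely in that hyperplane. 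This is a valid strategy (the lift $W\supseteq Y(s)$, $\pi_{Y(s)}(W)\cap\Sec'\neq\varnothing\Rightarrow W\cap\Sec\neq\varnothing$ is sound), but it requires two extra moving pieces — the identification of $\pi_{Y(s)}(\Sec)$ with the secant $\Sec'$ of the projected curve, and the density of the boundary stratum of $\aA^{I'}_{k-\ell}$ — that the paper avoids entirely. Notice also that your separate $\ell=k$ case, where the reduction collapses, silently reverts to the paper's trick of putting $\gamma_{k+1}(0)$ into the spanning set; the paper's construction handles all $\ell\le k$ uniformly. Finally, the total-nonnegativity concern you flag at the end (whether the $\ell\times\ell$ identity block together with a nonnegative $V'$ assembles into a nonnegative $V$, given the alternating signs introduced by the column positions $i_1,\dots,i_\ell$) is a real sign-chase that you leave as "once confirmed"; it is implicit in the paper's Proposition~\ref{prop:strata_denseness} as well, but since your proof leans on it more heavily (you need it both for the outer reduction to $\aA^{I'}_{k-\ell}$ and for the dense boundary stratum inside it), you should either resolve it or cite a precise reference, rather than deferring it.
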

\begin{proof}
    Again pick real numbers $0<s_1 < s_2 < \ldots < s_{\ell-1} <1$, and define the $\ell$-space 
    \[
    Y(0,s) = \operatorname{rowspan}
    \begin{pmatrix}
        \gamma_{k+1}(0) \\ \gamma_{k+1}(s_1) \\ \vdots \\ \gamma_{k+1}(s_{\ell-1})
    \end{pmatrix}.
    \]
    It follows from Proposition \ref{prop:strata_denseness} that $\aA^\infty_k$ intersects $\mathcal{V}_{Y(0,s)}$ in a dense subset and any element of $\mathcal{V}_{Y(0,s)}$ is an element of $\secV^\ell(C_{k+1})\cap\CH(\Sec)$ by construction. Since the fiber of $\mathcal{V}_{Y(0,s)}$ over $s$ has dimension $2k{-}l{-}1$, which equals the dimension of $\secV^\ell(C_{k+1})\cap\CH(\Sec)$, this proves the claim.
\end{proof}

\begin{lemma}\label{lem:X_L0_dense}
For all $j<\ell\leq k$ the limit amplituhedron $\aA^\infty_k$ intersects both $O^\ell_{0,j}$ and $O^\ell_{1,j}$ in Zariski dense subsets. 
\end{lemma}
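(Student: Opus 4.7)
The plan is to mimic Lemmas~\ref{lem:SingCH_dense} and \ref{lem:CHcapH_dense}, with two additional ingredients: we must force the osculant plane $\mathcal{O}^{(j)}(\gamma_{k+1}(0))$ into our $k$-space via a limiting procedure (since it is not spanned by any rows of $Z$), and we must arrange incidence with $L_{0,j}$ by including $\gamma_{k+1}(1)\in L_{0,j}$ in our subspace. Throughout I work with $O^\ell_{0,j}$; the case $O^\ell_{1,j}$ follows by swapping the roles of $\gamma_{k+1}(0)$ and $\gamma_{k+1}(1)$.

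For a tuple $s=(s_1,\ldots,s_{\ell-j-1})$ with $0<s_1<\ldots<s_{\ell-j-1}<1$ and for $\epsilon>0$ small, pick $0=t_1<t_2<\ldots<t_j<\epsilon$ and set
\[
Y^\epsilon(s)=\operatorname{rowspan}
\begin{pmatrix}
\gamma_{k+1}(t_1)\\ \vdots\\ \gamma_{k+1}(t_j)\\ \gamma_{k+1}(s_1)\\ \vdots\\ \gamma_{k+1}(s_{\ell-j-1})\\ \gamma_{k+1}(1)
\end{pmatrix},
\]
which is $\ell$-dimensional by the generalized Vandermonde determinant of Remark~\ref{rem:generalized_vandermond}. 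By Proposition~\ref{prop:strata_denseness}, $\aA^\infty_k\cap \mathcal{V}_{Y^\epsilon(s)}$ contains a semialgebraic subset of full dimension $2(k-\ell)$. Letting $t_2,\ldots,t_j\to 0$, $Y^\epsilon(s)$ converges in the Grassmannian (by a standard Taylor expansion) to
\[
Y(s)=\mathcal{O}^{(j)}(\gamma_{k+1}(0))+\operatorname{span}_{\C}\bigl(\gamma_{k+1}(s_1),\ldots,\gamma_{k+1}(s_{\ell-j-1}),\gamma_{k+1}(1)\bigr).
\]
Since $\aA^\infty_k$ is closed in the Euclidean topology (Proposition~\ref{prop:eucl_bd}), passing these semialgebraic subsets to the limit produces a full-dimensional semialgebraic subset of $\aA^\infty_k\cap \mathcal{V}_{Y(s)}$, which is therefore Zariski dense in $\mathcal{V}_{Y(s)}$.

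Every $V\in \mathcal{V}_{Y(s)}$ contains $\mathcal{O}^{(j)}(\gamma_{k+1}(0))$, meets $L_{0,j}$ through $\gamma_{k+1}(1)$, and satisfies $D_V\geq j\cdot \gamma_{k+1}(0)+\sum_{i=1}^{\ell-j-1}\gamma_{k+1}(s_i)+\gamma_{k+1}(1)$, so $\deg(D_V)\geq \ell$; hence $\mathcal{V}_{Y(s)}\subseteq O^\ell_{0,j}$. Varying $s$ in the open simplex $\{0<s_1<\ldots<s_{\ell-j-1}<1\}$, the union $\bigcup_s \mathcal{V}_{Y(s)}$ is a semialgebraic subset of $\aA^\infty_k\cap O^\ell_{0,j}$ of dimension
\[
(\ell-j-1)+2(k-\ell)=2k-\ell-j-1=\dim O^\ell_{0,j},
\]
hence Zariski dense in the irreducible variety $O^\ell_{0,j}$ (irreducibility was established in the proof of Proposition~\ref{prop:sing_X_0l}). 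The main subtlety is making rigorous the limit passage $\epsilon\to 0$: one must verify that the continuous $\epsilon$-family of full-dimensional subsets arising from Proposition~\ref{prop:strata_denseness}, each parameterized by a smaller amplituhedron whose defining data depends continuously on $\epsilon$, specializes at $\epsilon=0$ to a subset of $\mathcal{V}_{Y(s)}$ of the expected dimension $2(k-\ell)$. This follows from the explicit form of the parameterization together with closedness of $\aA^\infty_k$.
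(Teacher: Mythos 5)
Your proposal follows essentially the same approach as the paper: both build the $\ell$-space from $j$ partition points clustered near $0$ (whose span degenerates to the osculant plane $\mathcal{O}^{(j)}(\gamma_{k+1}(0))$), the $\ell-j-1$ points $\gamma_{k+1}(s_i)$, and $\gamma_{k+1}(1)\in L_{0,j}$, then invoke Proposition~\ref{prop:strata_denseness} and a dimension count, with the same limiting step (your $\epsilon\to 0$ corresponds to the paper's refining sequence $I_n$). The technical point you flag at the end — that the family of full-dimensional slices does not degenerate in the limit — is glossed over to a comparable extent in the paper's own proof.
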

\begin{proof}
    Recall that $\dim(O^\ell_{0,j})=2k-\ell-j-1$ and set $c=\ell-j-1$.
    Fix $s=(s_1,\ldots,s_{c})\in(0,1)^{c}$, take a partition $I$ of $[0,1]$ refining $s$, and let $j<i_1<\ldots<i_c$ be the labels of the $s_i$'s in $I$.
    Consider a family of partitions $\mathcal{I}=\{I_n\}_{n\in\N}$ with $I_n\leq I_{n+1}$ for all $n$ refining $I$, in such a way that $\gamma_{k+1}(0),\ldots, \gamma_{k+1}(t_j)$ span the osculant plane $\mathcal{O}^{(j)}(\gamma_{k+1}(0))$ for $n\rightarrow\infty$.
    Next consider the $\ell$-space
    \begin{align}\label{eq:Y_L0}
    Y(I_n) = \operatorname{rowspan}
    \begin{pmatrix}
        \gamma_{k+1}(0) \\ \vdots \\  \gamma_{k+1}(t_j) \\ \gamma_{k+1}(t_{i_1}) \\ \vdots \\ \gamma_{k+1}(t_{i_{c}}) \\ \gamma_{k+1}(1)
    \end{pmatrix}.
    \end{align}
    Notice that $Y(I_n)$ intersects $L_{0,j}$ in the limit for $n\rightarrow\infty$ by construction.
    As usual, let $\mathcal{V}_{Y(I_n)}$ denote the set of $k$-spaces containing $Y(I_n)$ for any fixed $n\in\N$. Then, notice that $\dim(\mathcal{V}_{Y(I_n)})=2k-2\ell$ and therefore the union of fibers over all $s$ is of dimension
    $2k-l-j-1$ which equals $\dim(O^\ell_{0,j}).$
    Consequently, $\mathcal{V}_{Y(I_n)}$ is dense in $O^\ell_{0,j}$ for $n\rightarrow\infty$, and thus $\aA^\infty_k$ must in turn be dense in $\mathcal{V}_{Y(I_n)}$ for all $n\in\N$ by Proposition \ref{prop:strata_denseness}. 
    It follows that $\aA^\infty_k$ is dense in $O^\ell_{0,j}$, as ought to be shown.
    The proof for $O^\ell_{1,j}$ is analogous upon defining 
    \begin{align}\label{eq:Y_L1}
        Y(I_n) = \operatorname{rowspan} 
        \begin{pmatrix}
            \gamma_{k+1}(0) \\ \gamma_{k+1}(t_{i_1}) \\ \vdots \\ \gamma_{k+1}(t_{i_{c}}) \\ \gamma_{k+1}(t_{n-j}) \\ \vdots \\ \gamma_{k+1}(1)
        \end{pmatrix}.
    \end{align}    
\end{proof}

\begin{lemma}\label{lemma:intersection_strata_dense}
    The limit amplituhedron $\aA^\infty_k$ is Zariski dense in any intersection of the strata from Section \ref{sec:strata}.
\end{lemma}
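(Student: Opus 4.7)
The plan is to reduce an arbitrary intersection of strata from Table~\ref{tab:strat_summary} to a single configuration of the type treated in Lemmas~\ref{lem:SingCH_dense}, \ref{lem:CHcapH_dense}, and \ref{lem:X_L0_dense}, and to then apply Proposition~\ref{prop:strata_denseness}. First I would observe that any intersection of the listed strata is determined by four pieces of data: the maximal osculant order $a\geq 0$ at $\gamma_{k+1}(0)$ that $V$ must contain, the maximal osculant order $b\geq 0$ at $\gamma_{k+1}(1)$ that $V$ must contain, the minimal total intersection multiplicity $\ell\geq a+b$ of $V$ with $C_{k+1}$, and a (possibly empty) collection of line-meeting conditions of the form $V\cap L_{0,i'}\neq\varnothing$, $V\cap L_{1,j'}\neq\varnothing$, or $V\cap\Sec\neq\varnothing$. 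Every such line-meeting condition becomes automatic once $\gamma_{k+1}(0)$ and $\gamma_{k+1}(1)$ lie in $V$, since $\gamma_{k+1}(0)\in L_{1,j'}\cap\Sec$ and $\gamma_{k+1}(1)\in L_{0,i'}\cap\Sec$ for every admissible $i',j'$; so it suffices to ensure that both endpoints are present in the auxiliary space we are about to construct.

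Next I would build a family of $\ell$-spaces $Y(I_n)$ combining the constructions \eqref{eq:Y_L0} and \eqref{eq:Y_L1}. Fix $c=\ell-a-b$ intermediate real numbers $0<s_1<\ldots<s_c<1$ and a sequence of partitions $\mathcal{I}=\{I_n\}_n$ refining $\{0,s_1,\ldots,s_c,1\}$ in such a way that, as $n\to\infty$, $\max(a,1)$ initial points of $I_n$ cluster at $0$ and $\max(b,1)$ final points cluster at $1$, while the $s_i$'s remain fixed. Take $Y(I_n)$ to be the rowspan of $\gamma_{k+1}(0),\gamma_{k+1}(t_2),\ldots,\gamma_{k+1}(t_a),\gamma_{k+1}(s_1),\ldots,\gamma_{k+1}(s_c),\gamma_{k+1}(t_{n-b+1}),\ldots,\gamma_{k+1}(1)$. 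By the generalized Vandermonde formula \eqref{eq:generalized_vandermonde}, these $\ell$ vectors are linearly independent, and $\lim_{n\to\infty} Y(I_n)$ is the span of $\mathcal{O}^{(a)}(\gamma_{k+1}(0))$, the $c$ intermediate points $\gamma_{k+1}(s_i)$, and $\mathcal{O}^{(b)}(\gamma_{k+1}(1))$. Hence every $k$-space containing this limit configuration lies in the prescribed stratum intersection.

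For each $n$, Proposition~\ref{prop:strata_denseness} yields that $\aA^\infty_k$ is Zariski dense in the subvariety $\mathcal{V}_{Y(I_n)}\subset\gr(k,k{+}2)$ of $k$-spaces containing $Y(I_n)$, which has dimension $2(k-\ell)$. Letting $s=(s_1,\ldots,s_c)$ vary over $(0,1)^c$ and passing to the limit $n\to\infty$, closedness of $\aA^\infty_k$ in the Euclidean topology (Proposition~\ref{prop:eucl_bd}) implies that the union of the corresponding $\mathcal{V}_{\lim Y(I_n)}$ is contained in $\aA^\infty_k$. This union has dimension $2(k-\ell)+c=2k-\ell-a-b$, which matches the dimension of the stratum intersection, read off from Propositions~\ref{prop:sing_X_0l} and \ref{prop:sing_XL0^l}: projecting away from the $(a+b)$-dimensional span of the two osculant planes identifies the intersection with $\secV^{\ell-a-b}(C_{d-a-b})$ inside $\gr(k-a-b,k-a-b+2)$. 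Since that target secant variety is irreducible, Zariski density transfers upward to the full stratum intersection, completing the argument.

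The main obstacle will be the case bookkeeping: each combination of the constraint data $(a,b,\ell,\text{line conditions})$ needs to be separately checked to confirm that (i) the limit space $\lim Y(I_n)$ indeed realizes the correct osculant and secant containments, (ii) every remaining line-meeting condition is automatic from the inclusion of the endpoints $\gamma_{k+1}(0)$ and $\gamma_{k+1}(1)$ (and of the correct osculant planes) in $V$, and (iii) the dimension count is tight, so that the subset of $\aA^\infty_k$ we exhibit is actually Zariski dense in the whole stratum and not in some proper subvariety of it.
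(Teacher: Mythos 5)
Your approach is essentially the same as the paper's: parametrize the intersection by the data (osculant orders at the two endpoints, total secant order, line conditions), combine the $Y$-space constructions from the proofs of Lemmas~\ref{lem:SingCH_dense}--\ref{lem:X_L0_dense}, and apply Proposition~\ref{prop:strata_denseness} together with a dimension count. The paper's own proof is much terser and handles the reduction case by case (in particular noting $O^\ell_{0,i}\cap O^{\ell'}_{1,j}=O^{\max(\ell,\ell')}(i,j)$), but the underlying mechanism is identical.

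One small wrinkle worth fixing: your clustering prescription ``$\max(a,1)$ initial points cluster at $0$ and $\max(b,1)$ final points cluster at $1$'' together with the requirement that $Y(I_n)$ be spanned by exactly $\ell=a+c+b$ vectors is inconsistent whenever $a=0$ or $b=0$, since then the $\max(\cdot,1)$ forces an extra endpoint into $Y$ and the dimension of $\mathcal{V}_{Y(I_n)}$ drops below $2(k-\ell)$. You should instead note, as the paper implicitly does, that whenever an $O^\ell_{0,i}$ or $O^\ell_{1,j}$ constraint is present the corresponding osculant order is at least $1$, so the endpoint is already forced; intersections of $\secV^\ell$ with $\CH(\Sec)$ alone (where no osculant is forced) are disposed of separately by Lemma~\ref{lem:CHcapH_dense}, so the $\max(\cdot,1)$ device is never actually needed. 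With that repair, and your stated checklist (i)--(iii), the argument goes through as intended.
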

\begin{proof}
    For $\CH(\Sec)$, $\secV^\ell(C_{k+1})$ and their intersection $\CH(\Sec)\cap\secV^\ell(C_{k+1})$ this follows from Lemma \ref{lem:SingCH_dense} and \ref{lem:CHcapH_dense}.
    For $O^\ell(i,j)$ the subsets defined in \eqref{eq:Y_L0} and \eqref{eq:Y_L1} can be combined to show that the intersection with $\aA^\infty_k$ is Zariski dense as well.
    Similarly, for the intersections $O^\ell(i,j)\cap O^\ell_{0,m}$ and $O^\ell(i,j)\cap O^\ell_{1,m}$.
    Finally, as $O^\ell_{0,i}\cap O^{\ell'}_{1,j}=O^{\operatorname{max}(\ell,\ell')}(i,j)$ by definition, this proves the claim.
\end{proof}

Now we can prove the main result of this section, Theorem \ref{thm:res_arr_A_infty}.

\begin{proof}[Proof of Theorem \ref{thm:res_arr_A_infty}]
    Combining Theorem \ref{thm:algebraic_boundary_of_A_infty} with Lemmas \ref{lem:SingCH_dense}-\ref{lemma:intersection_strata_dense} proves the claim.
\end{proof}

The remainder of this section is devoted to proving Theorem \ref{thm:main_pos_geom}, to that end we want to employ the notion of adjoints, as recent generalization of this notion, see e.g. \cite{polypols,ranestad2024adjoints}, have been a successful tool in the study of positive geometries. For our purposes an adjoint is defined as follows.

\begin{definition}\label{def:adjoint}
    Let $R$ be the coordinate ring of the complex Grassmannian $\gr_\C(k,k{+}2)$, and $R_0$ its homogeneous degree $0$ part.
    Then, $\alpha(\aA^\infty_k)\in R_0$ is an \emph{adjoint} for $\aA^\infty_k$ if  $\alpha(\aA^\infty_k)(Y)=0$ for all $Y\in\mathcal{R}(\aA^\infty_k)$.
\end{definition}
\begin{remark}
    The adjoint can be identified with a rational section of the canonical bundle on $\gr(k,k{+}2)$.
    Therefore, the degree of the adjoint is essentially determined by the sum of the degree of the canonical divisor of $\gr(k,k{+}2)$ and the degree of its algebraic boundary components. 
    In our case, the degree of the canonical divisors is $-k-2$, see \cite[Section 5.7.4.]{and_all_that}, and by Theorem \ref{thm:algebraic_boundary_of_A_infty} the degrees of the components of the algebraic boundary add up to $(k+1)+1=k+2$. 
    Therefore, we get that $\deg(\alpha(\aA^\infty_k))=k+2-k-2=0$. 
\end{remark}

Then, Theorem \ref{thm:res_arr_A_infty} has the following immediate consequence.

\begin{corollary}\label{cor:unique_adjoint}
    The up to multiplication with a constant unique adjoint $\alpha(\aA^\infty_k)$ interpolating the residual arrangement $\mathcal{R}(\aA^\infty_k)$ equals $1$. \qed
\end{corollary}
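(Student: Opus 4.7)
The plan is to combine the vanishing of the residual arrangement from Theorem \ref{thm:res_arr_A_infty} with the degree count recorded in the preceding remark. By Definition \ref{def:adjoint}, an adjoint $\alpha(\aA^\infty_k)$ is an element of the degree~$0$ part $R_0$ of the homogeneous coordinate ring of $\gr_\C(k,k{+}2)$ subject only to the constraint of vanishing on $\mathcal{R}(\aA^\infty_k)$. Since Theorem \ref{thm:res_arr_A_infty} tells us that $\mathcal{R}(\aA^\infty_k) = \varnothing$, this interpolation condition is vacuous, so the adjoint is any element of $R_0$ of the prescribed degree.

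First I would recall from the remark that the degree of $\alpha(\aA^\infty_k)$, computed from the canonical divisor of the Grassmannian together with the degrees of the two irreducible components of the algebraic boundary $\partial_a\aA^\infty_k$ identified in Theorem \ref{thm:algebraic_boundary_of_A_infty}, equals
\[
\deg(\alpha(\aA^\infty_k)) = -(k{+}2) + \big((k{+}1)+1\big) = 0.
\]
Then I would observe that the homogeneous degree~$0$ part of the coordinate ring of the irreducible projective variety $\gr_\C(k,k{+}2)$ is simply $R_0 = \C$, since $\gr_\C(k,k{+}2)$ is projective and connected, so the only globally regular functions on it are the constants. Hence $\alpha(\aA^\infty_k) \in R_0 = \C$, and up to scaling we may normalize it to $\alpha(\aA^\infty_k) = 1$.

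There is no real obstacle here beyond assembling these two inputs; the content of the corollary lies entirely in the earlier theorems, in particular in the emptiness of $\mathcal{R}(\aA^\infty_k)$ established by Lemmas \ref{lem:SingCH_dense}--\ref{lemma:intersection_strata_dense}, and in the matching of the degree of the canonical divisor with the degree of the algebraic boundary.
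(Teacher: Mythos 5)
The proposal is correct and takes essentially the same approach the paper intends: with $\mathcal{R}(\aA^\infty_k)=\varnothing$ by Theorem~\ref{thm:res_arr_A_infty} the interpolation condition is vacuous, and by the degree count in the remark (equivalently, since $R_0=\C$ by construction for the homogeneous coordinate ring of the projective variety $\gr_\C(k,k{+}2)$) the adjoint is a nonzero scalar, normalized to $1$. The paper leaves this as an immediate consequence; you simply spell out the same two inputs.
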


With the intention of proving the existence of a canonical form for $\aA^\infty_k$ we make the following observations.
\begin{proposition}\label{prop:1D_strata}
    Let $k\geq 3$. There are precisely $k$ boundary strata of $\aA^\infty_k$ of dimension $1$. They are $O_{0,k-1}^{k-1}$, $O_{1,{k-1}}^{k-1}$, and $O^k(i,j)$ for all $1\leq i,j\leq k-2$ with $i+j=k-1$.
\end{proposition}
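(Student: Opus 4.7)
The plan is to enumerate all candidate $1$-dimensional strata by systematically going through each type in Table~\ref{tab:strat_summary} and solving for codimension $2k-1$ in $\gr(k,k+2)$. The hyperplane $\CH(\Sec)$ has codimension $1$, and the strata $\secV^\ell(C_{k+1})$ and $\secV^\ell(C_{k+1})\cap\CH(\Sec)$ have codimensions $\ell$ and $\ell+1$ respectively, so codimension $2k-1$ would force $\ell\geq 2k-2$, which is incompatible with the standing bound $\ell\leq k$ once $k\geq 3$; these types contribute no $1$-dimensional strata.

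For $O^\ell_{0,j}$ (and symmetrically $O^\ell_{1,j}$), codimension $2k-1$ translates into $\ell+j=2k-2$ with $1\leq j\leq \ell\leq k$, leaving only the candidates $(\ell,j)=(k-1,k-1)$, which gives $O^{k-1}_{0,k-1}=O_{0,k-1}$, and $(\ell,j)=(k,k-2)$, which I handle below. For $O^\ell(i,j)$, codimension $\ell+i+j=2k-1$ together with $i,j\geq 1$ and $i+j\leq \ell\leq k$ forces $\ell=k$ and $i+j=k-1$, giving exactly the $k-2$ pairs $(i,j)$ with $1\leq i,j\leq k-2$ and $i+j=k-1$.

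I would then verify that each listed candidate is irreducible of dimension $1$ by explicit parametrization by $\mathbb{P}^1$. Under the isomorphism $\mathcal{V}_{\mathcal{O}^{(k-1)}}\cong \mathbb{P}^2$, the variety $O_{0,k-1}$ is cut out by the projective line $L_{0,k-1}/\mathcal{O}^{(k-1)}(\gamma_{k+1}(0))$, so $O_{0,k-1}\cong\mathbb{P}^1$, and symmetrically for $O_{1,k-1}$. Each $O^k(i,j)$ with $i+j=k-1$ is parametrized by $s\in\mathbb{P}^1$ via $s\mapsto \operatorname{span}(\mathcal{O}^{(i)}(\gamma_{k+1}(0)),\mathcal{O}^{(j)}(\gamma_{k+1}(1)),\gamma_{k+1}(s))$, where independence of the spanning set is guaranteed by the generalized Vandermonde determinant of Remark~\ref{rem:generalized_vandermond}.

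The main obstacle is disposing of the residual case $(\ell,j)=(k,k-2)$. Using the isomorphism $\overline{\pi}_0$ from the proof of Proposition~\ref{prop:sing_X_0l}, $O^k_{0,k-2}\cong \secV^2(C_3)\cap \CH(\overline{\Sec})$ inside $\gr(2,4)$, where $C_3$ is the projected twisted cubic and $\overline{\Sec}$ its distinguished secant. A Vandermonde computation shows that four distinct points on $C_3$ are never coplanar, so a $2$-secant meets $\overline{\Sec}$ only by passing through one of its two endpoints, splitting $\secV^2(C_3)\cap\CH(\overline{\Sec})$ into two conics. One conic pulls back to the already-listed $O^k(k-2,1)$, while the other lies in the higher-dimensional variety $\mathcal{V}_{\mathcal{O}^{(k-1)}}\cap\secV^k$ but does not arise as an irreducible component of any iterated singular locus governed by Propositions~\ref{prop:it_sing_locus_chow}--\ref{prop:sing_XL0^l}, and hence is not a stratum of the stratification of Section~\ref{sec:strata}. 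A symmetric argument disposes of $O^k_{1,k-2}$; summing gives $1+1+(k-2)=k$ one-dimensional strata, matching the statement.
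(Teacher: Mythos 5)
Your enumeration strategy (list all strata types from Table~\ref{tab:strat_summary}, solve for codimension $2k-1$, then inspect each candidate) is genuinely more systematic than the paper's own argument, which simply notes that $\secV^\ell(C_{k+1})$ and $\secV^\ell(C_{k+1})\cap\CH(\Sec)$ are too shallow and then verifies that the listed strata have dimension one, without any attempt to rule out other types. You correctly identify the residual candidate $O^k_{0,k-2}$ (and its mirror), which the paper never mentions, and your reduction via $\overline{\pi}_0$ to $\secV^2(C_3)\cap\CH(\overline{\Sec})$, the Vandermonde/degree argument forcing a $2$-secant meeting $\overline{\Sec}$ to pass through an endpoint, and the identification of one conic's pullback with $O^k(k-2,1)$ are all sound. (A small slip: $\mathcal{V}_{\mathcal{O}^{(k-1)}}\cap\secV^k$ is not a higher-dimensional variety containing the other conic -- it \emph{is} the other conic.)

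The genuine gap is the last sentence disposing of the remaining conic. You assert that it ``does not arise as an irreducible component of any iterated singular locus governed by Propositions~\ref{prop:it_sing_locus_chow}--\ref{prop:sing_XL0^l}, and hence is not a stratum,'' but this is exactly what needs to be proved, and it is not obviously true: Proposition~\ref{prop:sing_X_0l} with $(\ell,j)=(k-1,k-2)$ asserts $O^k_{0,k-2}\subset\Sing(O^{k-1}_{0,k-2})$, which would make both irreducible components of $O^k_{0,k-2}$ (your extra conic included) into strata under a naive reading of the iterated-singular-locus stratification. Resolving this requires either showing that $O^{k-1}_{0,k-2}$ is itself reducible (it is -- it equals $\CH(C_3)\cap\CH(\overline{\Sec})$ under $\overline{\pi}_0$, with components corresponding to the $S_0, S_1, S_{01}$ of Table~\ref{tab:alg_bd_strata}) and then tracing carefully which singular loci of which components are actually reached, or explaining why the conic $\mathcal{V}_{\mathcal{O}^{(k-1)}}\cap\secV^k$ should be excluded from the stratification on principled grounds. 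Neither you nor the paper does this, so the proposition's enumeration is not fully justified in either version; but your proof at least exposes the hole instead of stepping over it.
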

\begin{proof}
    First notice that $\secV^\ell(C_{k+1})$ is of codimension $k$ at most, and intersecting with $\CH(\Sec)$ only increases the codimension by $1$. Therefore, all higher codimension parts come from intersections with $O_{0,i}^j$ and $O_{1,i}^j$ for some suitable $i$ and $j$.
    Now it is easy to see that both $O_{0,k-1}^{k-1}$ and $O_{1,k-1}^{k-1}$ are two distinct $1$-dimensional strata of $\partial_a\aA^\infty_k$, as they are containing the $(k{-}1)$-dimensional osculant plane at $\gamma_{k+1}(0)$ and $\gamma_{k+1}(1)$, respectively while also need to intersect $C_{k+1}$ in an additional point.
    Fix $1\leq\ell\leq k-1$. Then, the $k$-spaces in the intersection $O^\ell(i,j)=O^k_{0,i}\cap O^k_{1,j}$ are precisely those which contain the two osculant planes $\mathcal{O}^{(i)}(\gamma_{k+1}(0))$ and $\mathcal{O}^{(j)}(\gamma_{k+1}(1))$ and additionally meet $C_{k+1}$ in another point. 
    Notice, that the additional condition of meeting the lines $L_{0,i}$ and $L_{1,j}$ is implied as $\gamma_{k+1}(1)$ is contained in the former and $\gamma_{k+1}(0)$ in the latter.
\end{proof}

\begin{lemma}\label{lem:1D_skeleton_connected}
    The union of the $1$-dimensional strata of the algebraic boundary $\partial_a\aA^\infty_k$ is connected.
\end{lemma}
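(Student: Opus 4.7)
The plan is to exhibit each of the $k$ one-dimensional strata listed in Proposition \ref{prop:1D_strata} as an irreducible curve and then to chain them via common points. Each such stratum is readily seen to be a $\PP^1$: projecting away from the fixed $(k{-}1)$-dimensional osculant plane identifies $O_{0,k-1}^{k-1}$ (respectively $O_{1,k-1}^{k-1}$) with the pencil of $1$-dimensional subspaces of the $2$-dimensional image of $L_{0,k-1}$ (respectively $L_{1,k-1}$) in the quotient $\C^3$; while for $1 \leq i \leq k-2$, projecting away from the $(k{-}1)$-dimensional span of the two contained osculant planes identifies $O^k(i, k{-}1{-}i)$ with the image of $C_{k+1}$, which is an irreducible conic in $\PP^2$.

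Next, for $a = 0, 1, \ldots, k$ introduce the points
\[
V(a) \;=\; \mathcal{O}^{(a)}(\gamma_{k+1}(0)) \,+\, \mathcal{O}^{(k-a)}(\gamma_{k+1}(1)) \;\in\; \gr(k, k{+}2),
\]
which are $k$-dimensional subspaces of $\C^{k+2}$ by the generalized Vandermonde identity from Remark \ref{rem:generalized_vandermond}. I would verify directly from the defining conditions of the strata that $V(k) = \mathcal{O}^{(k)}(\gamma_{k+1}(0))$ and $V(k{-}1)$ both lie in $O_{0,k-1}^{k-1}$; that $V(i{+}1)$ and $V(i)$ both lie in $O^k(i, k{-}1{-}i)$ for each $1 \leq i \leq k{-}2$; and symmetrically that $V(1)$ and $V(0) = \mathcal{O}^{(k)}(\gamma_{k+1}(1))$ lie in $O_{1,k-1}^{k-1}$. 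In each case the required osculant containments follow from $\mathcal{O}^{(m-1)} \subset \mathcal{O}^{(m)}$, the intersection with $L_{0,k-1}$ or $L_{1,k-1}$ is witnessed by the presence of $\gamma_{k+1}(1)$ or $\gamma_{k+1}(0)$, and the divisor $D_{V(a)} = a\,\gamma_{k+1}(0) + (k{-}a)\,\gamma_{k+1}(1)$ has total degree exactly $k$, as needed.

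Chaining the shared points $V(k{-}1), V(k{-}2), \ldots, V(1)$ through the sequence of strata
\[
O_{0,k-1}^{k-1},\; O^k(k{-}2, 1),\; O^k(k{-}3, 2),\; \ldots,\; O^k(1, k{-}2),\; O_{1,k-1}^{k-1}
\]
produces a connected path visiting every one-dimensional stratum: each stratum is irreducible (hence connected) by the first paragraph, and consecutive strata in this list meet in the appropriate $V(a)$. Therefore their union is connected. The only point requiring care is the verification that each $V(a)$ genuinely satisfies the algebraic conditions cutting out the two strata it is claimed to bridge, which is a direct calculation of osculant containments and divisor degrees; no nontrivial deformation argument is needed because the candidate points are explicit $k$-planes.
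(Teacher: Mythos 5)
Your proof takes essentially the same route as the paper: identify the unique $k$-planes $\mathcal{O}^{(a)}(\gamma_{k+1}(0)) + \mathcal{O}^{(k-a)}(\gamma_{k+1}(1))$ and use them as bridges between consecutive one-dimensional strata. Your version is if anything cleaner than the paper's, since the explicit parameterization by $a$ avoids the off-by-one ambiguity in the paper's indexing, and you explicitly supply the irreducibility of each stratum (as a $\PP^1$ or a conic, which matches what Proposition~\ref{prop:sing_XL0^l} gives via $O^k(i,j)\cong\secV^1(C_2)$), whereas the paper leaves connectedness of the individual strata implicit. The only omission is that your argument relies on Proposition~\ref{prop:1D_strata}, which is stated under the hypothesis $k\geq 3$; the paper disposes of $k=1$ and $k=2$ separately by pointing to Examples~\ref{ex:pizza_slice} and~\ref{ex:twisted_cubic}, so you would need the same short remark to cover those cases.
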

\begin{proof}
    For $k=1$, the claim is discussed in Example \ref{ex:pizza_slice}. 
    For $k=2$, the claim follows from Table \ref{tab:alg_bd_strata} in Example \ref{ex:twisted_cubic}, as all $1$-dimensional strata contain the point $\Sec$.
    For $k>2$ we argues as follows. Recall the strata from Proposition \ref{prop:1D_strata}
    \[
    O^k(i,j)=O^k_{0,i}\cap O^k_{1,j},
    \]
    for all $ 1\leq i,j\leq k-1$ with $i+j=k-1$.
    Then, simply observe that for each $i$ and $j$, there is a unique $k$-space containing $\mathcal{O}^{(i)}(\gamma_{k+1}(0))$ and $\mathcal{O}^{(j)}(\gamma_{k+1}(1))$, which is contained in the two $1$-dimensional strata $O^k(i{-}1,j)=O_{0,{i-1}}^k\cap O^k_{1,j}$ and $O^k(i,j{-}1)=O^k_{0,i}\cap O_{1,{j-1}}^k$.
    This is because the strata are Zariski closed so we can assume the additional point on $C_{k+1}$ to become the $i$-th i.e. $j$-th derivative of $\gamma_{k+1}$ at either $0$ or $1$, respectively.
    Finally, for $O_{0,k-1}^{k-1}$ notice that by the same reasoning as above, it contains the $k$-space spanned by $\mathcal{O}^{(k-1)}(\gamma_{k+1}(0))$ and $\mathcal{O}^{(1)}(\gamma_{k+1}(1))=\gamma_{k+1}(1)$, which is also contained in the $1$-dimensional stratum $O_{0,{k-2}}^k\cap O_{1,{1}}^k$.
    Analogous reasoning for $O_{1,k-1}^{k-1}$ proves the claim.
\end{proof}

We can now prove the following version of our main result Theorem \ref{thm:main_pos_geom}.

\begin{theorem}\label{thm:A_infty_pos_geom}
    The limit amplituhedron $(\gr(k,k{+2}),\aA^\infty_k)$ is a positive geometry. 
    In particular, its up to scaling unique canonical form $\Omega(\aA^\infty_k)$ can be expressed via the Chow forms of the rational normal curve $C_{k+1}$ and its secant line $\Sec$ as  
    \begin{align}\label{eq:can_form}
        \Omega(\aA^\infty_k)=\frac{1}{\operatorname{CF}(C_{k+1})\operatorname{CF}(\Sec)}dx_1\wedge\dots\wedge dx_{2k},
    \end{align}
    where the $x_i$ are local coordinates of the Grassmannian $\gr(k,k{+}2)$ away from the hyperplane $\CH(\Sec)$.
\end{theorem}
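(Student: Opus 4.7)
The plan is to obtain the canonical form from the adjoint construction and then verify the recursive axioms of a positive geometry. The candidate form in \eqref{eq:can_form} is essentially forced: by Corollary~\ref{cor:unique_adjoint} the adjoint is the constant $1$, the anticanonical divisor on $\gr(k,k{+}2)$ has degree $k{+}2$, and $\deg\operatorname{CF}(C_{k+1})+\deg\operatorname{CF}(\Sec)=(k{+}1)+1=k{+}2$. So $\Omega(\aA^\infty_k)$ is a rational section of the canonical bundle with at worst simple poles along the two irreducible components of $\partial_a\aA^\infty_k$ identified in Theorem~\ref{thm:algebraic_boundary_of_A_infty} and no other poles. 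My first step is to record this degree/pole bookkeeping carefully and to globalise the local formula from the affine chart away from $\CH(\Sec)$ to all of $\gr(k,k{+}2)$.

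The main argument would proceed by strong induction on $k$, unfolding the recursive definition of positive geometry. The base case $k{=}1$ is the pizza slice of Example~\ref{ex:pizza_slice} and is treated in \cite[Example 6.1]{canForms_Lam_Hamed}. For the inductive step, I would compute the Poincaré residue of $\Omega(\aA^\infty_k)$ along each of $\CH(C_{k+1})$ and $\CH(\Sec)$ and check that it agrees with the canonical form of the boundary positive geometry supported on that component. Here Lemmas~\ref{lem:SingCH_dense}--\ref{lemma:intersection_strata_dense} are crucial: they identify the Euclidean boundary of $\aA^\infty_k\cap \CH(C_{k+1})$ and $\aA^\infty_k\cap \CH(\Sec)$ with the strata classified in Section~\ref{sec:stratification}, so that the residue forms have exactly the expected pole loci. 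Moreover, the residual arrangement of each boundary piece is again empty, so the adjoints occurring at each stage of the recursion are constants and the inductive identification of residues with the candidate boundary canonical forms reduces to matching degrees and poles.

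The base of the recursion is a check that at each $0$-dimensional stratum of $\partial_a\aA^\infty_k$ the iterated residue of $\Omega(\aA^\infty_k)$ equals $\pm 1$. By the analysis of Section~\ref{sec:stratification}, generalising the pattern visible in Example~\ref{ex:twisted_cubic}, these vertices are the secant line $\Sec$, the tangent/osculating configurations $\mathbf{T}_{\gamma_{k+1}(0)}(C_{k+1})$ and $\mathbf{T}_{\gamma_{k+1}(1)}(C_{k+1})$, and the Schubert-type planes spanned by osculating planes at the two endpoints. At such a point the iterated residue is purely combinatorial and can be evaluated in a local transversal chart of the nested strata.

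I expect the main obstacle to be verifying sign consistency of these iterated residues across distinct chains of boundary components, which is exactly what the recursive definition demands. This is where Lemma~\ref{lem:1D_skeleton_connected} enters decisively: connectedness of the $1$-skeleton of the stratification allows one to propagate a single normalization from one vertex to any other along a path of $1$-dimensional strata, forcing the residues obtained along distinct descent chains to agree in sign. Once the base case and the signs are pinned down, uniqueness of the canonical form of a positive geometry identifies the form obtained from the recursion with the expression \eqref{eq:can_form}, completing both claims of the theorem.
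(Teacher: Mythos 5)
Your proposal relies on the same key ingredients as the paper's proof (Corollary~\ref{cor:unique_adjoint} to pin down the numerator, Theorem~\ref{thm:algebraic_boundary_of_A_infty} to pin down the denominator, Lemmas~\ref{lem:SingCH_dense}--\ref{lemma:intersection_strata_dense} to rule out spurious poles, Lemma~\ref{lem:1D_skeleton_connected} to propagate sign choices), but the framing differs from what the paper actually does. The paper does \emph{not} run an induction on $k$: it argues top-down that the form is forced up to a scalar, observes that the restriction to any $1$-dimensional stratum is a nonzero $1$-form with simple poles at its two vertices, fixes the scalar so the residues at the endpoints of one such stratum are $\pm 1$, and then appeals to connectedness of the $1$-skeleton to propagate this orientation consistently to all other vertices. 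Your ``strong induction on $k$'' is a reasonable-sounding but not quite accurate framing: the residue of $\Omega(\aA^\infty_k)$ along $\CH(C_{k+1})$ or $\CH(\Sec)$ lives on a variety that is not $\gr(k{-}1,k{+}1)$ and its support is not $\aA^\infty_{k-1}$; the recursion implicit in the positive-geometry definition is a recursion by \emph{dimension} through the stratification of Section~\ref{sec:strata}, whose self-similar structure comes from the projections $\overline{\pi}$ in Propositions~\ref{prop:sing_X_0l}--\ref{prop:sing_XL0^l}, not from lowering $k$. If you insist on an inductive argument, you would also need to identify the residue positive geometries explicitly and justify your unproved assertion that their residual arrangements are empty --- this is not a direct corollary of Theorem~\ref{thm:res_arr_A_infty} but does follow from the density Lemmas~\ref{lem:SingCH_dense}--\ref{lemma:intersection_strata_dense}; make that step explicit. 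Where you agree with the paper, and correctly identify the crux, is that Lemma~\ref{lem:1D_skeleton_connected} is what forces sign consistency across distinct descent chains; that part of your write-up matches the paper's argument essentially verbatim.
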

\begin{proof}
    By Corollary \ref{cor:unique_adjoint} the numerator of a canonical form, if it exists, must be a constant.
    Similarly, by Theorem \ref{thm:algebraic_boundary_of_A_infty} the denominator of the canonical form is fixed, by demanding that it has simple poles along the boundary.
    In combination these observations imply that any contender for a canonical form must be non-zero multiple of $\Omega(\aA^\infty_k)$ as in \eqref{eq:can_form}, thus if such a form exists it is unique up to scaling.
    So it remains to check whether there is a way to orient the boundary such that taking successive residues of $\Omega(\aA^\infty_k)$ along faces of $\aA^\infty_k$ results in $\pm1$ at the vertices.
    Choosing an orientation on the algebraic boundary $\partial_a\aA^\infty_k$ induces an orientation on the flag of the strata containing a vertex of the residual arrangement. 
    The canonical form of $\aA^\infty_k$ restricted to a $1$ dimensional stratum must be a non-zero $1$-form with simple poles along the vertices.
    Therefore, we can choose the scalar in the numerator of $\Omega(\aA^\infty_k)$ in a way which makes the residue equal to $+1$ at one vertex and $-1$ at the other vertex of that $1$-dimensional stratum.
    By Lemma \ref{lem:1D_skeleton_connected} we know that the union of all $1$-dimensional strata is connected, thus this gives a compatible way of assigning the orientations everywhere.
    This shows existence and thus uniqueness of the canonical form for $\aA^\infty_k$, and therefore that $(\gr(k,k{+2}),\aA^\infty_k)$ is a positive geometry.
\end{proof}


\printbibliography

\noindent{\bf Authors' addresses:}
\medskip

\noindent Joris Koefler, MPI-MiS Leipzig
\hfill {\tt joris.koefler@mis.mpg.de}

\noindent Rainer Sinn, University of Leipzig
\hfill {\tt rainer.sinn@uni-leipzig.de}
\end{document}